\documentclass[12pt]{amsart}
\usepackage{amsmath,amssymb,amsfonts,amsthm,latexsym,graphicx,multirow,enumerate}
\allowdisplaybreaks
\usepackage{tikz}
\usepackage[utf8]{inputenc}
\usepackage[backref]{hyperref}

\usetikzlibrary{calc}
  \newcommand\Cay{\mathrm{Cay}}

\newcommand\K{\mathrm{K}}

\newtheorem{theorem}{Theorem}[section]
\newtheorem{lemma}[theorem]{Lemma}

\theoremstyle{definition}

\usepackage{color}

\definecolor{Blue}{rgb}{0,0,1}
\definecolor{Red}{rgb}{1,0,0}
\definecolor{DarkGreen}{rgb}{0,0.6,0}
\definecolor{DarkYellow}{rgb}{1,1,0.2}
\definecolor{DarkPurple}{rgb}{.6,0,1}

\usepackage{xcolor}
\usepackage[normalem]{ulem}

\begin{document}
	
	\title{ Classification of two-distance-transitive Cayley graphs of  the semi-dihedral groups}

\thanks{Supported by  NSFC (12271524,12331013) and NSF of Hunan (2026JJ50358) and Scientific Research Project of  the Education Department of Hunan Province (24A0142)}

\author[W. Jin]{Wei Jin}
 \address{Wei Jin\\School of Mathematics and Computational Science, Key Laboratory of Intelligent Computing and Information Processing of Ministry of Education\\
Xiangtan University\\
Xiangtan, Hunan, 411105, P.R.China}
\email{jinweipei82@163.com}
\author[C. X. Li]{Cai Xia  Li}
\address{Cai Xia  Li (Corresponding author.)\\School of Mathematics and Computational Science, Key Laboratory of Intelligent Computing and Information Processing of Ministry of Education\\
Xiangtan University\\
Xiangtan, Hunan, 411105, P.R.China}
\email{caixial@yeah.net}
\author[P. S. Li]{Ping Shan Li}
\address{Ping Shan Li\\School of Mathematics and Computational Science, Key Laboratory of Intelligent Computing and Information Processing of Ministry of Education\\
Xiangtan University\\
Xiangtan, Hunan, 411105, P.R.China}
\email{lips@xtu.edu.cn}

 	\begin{abstract}

The class of 2-distance-transitive graphs naturally generalizes distance-transitive graphs and plays a central role in algebraic graph theory. Classifying such graphs for a prescribed underlying group is a key open problem.
A vertex-transitive graph $\Gamma$ is said to be $2$-distance-transitive if, for each $i\in \{1,2\}$, any two  pairs of vertices  with identical distance $i$ in $\Gamma$ can be mapped to each other via some  automorphism of the graph. In this paper, we present a complete classification of  all  $2$-distance-transitive Cayley graphs of the  semi-dihedral groups.
		
	\textit{Key words:} Cayley graph, $2$-distance-transitive graph, semi-dihedral group.
	\end{abstract}

	\maketitle
	\section{Introduction}

In this paper, all graphs considered are finite, simple, connected and undirected. For a graph $\Gamma$, we denote by   $V(\Gamma)$, $A(\Gamma)$ and $\operatorname{Aut}(\Gamma)$ its vertex set, arc set and automorphism group, respectively. An \textit{arc} of a graph is  an ordered pair of adjacent vertices. Let $u$ and $v$ be two distinct vertices of $\Gamma$. The \textit{distance} from $u$ to $v$, denoted by $d_{\Gamma}(u,v)$, is the smallest positive integer $n$ for which there exists a path of length $n$ between $u$ and $v$. The \textit{diameter} $\operatorname{diam}(\Gamma)$ of $\Gamma$ is the maximum distance between any two vertices in $V(\Gamma)$.

Let $\Gamma$ be a graph and  $G \le \operatorname{Aut}(\Gamma)$. If $G$ acts transitively on $V(\Gamma)$ or $A(\Gamma)$, then $\Gamma$ is called \textit{$G$-vertex-transitive} or \textit{$G$-arc-transitive}, respectively.
A vertex triple $(u,v,w)$  in which  $v$ is adjacent to both $u$
and $w$ is called a \emph{$2$-arc} if $u\neq w$. We say that $\Gamma$ is   \emph{$(G,2)$-arc-transitive} if, for each $i\leq 2$, the group
$G$ is transitive on the set of  $i$-arcs of $\Gamma$.
In particular,  $\Gamma$ is simply said to be \textit{arc-transitive} or \textit{$2$-arc-transitive},  if it is $\operatorname{Aut}(\Gamma)$-arc-transitive or $(\operatorname{Aut}(\Gamma),2)$-arc-transitive, respectively.
The study of  2-arc-transitive graphs traces its origins to Tutte's foundational work \cite{Tutte-1,Tutte-2}. Since then, this graph family has been extensively investigated, see for instance  \cite{DuMarusicWaller1998,Du2008,Jin2023,LiPan2008,LiSeressSong2015}.

A non-complete $G$-arc-transitive graph $\Gamma$ is said to be \textit{$(G,2)$-distance-transitive} if for any two distinct vertex pairs $(u_1,v_1)$ and $(u_2,v_2)$ satisfying $d_{\Gamma}(u_1,v_1) = d_{\Gamma}(u_2,v_2) = 2$, some element of $G$ maps $(u_1,v_1)$ to $(u_2,v_2)$. When $G = \operatorname{Aut}(\Gamma)$,  $\Gamma$ is called  \textit{$2$-distance-transitive}. We say that $\Gamma$ is \textit{$G$-distance-transitive} if for every integer $i \leq \operatorname{diam}(\Gamma)$, $G$ is transitive on the set of ordered vertex pairs at distance $i$. Accordingly, a graph is \textit{distance-transitive} if it is $\operatorname{Aut}(\Gamma)$-distance-transitive.

By definition, every non-complete $2$-arc-transitive graph is $2$-distance-transitive, but the converse is not necessarily true. If a $2$-distance-transitive graph has girth $3$ (the length of its shortest cycle is $3$), then the graph is not $2$-arc-transitive.  Thus the family of $2$-distance-transitive graphs contains both the family of distance-transitive graphs and the family of $2$-arc-transitive graphs.

Research on (locally) $2$-distance-transitive graphs has developed rapidly in recent years. Devillers, Giudici, Li and Praeger \cite{DevillersGiudiciLiPraeger2012} investigated locally $s$-distance-transitive graphs via the normal quotient method originally proposed for $s$-arc-transitive graphs in \cite{Praeger1993}. Corr, Schneider and the first author \cite{CorrJinSchneider2017} considered $2$-distance-transitive graphs of girth $4$, and classified all such graphs with valency at most $5$ that are $2$-distance-transitive but not $2$-arc-transitive. Later, $2$-distance-transitive Cayley graphs on cyclic groups (circulant graphs) were fully determined   in \cite{ChenJinLi2019}. Most recently, the complete classification of $2$-distance-transitive Cayley graphs over dihedral groups was achieved in \cite{HuangFengZhouYin2025, JinTan2022}.

A \textit{semi-dihedral group} $\mathrm{SD}_{8m}$ of order $8m$ is a non-abelian  group  defined by the following presentation:
\[
\mathrm{SD}_{8m} = \langle a, b \mid a^{4m} = b^2 = 1,\ a^b = a^{2m-1} \rangle,\ m \geq 3.
\]

The semi-dihedral group \(\mathrm{SD}_{8m}\) occupies a distinctive and increasingly significant position at the interface between group theory and algebraic graph theory, especially in the study of Cayley graphs. Its importance arises both from its elegant algebraic structure and from its capacity to generate a wealth of highly symmetric graphs.

The classification program for 2-distance-transitive Cayley graphs was initiated with cyclic groups (circulants) and subsequently extended to dihedral groups. The semi-dihedral group represents the immediate and natural successor in this hierarchical progression.

In this paper, we classify $2$-distance-transitive Cayley graphs on semi-dihedral groups. Our main result is the following:


\begin{theorem}\label{thm1.1}
Let $\Gamma$ be a connected $2$-distance-transitive Cayley graph on the semi-dihedral group $T = SD_{8m} = \langle a, b \mid a^{4m} = 1, \, b^2 = 1, \, a^b = a^{2m-1} \rangle$, where $m \geq 3$ and \( T  \leq \text{Aut}(\Gamma) \). Then $\Gamma$ is isomorphic to one of the following graphs:
\begin{itemize}
    \item [(1)] $\K_{x[y]}$ for some  $x\geq 3$ and $y\geq 2$ with $xy = 8m$;
    \item [(2)] \( \K_{4m,4m} \) where \( m \geq 3 \);
    \item [(3)] \( \K_{4m,4m} - 4mK_2 \) where \( m \geq 3 \);
    \item [(4)] \( B(\operatorname{PG}(d-1,q)) \) and \( B'(\operatorname{PG}(d-1,q)) \), where \( d \geq 3 \), \( q \) is a prime power with $\frac{q^{d}-1}{q-1} = 4m$;
    \item [(5)] \(X_1(4, q)\) where \(q \equiv 3 \pmod{4}\) and \(q = 2m - 1\);
    \item [(6)] \( \K_{q+1}^{2d} \) for some \( d \geq 2 \) dividing \( q-1 \), where \( q \) is an odd prime power, with $d(1+q) = 4m$ and  \( q+1 \ge 6\);
    \item [(7)] \( \Gamma(d,q,r) \), where \( d \geq 2 \), \( q \) is a prime power and \( r \mid q-1 \) with $r\frac{q^d-1}{q-1}  = 4m$;

\end{itemize}
	  \end{theorem}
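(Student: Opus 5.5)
We will follow the strategy used for circulants and dihedral groups: normal quotient reduction plus known classifications of $2$-distance-transitive graphs with a quasiprimitive or biquasiprimitive group. Put $A=\operatorname{Aut}(\Gamma)$; since $\Gamma$ is $2$-distance-transitive it is $A$-arc-transitive, and $T=\mathrm{SD}_{8m}\le A$ is regular. First we dispose of the small-diameter and complete multipartite cases. If $\operatorname{diam}(\Gamma)=2$ and $\Gamma$ has girth $3$ with the stabiliser $A_v$ transitive on $\Gamma_2(v)$, the complement is a disjoint union of complete graphs exactly when non-adjacency is an equivalence relation. Using the standard lemma that a $2$-distance-transitive graph with $c_2=|\Gamma(v)|$ (every vertex at distance $2$ from $v$ has the same neighbourhood as $v$) is $\K_{x[y]}$, we obtain case (1). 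Here $xy=8m$, $x\ge3$, $y\ge 2$, and each such graph is a Cayley graph on $T$ via a subgroup of order $y$. The bipartite analogue gives $\K_{4m,4m}$ and $\K_{4m,4m}-4mK_2$. These arise as Cayley graphs on $T$ with connection set in the coset $T\setminus\langle a^2,b\rangle$ or $T\setminus\langle a\rangle$ of an index-$2$ subgroup, so this yields (2) and (3). From now on we assume $\Gamma$ is none of these.

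Next, we take a maximal intransitive normal subgroup $N\lhd A$ and pass to the normal quotient $\Gamma_N$. By the Devillers–Giudici–Li–Praeger result cited above, if $N$ has at least $3$ orbits then $\Gamma$ is a normal cover of $\Gamma_N$, which is again $(A/N,2)$-distance-transitive or complete. The group $TN/N$ is a quotient of $T$, hence cyclic, dihedral or semi-dihedral, and it is vertex-transitive (regular when $N\cap T$ acts semiregularly). We then use the previously known classifications of $2$-distance-transitive circulants \cite{ChenJinLi2019} and dihedrants \cite{HuangFengZhouYin2025,JinTan2022} on the quotient. The key step is to show the covering must be trivial in the non-bipartite case, i.e. $A$ is quasiprimitive on vertices, or $N$ has $2$ orbits, i.e. $A$ is biquasiprimitive. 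To do this we use that a cover of a $2$-distance-transitive graph with girth $\ge 5$ forces local $2$-arc-transitivity. We also use the constraint that the fibre group $N\cap T$ is a normal subgroup of $\mathrm{SD}_{8m}$, hence either a subgroup of $\langle a\rangle$ or one of $\langle a^2,b\rangle$, $\langle a^2,ab\rangle$. Comparing valencies and fibre sizes then eliminates nontrivial covers except the bipartite ones already listed.

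In the quasiprimitive case, we use Praeger's O'Nan–Scott type theorem for quasiprimitive groups together with Li's classification of quasiprimitive groups containing a regular subgroup that is dihedral-like (in particular, the socle must be almost simple, since $T$ is not of product type and has a cyclic subgroup of index $2$). This gives $\operatorname{soc}(A)$ as $A_n$ or $\mathrm{PSL}(d,q)$ acting on points/cosets. The $2$-distance-transitive graphs arising then are complete or $\K_{x[y]}$, so quasiprimitive nonbipartite cases contribute nothing new. In the biquasiprimitive case, the index-$2$ subgroup $A^+$ fixing both parts has a regular action on each part by an index-$2$ subgroup of $T$: $\mathbb{Z}_{4m}$, $D_{4m}$ or $Q_{4m}$. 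Using the classification of $2$-transitive groups of degree $4m$ containing a cyclic or dihedral regular subgroup, we get $\mathrm{PGL}(d,q)$ with $(q^d-1)/(q-1)=4m$. These give $B(\operatorname{PG}(d-1,q))$ and $B'(\operatorname{PG}(d-1,q))$, case (4); $\mathrm{PSL}(2,q)$ on $q+1=2m$ points, giving (5) with $q=2m-1$ and $q\equiv3\pmod 4$ forced by the semi-dihedral relation; and the imprimitive cases giving the covers $\K_{q+1}^{2d}$ and $\Gamma(d,q,r)$ of (6) and (7), with the stated divisibility.

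The main obstacle is the last step: for each candidate graph we must decide whether it genuinely admits $\mathrm{SD}_{8m}$ as a regular subgroup, and not merely $D_{8m}$ or $\mathbb{Z}_{8m}$. For this we will write explicit connection sets, e.g. realise $\operatorname{PG}(d-1,q)$ points as $\mathbb{Z}_{4m}=\langle a\rangle$ via a Singer cycle and let $b$ act as a field-automorphism-type polarity swapping parts. We will then verify the relation $a^b=a^{2m-1}$, which imposes the arithmetic restrictions (for instance $q\equiv 3\pmod4$ in (5)). Conversely, we must exclude candidates where no involution of $A$ outside $A^+$ inverts the Singer cycle in the required semi-dihedral way. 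We expect this case-by-case check to be the delicate part.
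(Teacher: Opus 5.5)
\textbf{There is a genuine gap in how you treat the bipartite cover case.}

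You propose that a normal subgroup $N$ with at least three orbits can only lead back to the bipartite graphs (2)--(3), by comparing valencies and fibre sizes. So otherwise $A$ is biquasiprimitive, and you then try to obtain (5)--(7) from $2$-transitive actions of $A^+$ of degree $4m$ on the bipartite halves. This cannot work.

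\begin{itemize}
\item The graphs $X_1(4,q)$, $\K_{q+1}^{2d}$ and $\Gamma(d,q,r)$ with $r\ge 2$ are regular cyclic covers of $\K_{q+1}$, $\K_{q+1,q+1}-(q+1)\K_2$ and $B'(\operatorname{PG}(d-1,q))$ respectively. For $r=1$, $\Gamma(d,q,r)$ is just $B'(\operatorname{PG}(d-1,q))$.
\item In each of these graphs, some fibre contains two distinct vertices of the same bipartite half. By the cover property they have no common neighbour, so they are at distance at least $4$.
\item Hence $A^+$ is not $2$-transitive on a half, and these families never appear in your biquasiprimitive analysis.
\end{itemize}

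Your attribution of (5) also has a degree mismatch. $\operatorname{PSL}(2,q)$ acts on $q+1=2m$ points, which are the fibres of $X_1(4,q)\to\K_{q+1}$, not a half of size $4m$. Even in a genuinely biquasiprimitive case, $2$-transitivity of $A^+$ on the halves is not automatic. The paper instead quotes the classification of arc-transitive bicirculants with a biquasiprimitive group.

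\textbf{What is missing is an analysis of the case you propose to eliminate.} This is exactly where the paper finds (5)--(7). The steps needed are:
\begin{enumerate}
\item[(i)] When $\Gamma_N$ is non-complete, show $N\le\langle a\rangle$. Otherwise $\Gamma_N$ is a $2$-distance-transitive circulant, which is excluded by the Chen--Jin--Li classification, the lemma that $\Gamma_N\not\cong\K_{m[b]}$ when $\Gamma$ is not complete multipartite, and the exclusion of Paley graphs. Then $T/N$ is $\mathbb{Z}_2^2$, $D_{2i}$, $\mathbb{Z}_4\times\mathbb{Z}_2$ or $\mathrm{SD}_{2i}$.
\item[(ii)] Identify the base graph $\Gamma_N$. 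It is either complete with $G/N$ $2$-transitive, or a $2$-arc-transitive dihedrant or biquasiprimitive $\mathrm{SD}_{2i}$-Cayley graph such as $\K_{i,i}$, $\K_{i,i}-i\K_2$, $B(\operatorname{PG}(d-1,q))$ or $B'(\operatorname{PG}(d-1,q))$.
\item[(iii)] Transfer $2$-arc-transitivity between $\Gamma$ and $\Gamma_N$ by the Li--Pan lemma. A complete quotient on which $G/N$ is $2$-transitive but not $3$-transitive must be treated separately.
\item[(iv)] Apply the classifications of $2$-arc-transitive regular cyclic covers of these base graphs. These are Du--Maru\v{s}i\v{c}--Waller for complete graphs, and Jin's lemmas on bicirculant covers of $\K_{n,n}$, $\K_{n,n}-n\K_2$, $B(\operatorname{PG})$, $B'(\operatorname{PG})$ and $B(H_{11})$.
\end{enumerate}

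Step (iv) is what actually produces $X_1(4,q)$, $\K_{q+1}^{2d}$ and $\Gamma(d,q,r)$ and rules out everything else. Comparing valencies and fibre sizes, or your girth-$\ge 5$ remark, does not replace it.
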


The graphs described in Theorem \ref{thm1.1} will be defined in the next section.

    \bigskip

This paper is organized as follows. After the introduction, Section~$2$ collects essential definitions from group and graph theory, as well as several elementary lemmas used throughout the paper. Theorem~\ref{thm1.1},   proved in Section 3, presents the  classification result for connected \(2\)-distance-transitive Cayley graphs of the   semi-dihedral groups.
The main method of this paper is based on Praeger's normal subgroup reduction method \cite{Praeger1993a,Praeger1993,Praeger1997}, in which the quotienting is always performed with respect to a normal subgroup. Therefore, to complete the classification carried out in this paper, we first investigate nontrivial normal subgroups of the semi-dihedral group \(T = \mathrm{SD}_{8m}\) and the structures of their quotient groups \(T/N\), this is done in Lemmas~\ref{normal group} and ~\ref{T/N}.
Let \(\Gamma\) be a connected \(2\)-distance-transitive Cayley graph on the semi-dihedral group
$T = \mathrm{SD}_{8m} = \langle a, b \mid a^{4m} = 1,\; b^2 = 1,\; a^b = a^{2m-1} \rangle$,
where \(T \leq G= \operatorname{Aut}(\Gamma)\) and \(m \geq 3\).
Assume further that \(\Gamma \ncong \K_{x[y]}\) for all \(x \ge 3\) and \(y \ge 2\).
Let \(N\) be a maximal normal subgroup of \(G\) having at least three orbits on \(V(\Gamma)\).
By Theorem~\ref{(G/N, s)-distance-transitive}, \(\Gamma\) is a cyclic or metacyclic cover of the quotient graph \(\Gamma_N\).
Moreover, \(\Gamma_N\) is either a complete graph or a non-complete \((G/N,2)\)-distance-transitive graph, and \(G/N\) acts quasiprimitively or bi-quasiprimitively on \(V(\Gamma_N)\).
Based on these facts, we classify all such base graphs and explicitly construct their cyclic and metacyclic covers.

\bigskip
\bigskip

	\section{Preliminaries}
	\label{sec2}

    In this section, we will give some definitions about groups and graphs that will be used
    in the paper. And we also present some relevant lemmas. For the group theoretic terminology not defined here we refer the reader to \cite{Cameron1999,Wielandt1964}.
    \medskip

\subsection{Groups and related results}

 Let $G$ be a permutation group on a set $\Omega$ and take $\alpha \in \Omega$. The \emph{stabilizer} of $\alpha$ in $G$, denoted $G_\alpha$, is the subgroup of $G$ consisting of all elements that fix $\alpha$. We denote by $G_\Delta$ and $G_{\{\Delta\}}$ the pointwise-stabilizer and setwise-stabilizer of $\Delta \subseteq \Omega$, respectively. We say that $G$ is \emph{semiregular} on $\Omega$ if $G_\alpha = 1$ for every $\alpha \in \Omega$, and \emph{regular} if it is both transitive and semiregular.

Let $\mathbb{Z}_n$ denote the cyclic group of order $n$.

A \textit{dihedral group} of order $2n$ is denoted by $D_{2n}$ and defined as $D_{2n} = \langle a, b \mid a^n = 1,\ b^2 = 1,\ bab = a^{-1} \rangle$.

A group $G$ is called \emph{metacyclic} if there is a normal subgroup $N$ of $G$ such that both $N$ and $G/N$ are cyclic.

Let $G$ be a transitive permutation group on a set $\Omega$ and let $\mathcal{B}$ be a $G$-invariant partition of $\Omega$. If the only possibilities for $\mathcal{B}$ are the partition into one part, or the partition into singletons then $G$ is called \textit{primitive}. The \textit{kernel} of $G$ on $\mathcal{B}$ is the normal subgroup of $G$ consisting of all elements that fix setwise each $B \in \mathcal{B}$. We call $\mathcal{B}$ \textit{maximal} if $G^{\mathcal{B}}$ is primitive on $\mathcal{B}$. The group $G$ is said to be \textit{quasiprimitive}, if every non-trivial normal subgroup of $G$ is transitive on $\Omega$, while $G$ is said to be \textit{bi-quasiprimitive} if every non-trivial normal subgroup of $G$ has at most two orbits on $\Omega$ and there exists one which has exactly two orbits on $\Omega$. Quasiprimitivity is a natural generalization of primitivity: while every normal subgroup of a primitive group is transitive, there exist quasiprimitive groups that are not primitive. For further background on quasiprimitive and bi-quasiprimitive permutation groups, we refer the reader to \cite{LiPanXia2021,Praeger1993}.

Given two groups $H$ and $L$, we write $H \times L$ for their direct product, $H:L$ for a semidirect product of $H$ by $L$, and $H.L$ for an extension of $H$ by $L$. For a group $G$, let $Z(G)$ denote its centre and $\operatorname{soc}(G)$ its socle, namely the product of all minimal normal subgroups of $G$.

A finite   abstract group $H$ is called a \textit{$B$-group}, if every permutation group $G$ containing  $H $ as a regular subgroup of $\Omega$  is either imprimitive or $2$-transitive on $\Omega$. It is known that all finite cyclic groups with composite order and all dihedral groups are  $B$-groups \cite{Wielandt1964}, and  the class of   $B$-groups also includes all dicyclic group   \cite{Wielandt1964}.

The following   lemma further establishes that  certain families of   abelian groups belong to the class of   $B$-groups.

\begin{lemma} [{\cite[Theorem 25.5]{Wielandt1964}}]\label{thm:25.5}
If \( H \) is an abelian group  of type \( (p^a, p^b) \) with \( a > b \), then \( H \) is a \( B \)-group.
\end{lemma}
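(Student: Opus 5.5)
The plan is to argue through Schur rings, following Schur's and Wielandt's method.

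\textbf{Reduction.} Let $G$ be a permutation group on $\Omega$ containing $H$ as a regular subgroup, and assume $G$ is primitive. Identify $\Omega$ with $H$ and let $1$ denote the point corresponding to the identity. Then the orbits of $G_1$ give the basic sets $T_0=\{1\},T_1,\dots,T_r$ of a Schur ring $\mathfrak S$ over $H$. Recall two standard facts:
\begin{itemize}
\item[(i)] $\mathfrak S$ is primitive, meaning it has no $\mathfrak S$-subgroup other than $1$ and $H$, because $G$ is primitive.
\item[(ii)] $G$ is $2$-transitive exactly when $r=1$, that is, when $H\setminus\{1\}$ is a single basic set.
\end{itemize}
So it suffices to show that a primitive Schur ring over $H\cong \mathbb Z_{p^a}\times\mathbb Z_{p^b}$ with $a>b$ has only one nontrivial basic set. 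When $b=0$ the group is cyclic of order $p^a$ with $a\ge 2$, and the claim is Wielandt's theorem for cyclic groups of composite order, so assume $b\ge 1$.

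\textbf{Main tool.} I will use the $p$-power lemma for Schur rings over abelian groups. For $\xi=\sum c_xx\in\mathfrak S$, set $\xi^{(m)}=\sum c_xx^m$. For $m$ coprime to $|H|$ we have $\xi^{(m)}\in\mathfrak S$. For $m=p$ we have $\xi^p\equiv\xi^{(p)}\pmod p$ in $\mathbb Z H$. Consequently, for each basic set $T$, the set of elements whose coefficient in $\underline{T}^{(p)}$ is not divisible by $p$ is a union of basic sets. The set $\{x: x^p\in\langle\text{that union}\rangle\}$ then gives a further $\mathfrak S$-subgroup.

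\textbf{Key steps.} I will iterate the map $x\mapsto x^p$, ideally up to $x\mapsto x^{p^{a-1}}$. Since $a>b$, the image $H^{p^{a-1}}$ is a characteristic cyclic subgroup $C$ of order $p$. The hope is that for $\xi\in\mathfrak S$, the element $\xi^{(p^{a-1})}$ lies in $\mathfrak S$ modulo $p$.
\begin{itemize}
\item[(1)] For each nontrivial basic set $T$, look at how $T$ is distributed among the fibres of the homomorphism $H\to C$. If the reduced element $\underline T^{(p^{a-1})}\bmod p$ had support meeting $C\setminus\{1\}$, it would be a nonzero element of $\mathfrak S$ (mod $p$) supported in $C$. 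The subgroup generated by its support is then a nontrivial $\mathfrak S$-subgroup contained in $C$. Since $C\ne H$ (because $b\ge1$), this contradicts primitivity. Hence every nontrivial basic set meets each nontrivial fibre in a number of elements divisible by $p$.
\item[(2)] Apply the same argument at the intermediate levels $p^k$ for $1\le k\le a-1$. Here $H^{p^k}$ is a proper nontrivial subgroup, and the condition $a>b$ is what guarantees that these images are not all of the form that a primitive ring could support. This yields congruence conditions on $|T\cap xK|$ for the relevant subgroups $K$ and cosets $xK$.
\item[(3)] Combine these congruences with $\sum_i|T_i|=|H|-1\equiv -1\pmod p$. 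The aim is to force a single nontrivial basic set, exactly as in Wielandt's proof of Theorem 25.4 for cyclic groups. In particular, a nontrivial basic set cannot consist entirely of elements with $p$-divisible fibre counts unless it is all of $H\setminus\{1\}$.
\end{itemize}

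\textbf{Main obstacle.} Step (3), the final counting, is where I expect the difficulty. The issue is making the mod-$p$ statements yield genuine $\mathfrak S$-subgroups, since the lemma only holds modulo $p$, and the asymmetry $a>b$ must be used exactly there. When $a=b$ the analogous statement fails: $\mathbb Z_p^2$ has primitive non-$2$-transitive overgroups. I would first try to mimic Wielandt's treatment of cyclic $p$-groups. That means replacing the unique subgroup of order $p$ by the characteristic subgroup $C=H^{p^{a-1}}$, and replacing the kernel of $x\mapsto x^p$ by the subgroup $\{x: x^{p^{a-1}}=1\}$ of index $p$.
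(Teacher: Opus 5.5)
The paper does not prove this lemma; it quotes it from Wielandt's book. Your Schur-ring route is the classical one, but what you have is a plan rather than a proof. Steps (2) and (3), which you flag yourself, carry all the content and are not supplied.

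Step (1) is fine. The iteration you ``hope'' for is automatic: Frobenius is a ring endomorphism of $\mathbb{F}_pH$, so $\xi^{(p^k)}\equiv\xi^{p^k}\pmod p$.

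The trouble is that Steps (2) and (3) cannot work in the form you describe.

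\emph{Why the cyclic argument does not transfer.} In the cyclic case the fibres of $x\mapsto x^p$ have exactly $p$ elements. So ``$|T\cap xK|\equiv 0\pmod p$'' means $xK\subseteq T$ or $xK\cap T=\emptyset$, and the stabiliser lemma finishes. Here every fibre of $x\mapsto x^{p^k}$ is a coset of $\Omega_k(H)$, which has order at least $p^2$ because $b\ge 1$. So your congruences give no coset structure at all.

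\emph{Why no counting from these data can succeed.} Take the primitive rank-$3$ Schur ring over $\mathbb{Z}_{p^a}\times\mathbb{Z}_{p^a}$ coming from the product action of $S_{p^a}\wr S_2$. Its basic sets are $\{1\}$, $(A\times 1)\cup(1\times A)$ and $A\times A$, where $A$ is the set of non-identity elements of $\mathbb{Z}_{p^a}$. This ring satisfies every congruence $|T\cap x\Omega_k(H)|\equiv 0\pmod p$ for $x\notin\Omega_k(H)$ and $1\le k\le a-1$; the intersections have sizes $0$, $p^k$, $p^{2k}-p^k$ or $p^{2k}$. It also satisfies $\sum_i|T_i|\equiv -1\pmod p$. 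So counting built only from Steps (1)--(2) cannot force rank $2$. The hypothesis $a>b$ must be used structurally, and saying that it ``guarantees these images are not of the form a primitive ring could support'' is not an argument.

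\emph{Where $a>b$ can genuinely enter.} It enters through the multiplier theorem, which you list but never use. Since $x^{p^{a-1}}\in C$ for every $x\in H$, the multipliers $m\equiv 1\pmod{p^{a-1}}$ fix $L=\Omega_{a-1}(H)$ pointwise. Their orbits on $H\setminus L$ are exactly the cosets of $C$. Hence every basic set meeting $L$ has its part outside $L$ equal to a union of $C$-cosets. It follows that the union $U$ of the basic sets missing $L$ is an $\mathfrak S$-set with $CU=U$. The stabiliser $\{h: hU=U\}$ of an $\mathfrak S$-set is an $\mathfrak S$-subgroup, so primitivity forces $U=\emptyset$. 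In $\mathbb{Z}_{p^a}^2$ the corresponding orbits are cosets of different subgroups of order $p$, which is precisely why the product example escapes. What your proposal lacks is structural information of this kind, carried all the way to a single nontrivial basic set.

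Two smaller errors:
\begin{itemize}
\item Your claim that $\{x: x^p\in\langle U\rangle\}$ is an $\mathfrak S$-subgroup is false. For the trivial Schur ring over $\mathbb{Z}_{p^2}$ with $T=H\setminus\{1\}$, the set $U$ is $\{1\}$, but $\Omega_1(H)$ is not an $\mathfrak S$-subgroup.
\item The case $b=0$ has to be excluded by the meaning of ``type $(p^a,p^b)$'', not by asserting $a\ge 2$. The group $\mathbb{Z}_p$ with $p\ge 3$ is not a B-group.
\end{itemize}
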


\begin{lemma}\label{B-group}
Let \( G \) be a quasiprimitive permutation group on \( \Omega \) admitting   a regular subgroup $T$ of prime-power degree. Then \( G \) is primitive on $\Omega$. In particular,
if \(T = SD_{2^{n+2}}\), with   $n >2$, then \( G \) is $2$-transitive on $\Omega$.

\end{lemma}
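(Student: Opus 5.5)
The plan is to treat the two assertions separately. The first is a classical statement about quasiprimitive groups of prime-power degree. The second is then obtained by combining it with the fact that the relevant $2$-group is a $B$-group, in the sense of Lemma~\ref{thm:25.5}.

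For the first assertion, let $|\Omega| = p^k$ and suppose $G$ is quasiprimitive but imprimitive. Choose a nontrivial $G$-invariant partition $\mathcal{B}$ with blocks of size $p^j$, where $0<j<k$. I would appeal to the known structure of quasiprimitive groups of prime-power degree. The O'Nan--Scott theorem for quasiprimitive groups (Praeger \cite{Praeger1993a}) lists the possible types. By Guralnick's classification of nonabelian simple groups with a subgroup of prime-power index, a quasiprimitive group of prime-power degree is either affine (and then primitive) or almost simple with socle one of $A_{p^k}$, $\mathrm{PSL}(d,q)$ with $(q^d-1)/(q-1)=p^k$, $\mathrm{PSL}(2,11)$ of degree $11$, $M_{23}$, $M_{11}$, or of product type built from these. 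In each of these cases the socle acts transitively, and the point stabilizer is known to be maximal. Hence $G$ is primitive.

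A more self-contained route is also available. Since $T$ is regular, $G = TG_\alpha$ and $|G:G_\alpha| = p^k$. Let $N=\mathrm{soc}(G)$; because $G$ is quasiprimitive, $N$ is transitive. One then checks, using the Guralnick list, that $N_\alpha$ is maximal in $N$ and that the action is the natural one. I expect this step to be the main obstacle. I would cite Guralnick's theorem together with Praeger's quasiprimitive O'Nan--Scott theorem, rather than attempt to reprove it.

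For the second assertion, take $T = SD_{2^{n+2}}$. By the first part, $G$ is primitive of degree $2^{n+2}$. I claim that $SD_{2^{n+2}}$ is a $B$-group. It contains the cyclic subgroup $\langle a\rangle$ of index $2$, of order $2^{n+1}$, which is composite. Wielandt's theorem states that a group of composite order with a cyclic subgroup of index at most $2$ is a $B$-group (this is \cite[Theorem 25.6]{Wielandt1964}, the generalization that also covers dihedral and dicyclic groups). Alternatively, Lemma~\ref{thm:25.5} can be adapted via Schur's method of $S$-rings: the Schur ring of a primitive group containing $T$ regularly must be trivial. The reason is that any nontrivial primitive $S$-ring over a group with a cyclic subgroup of index $2$ and $2$-power order would force an orbit structure conflicting with the power-map invariance on $\langle a\rangle$. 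Therefore $G$, being primitive and not imprimitive, is $2$-transitive.

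Concretely, I would carry this out as follows. First, invoke the Schur--Wielandt criterion that a primitive group containing a regular subgroup which has a cyclic subgroup of index $\le 2$ and composite order is $2$-transitive. Second, check that $n>2$ ensures $|\langle a\rangle| = 2^{n+1}\ge 8$ is composite. Third, conclude.
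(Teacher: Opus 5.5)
Your first step is acceptable as a citation. That a quasiprimitive group of prime-power degree is primitive is a known theorem; the paper simply quotes it from Li--Pan--Ma, and your Guralnick/O'Nan--Scott outline is the standard route to it. (Guralnick's list also contains $\mathrm{PSp}(4,3)$ of degree $27$, and the product-type case needs more than you wrote.)

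\textbf{The gap is in the second step.} The criterion you attribute to Wielandt is false: it is not true that every group of composite order with a cyclic subgroup of index at most $2$ is a $B$-group. You use $n>2$ only to get compositeness, so your argument would equally ``prove'' the statement for $n=2$, where it fails. To see this, identify a $4$-set with $\mathbb{Z}_4$ and let $S_4\wr S_2$ act in product action on $\mathbb{Z}_4^2$. This group is primitive, because $S_4$ is primitive and not regular of degree $4$. It has rank $3$, so it is not $2$-transitive. It contains $a\colon (x,y)\mapsto (y,x+1)$ and $b\colon (x,y)\mapsto(-x,-y-1)$, and one checks:
\begin{itemize}
\item $a^2$ is translation by $(1,1)$, so $a^8=1$;
\item $b^2=1$;
\item $b^{-1}ab\colon (x,y)\mapsto (y+1,x+2)$, which is $a^3$.
\end{itemize}
Thus $\langle a,b\rangle$ is a quotient of $SD_{16}$. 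It is also transitive: the translations $a^{2k}$ together with $(0,0)^a=(0,1)$, $(0,0)^{ab}=(0,2)$ and $(0,0)^b=(0,3)$ reach all $16$ points. Hence $\langle a,b\rangle\cong SD_{16}$ acts regularly, and $SD_{16}$ is not a $B$-group. The modular group $M_{16}$ embeds the same way, using $b\colon(x,y)\mapsto(x+2,y)$. Your S-ring remark fails for the same reason.

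\textbf{What is missing} is an input that genuinely uses $|T|\ge 32$. The paper takes it from the Kanazawa--Enomoto theorem that these semidihedral $2$-groups are $B$-groups. You could also finish inside your own framework, using only that $G$ is primitive of degree $2^{n+2}$ and contains an element of order $2^{n+1}$. In prime-power degree only types HA, AS and PA occur.
\begin{itemize}
\item \emph{Type HA.} Here $G\le \mathrm{AGL}(n+2,2)\le\mathrm{GL}(n+3,2)$. Its $2$-elements are unipotent, of order at most $2^{\lceil\log_2(n+3)\rceil}$, which is less than $2^{n+1}$ once $n\ge 3$.
\item \emph{Type PA.} Here $G\le H\wr S_k$ with $\ell^k=2^{n+2}$, $\ell\ge 8$ and $k\ge 2$. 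Its $2$-elements have order at most $k\ell<\ell^k/2$.
\item \emph{Type AS.} Guralnick's list leaves only socle $A_{2^{n+2}}$, or $\mathrm{PSL}(2,q)$ with $q+1=2^{n+2}$, in the natural action; the other entries have odd degree. Both remaining actions are $2$-transitive.
\end{itemize}
For $n=2$ the HA case survives, and that is exactly where $S_4\wr S_2\le\mathrm{AGL}(4,2)$ sits.
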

\begin{proof}
By \cite[Theorem 2.2]{LiPanMa2009}, every   quasiprimitive permutation group of
prime-power degree is primitive. Note that  the semi-dihedral group \( SD_{2^{n+2}}\) has order $2^{n+2}$, so   $|\Omega| = 2^{n+2}$ is a prime-power. This implies that   \( G \) is primitive on \( \Omega \). Furthermore,
since $G$ is a  prime-power degree permutation group  containing the  regular semi-dihedral subgroup $T$,
it follows from   a theorem of Kanazawa and Enomoto    \cite[Theorem]{KanazawaEnomoto1968} that    \( G \) is a $B$-group. Consequently, the primitivity of  \( G \) forces $G$ to be   $2$-transitive on $\Omega$. \end{proof}

The following result lists all subgroups of the group $\mathrm{PSL}(2, p^f)$ where $p$ is a prime and $f$ is a positive integer.

\begin{theorem}[{\cite{Hup67}}]\label{Dickson, 8.27}
Let  $p$ be  a prime and and $f$  a positive integer. Then the group \( \mathrm{PSL}(2, p^f) \) has only the following subgroups:
\begin{itemize}
    \item [(1)] Elementary abelian $p$-groups;
    \item [(2)] Cyclic groups of order $z$, where $z$ is a divisor of $\dfrac{p^f+1}{k}$ and $k = (p^f - 1, 2)$;
    \item [(3)] Dihedral groups of order $2z$, where $z$ is as defined in (2);
    \item [(4)] The alternating group $A_4$; this can occur only for $p > 2$ or when $p = 2$ and $f \equiv 0 \pmod{2}$;
    \item [(5)] The symmetric group $S_4$; which occurs if and  only if $p^{2f} \equiv 1 \pmod{16}$;
    \item [(6)] The alternating group $A_5$ for $p = 5$ or $p^{2f} \equiv 1 \pmod{5}$;
    \item [(7)] A semidirect product of an elementary abelian group of order $p^m$ with a cyclic group of order $t$, where $t$ is a divisor of $(p^m - 1,\; p^f - 1)$;
    \item [(8)] The group $\operatorname{PSL}(2, p^m)$ for $m$ a divisor of $f$, or the group $\operatorname{PGL}(2, p^m)$ for $2m$ a divisor of $f$.
\end{itemize}
\end{theorem}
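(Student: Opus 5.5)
Since the statement is Dickson's classical theorem, I would follow Dickson's original counting strategy, working in $\mathrm{PSL}(2,q)$ with $q=p^f$ and $k=(q-1,2)$. The first step is to describe the maximal abelian subgroups. Every nontrivial element of $\mathrm{PSL}(2,q)$ is the image of a matrix that falls into exactly one of three types, according to its characteristic polynomial:
\begin{itemize}
\item unipotent, lying in a unique Sylow $p$-subgroup, which is elementary abelian of order $q$;
\item split semisimple, lying in a unique cyclic torus of order $(q-1)/k$;
\item non-split semisimple, lying in a unique cyclic torus of order $(q+1)/k$.
\end{itemize}
The maximal abelian subgroups therefore partition the nonidentity elements, and any two of them meet trivially. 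I would also record their normalizers: a Borel subgroup $E_q{:}\mathbb{Z}_{(q-1)/k}$ for a Sylow $p$-subgroup, and dihedral groups $D_{2(q\mp1)/k}$ for the two kinds of tori.

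Now let $H$ be a subgroup. If $p\nmid |H|$, then $H$ is partitioned by its nontrivial intersections $A_1,\dots$ with tori, each of which is cyclic. Counting these intersections up to $H$-conjugacy gives the class equation
\[
1-\frac{1}{|H|}=\sum_i \Bigl(1-\frac{1}{|A_i|}\Bigr)\frac{|A_i|}{|N_H(A_i)|}.
\]
Each index $|N_H(A_i):A_i|$ is $1$ or $2$. This Diophantine equation has the same shape as the one for finite rotation groups, and its solutions give exactly four possibilities:
\begin{itemize}
\item $H$ is cyclic, giving case (2);
\item $H$ is dihedral, giving case (3);
\item $H\cong A_4$, $S_4$ or $A_5$, giving cases (4)--(6).
\end{itemize}
Cyclic and dihedral subgroups sit inside the torus normalizers, so their orders divide $2(q\pm1)/k$.

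I would then settle the arithmetic side conditions by asking which element orders exist. $S_4$ requires an element of order $4$ whose square is an involution, which forces $8\mid q\pm1$, that is $q^2\equiv1\pmod{16}$. $A_5$ requires an element of order $5$, which forces $p=5$ or $5\mid q^2-1$. $A_4$ requires a Klein four-group, which exists for odd $q$ and, when $p=2$, only if $4\mid q$, i.e. $f$ even. Conversely, I would exhibit explicit generators to show that these groups do occur under the stated conditions.

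If $p\mid |H|$, let $P=H\cap S$ for a Sylow $p$-subgroup $S$. When $P$ is normal in $H$, we get $H\le N(S)$, so $H$ is a semidirect product of an elementary abelian group $P$ of order $p^m$ with a cyclic group acting on it by field multiplications. Its order $t$ then divides both $p^m-1$ and $q-1$, which is case (7). This also covers case (1).

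When $H$ contains at least two distinct Sylow $p$-subgroups, I would conjugate so that they are the upper and lower unitriangular images. The entries generated by $H$ then form an additive group $F_0$. The key claim is that the set of $\lambda$ for which $\begin{pmatrix}1&\lambda\\0&1\end{pmatrix}\in H$ is, after rescaling, a subfield $\mathbb{F}_{p^m}$, and that $\langle P,P'\rangle$ equals $\mathrm{SL}(2,p^m)$ modulo scalars. From this, $H$ lies in the normalizer of $\mathrm{PSL}(2,p^m)$ in $\mathrm{PSL}(2,q)$, which is $\mathrm{PSL}(2,p^m)$ itself or $\mathrm{PGL}(2,p^m)$, the latter only when the diagonal automorphism is realized inside $\mathrm{PSL}(2,q)$, i.e. when $2m\mid f$. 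This gives case (8). The small exceptions $p\in\{2,3\}$ with small $m$, where $A_4$ or $S_4$ reappear, are absorbed into the earlier cases.

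I expect the main obstacle to be this last step, proving that the subgroup generated by two opposite unipotent subgroups is $\mathrm{PSL}$ over a subfield. I would try Dickson's trace argument: the commutator computations show that the multipliers $\lambda\mu$ coming from pairs of unipotents generate a subring of $\mathbb{F}_q$ closed under the operations forced by conjugation. A finite subring of a field is a subfield, and generation of $\mathrm{SL}(2,\mathbb{F}_{p^m})$ by its elementary matrices then finishes the argument.
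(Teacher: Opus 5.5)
The paper gives no proof here. It quotes Huppert (Hauptsatz II.8.27), and your outline is the classical Dickson--Huppert route: a TI partition into Sylow $p$-subgroups and cyclic tori, a class equation for $p'$-subgroups, the Borel case, and a subfield case. The first three parts are sound in outline, modulo the slips noted at the end and the routine step of turning the numerical solutions of the class equation into the isomorphism types $A_4$, $S_4$, $A_5$. The last part has a genuine gap.

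Your key claim is that two Sylow $p$-subgroups $P\neq P'$ of $H$ generate $\mathrm{SL}(2,p^m)$ modulo scalars. This is false, and the exceptions are not $A_4$ and $S_4$; those are $\mathrm{PSL}(2,3)$ and $\mathrm{PGL}(2,3)$, the generic outcome when $p^m=3$.
\begin{itemize}
\item For $p=2$, the unitriangular involutions $u(1)$ and $l(\mu)$ generate a dihedral group of order $2n$, where $n$ is the odd order of $u(1)l(\mu)$, an element of trace $\mu$. In $\mathrm{SL}(2,4)$ with $\mu$ a primitive cube root of unity this is $D_{10}$, not $\mathrm{SL}(2,2)$.
\item For $p=3$, $\langle(123)\rangle$ and $\langle(345)\rangle$ generate $A_5$ inside $A_6\cong\mathrm{PSL}(2,9)$, whereas your claim predicts $\mathrm{PSL}(2,3)\cong A_4$.
\end{itemize}
These groups have non-normal Sylow $p$-subgroups, so they can only arise in this last case; they are not ``absorbed into the earlier cases''. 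As written, your argument would show that $\mathrm{PSL}(2,4)$ has no $D_{10}$ and $\mathrm{PSL}(2,9)$ has no $A_5$.

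The first example also shows why the subring idea cannot work. There the products $\lambda\mu$ generate all of $\mathbb{F}_4$, yet $u(\lambda)\in H$ only for $\lambda\in\{0,1\}$. A field generated by the multipliers says nothing about which elementary matrices lie in $H$. You need $\Lambda$ itself to be closed under multiplication, and $M=\Lambda$ after a single diagonal rescaling.

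That needs group-theoretic input you have not supplied. Since Sylow $p$-subgroups of $\mathrm{PSL}(2,q)$ are TI and the $p$-elements of $N(S)$ lie in $S$, $P$ acts semiregularly on the other Sylow $p$-subgroups of $H$. Hence $|\mathrm{Syl}_p(H)|=1+kp^m$, and $N_H(P)=P{:}C$ with $C$ cyclic acting on $\Lambda$ by multiplication. Counting the elements of $H$ through the TI partition then controls $k$ and $|C|$. This is exactly where $p^m=2$ (dihedral groups of order $2n$, $n$ odd) and $p^m=3$ ($A_5$) split off. Outside these cases one gets $|\mathrm{Syl}_p(H)|=p^m+1$, and only then can one show $\Lambda=M=\mathbb{F}_{p^m}$ after rescaling; this is the content of Dickson's lemma on subgroups of $\mathrm{SL}(2,q)$ generated by elements of order $p$.

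The same count also fills your unproved step that $H$ normalizes $\langle P,P'\rangle$. The $p^m+1$ Sylow $p$-subgroups of $\langle P,P'\rangle\cong\mathrm{PSL}(2,p^m)$ are then all those of $H$, so $\langle P,P'\rangle=O^{p'}(H)\trianglelefteq H$.

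Smaller slips:
\begin{itemize}
\item For odd $q\ge 7$ the maximal abelian subgroups do not partition the nonidentity elements: an involution lies in a cyclic torus and also in Klein four-subgroups of its dihedral centralizer. What is true, and what you actually use, is that the Sylow $p$-subgroups and cyclic tori form a partition.
\item For $p=2$, $4\mid 2^f$ means $f\ge 2$, not $f$ even; $\mathrm{PSL}(2,8)$ has Klein four-groups. The real obstruction to $A_4$ is different. An element of order $3$ normalizing $V_4$ lies in $N(S)=E_q{:}\mathbb{Z}_{q-1}$ for the unique Sylow $2$-subgroup $S\supseteq V_4$, which forces $3\mid q-1$.
\item Your $q\pm1$ is the correct form of (2)--(3). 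The statement as printed, with only $p^f+1$, omits the split tori and is false as written: for example $S_3=N(\mathbb{Z}_3)\le\mathrm{PSL}(2,7)$ appears on no line. What you would prove is Huppert's version.
\end{itemize}
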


To further characterize the higher transitivity of permutation groups, we recall the following classical classification result concerning $3$-transitive permutation groups.

\begin{lemma}[{\cite{Cameron1999}}]\label{3-trans}
Let $G$ be a $3$-transitive permutation group of degree at least $4$. Then one of the following occurs.
\begin{itemize}
    \item [(1)]The socle of $G$ is $3$-transitive;
    \item [(2)]$\operatorname{PSL}(2,q) \trianglelefteq G \leq \operatorname{P\Gamma L}(2,q)$ with natural action on the projective line $\operatorname{PG}(1,q)$ of degree $q+1$, for odd $q \geq 5$, and with the socle of $G$ being isomorphic to $\operatorname{PSL}(2,q)$ and not $3$-transitive;
    \item [(3)]$G = \operatorname{AGL}(m,2)$, $m \geq 3$;
    \item [(4)]$G = \mathbb{Z}_2^4 : A_7 < \operatorname{AGL}(4,2)$;
    \item [(5)]$G = \operatorname{PGL}(2,3) \cong \operatorname{AGL}(2,2) \cong S_4$ of degree $4$.
\end{itemize}
\end{lemma}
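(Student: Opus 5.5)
The plan is to deduce the statement from the classification of finite $2$-transitive permutation groups, which rests on the classification of finite simple groups. I take that classification as the main input, since re-deriving it is far outside the scope of this paper. Let $G$ be $3$-transitive of degree $n\ge 4$. Then $G$ is in particular $2$-transitive, so by Burnside's theorem $\operatorname{soc}(G)$ is either elementary abelian and regular (the affine case) or nonabelian simple (the almost simple case). I will treat the two cases separately. In each, I first list the $2$-transitive possibilities and then test which of them are $3$-transitive by examining the action of a two-point stabilizer $G_{\alpha\beta}$ on the remaining $n-2$ points.

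\emph{Affine case.} Write $G=V:G_0$ with $V=\mathbb{Z}_p^m$ and $G_0\le \mathrm{GL}(m,p)$. Then $3$-transitivity means that $G_0$ is $2$-transitive on $V\setminus\{0\}$. Since $G_0$ preserves scalar multiples, if $p>2$ then $v$ and $2v$ cannot be mapped to $v$ and a vector outside $\langle v\rangle$. This forces $p=2$, or else $n=p^m=3$, which is excluded. So $p=2$, and $G_0$ must be $2$-transitive on the $2^m-1$ nonzero vectors.
\begin{itemize}
\item For $m\ge 3$, the Hering list of transitive linear groups shows that the only $2$-transitive options on nonzero vectors are $G_0=\mathrm{GL}(m,2)$, giving case (3), and the exceptional $A_7<\mathrm{GL}(4,2)\cong A_8$ acting $2$-transitively on the $15$ nonzero vectors, giving case (4).
\item For $m=2$ we get $G=\mathrm{AGL}(2,2)\cong S_4$ on $4$ points, which is case (5); here the socle $\mathbb{Z}_2^2$ is not $3$-transitive.
\end{itemize}
The subtle point is that $A_7$ arises here even though $A_7$ is not a linear group of Lie type in characteristic $2$.

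\emph{Almost simple case.} I run through the list of $2$-transitive almost simple groups:
\begin{itemize}
\item the alternating and symmetric groups;
\item $\mathrm{PSL}(d,q)$ on points or hyperplanes;
\item $\mathrm{Sp}(2d,2)$ of degrees $2^{2d-1}\pm 2^{d-1}$;
\item the unitary, Suzuki and Ree groups on $q^3+1$, $q^2+1$ and $q^3+1$ points respectively;
\item $\mathrm{PSL}(2,11)$ of degree $11$, $A_7$ of degree $15$, the Mathieu groups, $\mathrm{HS}$ of degree $176$, and $\mathrm{Co}_3$ of degree $276$.
\end{itemize}
For each family I compute the suborbits of the two-point stabilizer. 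For $\mathrm{PSL}(d,q)$ with $d\ge 3$, collinearity is an invariant of triples, so these groups are not $3$-transitive. The unitary, Suzuki and Ree groups preserve a nontrivial structure on triples (blocks or orbits of the point stabilizer of unequal sizes), so they are not $3$-transitive either. Among the sporadic and Symplectic cases, the $3$-transitive ones are $A_n$ ($n\ge 5$), the Mathieu groups and $\mathrm{M}_{11}$ of degree $12$; in each of these the socle itself is $3$-transitive, which puts them in case (1). The remaining family is $\mathrm{PSL}(2,q)\trianglelefteq G\le\mathrm{P\Gamma L}(2,q)$ on $q+1$ points. Here $\mathrm{PGL}(2,q)$ is sharply $3$-transitive, and $\mathrm{PSL}(2,q)$ is $3$-transitive exactly when $q$ is even. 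For $q$ odd, $\mathrm{PSL}(2,q)$ has two orbits on ordered triples, distinguished by whether a certain cross-ratio-type quantity is a square. Hence $G$ is $3$-transitive if and only if $G$ contains an element fusing these two orbits, namely some element of $\mathrm{P\Gamma L}(2,q)\setminus \mathrm{P\Sigma L}(2,q)$. This gives case (2) for odd $q\ge 5$; for $q=3$ the group is $S_4$, which is again case (5).

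The main obstacle is the completeness of the case analysis, and in particular ruling out every $2$-transitive group not in the list above. The bottleneck is the affine step: showing that $A_7$ and $\mathrm{GL}(m,2)$ are the only subgroups of $\mathrm{GL}(m,2)$ that are $2$-transitive on nonzero vectors. For this I would invoke Hering's theorem directly, as in \cite{Cameron1999}, rather than attempt an independent argument.
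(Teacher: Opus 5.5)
Your outline is correct: the paper gives no proof of this lemma and simply imports it from \cite{Cameron1999}. Your reduction is the standard derivation behind that citation: Burnside's dichotomy, then Hering's theorem to find the $G_0$ that are $2$-transitive on $V\setminus\{0\}$, then a case check of the almost simple $2$-transitive list, using that $\mathrm{P\Sigma L}(2,q)$ preserves the two $\mathrm{PSL}(2,q)$-orbits on triples for odd $q$.

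Two local justifications need repair, though the conclusions stand. First, in the affine case with $m=1$ there is no vector outside $\langle v\rangle$. Argue instead that the linear group $G_0$ can only send $(v,2v)$ to pairs of the form $(u,2u)$, which forces $p^m-1\le 2$ when $p>2$. Second, for $\mathrm{Sz}(q)$ the reasons you give do not apply as stated. The circles of the Suzuki--Tits inversive plane form a $3$-design, so they do not separate triples, and the two-point stabilizer $\mathbb{Z}_{q-1}$ has orbits of equal size $q-1$. Exclude it instead by the order bound $f\,q^2(q^2+1)(q-1)<q^2(q^2+1)(q^2-1)$ for $q=2^f$.
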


\bigskip
\subsection{Graphs and related results}
We now fix the graph notation used throughout this paper.
Let $\mathrm{C}_n$ denote   the cycle graph of length $n$, $\mathrm{K}_n$ the complete graph of order $n$, and $\mathrm{K}_{n,n}$ the complete bipartite graph of order $2n$. We write  $\mathrm{K}_{n,n} - n\mathrm{K}_2$ for  the graph obtained by deleting a perfect matching from $\mathrm{K}_{n,n}$, and $\mathrm{K}_{x[y]}$ for the complete multipartite graph with $x\ge 3$ parts each of size $y\ge 2$.

Let $T$ be a finite group, and let $S\subseteq T$ satisfy $1\notin S$ and $S=S^{-1}$. The \textit{Cayley graph} $\Cay(T,S)$ is defined as the graph with vertex set $T$ and edge set $\{\{g,sg\}\mid g\in T,\,s\in S\}$. In particular, $\Cay(T,S)$ is connected if and only if $T=\langle S\rangle$. Let $R(T)=\{\sigma_t\mid t\in T\}$ denote the group of right translations of $T$, where $\sigma_t:x\mapsto xt$. Then $R(T)$ is a regular subgroup of $\operatorname{Aut}(\Gamma)$, and we routinely identify $T$ with $R(T)$. As established by Godsil \cite[Lemma 2.1]{Godsil1981}, $N_{\operatorname{Aut}(\Gamma)}(T)=T:\operatorname{Aut}(T,S)$, where $\operatorname{Aut}(T,S)=\{\sigma\in\operatorname{Aut}(T)\mid S^\sigma=S\}$ denotes the automorphism group of the pair $(T,S)$. A Cayley graph $\Gamma$ is called \textit{normal} if $\operatorname{Aut}(\Gamma)=N_{\operatorname{Aut}(\Gamma)}(T)$, a notion first introduced by Xu \cite{Xu1998}. It is a classical result that a graph $\Gamma$ is a Cayley graph on a group $G$ if and only if $\operatorname{Aut}(\Gamma)$ contains a regular subgroup isomorphic to $G$, see \cite[Lemma~16.3]{Biggs1993}.

A graph with $2n$ vertices is called   a \textit{bicirculant} if its automorphism group contains an element of order $n$ that  is  fixed-point-free on the vertex set  and decomposes the vertex set into  exactly two disjoint $n$-cycles. The classification of finite $2$-arc-transitive bicirculants has been accomplished   in \cite{Jin2023}. In particular,  every  Cayley graph on  a  semi-dihedral group is a bicirculant.

For each integer $d \geq 3$ and each prime power $q$, let $B(\operatorname{PG}(d-1,q))$ denote  the bipartite graph whose vertices are the $1$-dimensional and $(d-1)$-dimensional subspaces of a $d$-dimensional vector space over $\operatorname{GF}(q)$, where two subspaces are adjacent if and only if one is contained in the other. Denote by $B'(\operatorname{PG}(d-1,q))$ the bipartite complement of $B(\operatorname{PG}(d-1,q))$, that is, the bipartite graph with the same vertex set but where a $1$-subspace and a $(d-1)$-subspace are adjacent if and only if their intersection is the zero subspace. Both graphs have the same automorphism group $P\Gamma L(d,q) : \mathbb{Z}_2$, see \cite{Du2008}.

Let $X$ be a connected graph and let $N$ be a finite group. Suppose $\psi : \operatorname{Arc}(X) \to N$ is a function assigning a group element to each arc of $X$ such that $\psi(v,u) = (\psi(u,v))^{-1}$ for every arc $(u,v) \in \operatorname{Arc}(X)$. Then $\psi$ is called a \textit{voltage assignment}, the values of $\psi$ are called \textit{voltages}, and $N$ is the \textit{covering group}. In particular, $\psi$ is called \textit{reduced} if the values of $\psi$ on a spanning tree are trivial (equal to the identity element of $N$). From this we may construct a larger graph $\operatorname{Cov}(X,\psi) = X \times_\psi N$, called the \textit{(derived) voltage graph} (or \textit{cover graph}), with vertex set $V(X) \times N$ and adjacency defined by $(u,g) \sim (v,h)$ if and only if $u \sim v$ and $h = g\psi(u,v)$.

\smallskip
Below we introduce some voltage graphs:
\begin{itemize}
\item [(1)] The  \emph{graph $X_1(4, q)$}:  Let $q$ be a prime power such that $q \equiv 3 \pmod{4}$ and let $F(q) = \operatorname{GF}(q)$, $S(q)$ be the set of all non-zero squares of $F(q)$ and $N(q)$ be the set of non-zero non-squares of $F(q)$. Let $Y = K_{q+1}$, and identify the vertices of $Y$ with the projective line $\operatorname{PG}(1, q) = F(q) \cup \{\infty\}$ (see \cite[pp.~283, 285]{DuMarusicWaller1998}). We define $X_1(4, q)$ to be the $4$-fold cover $\operatorname{Cov}(Y, \psi)$, where the voltage $\psi : \operatorname{Arc}(Y) \to \mathbb{Z}_4$ is defined with the following rule:
\[
\psi(x, y) :=
\begin{cases}
0, & \text{if } \infty \in \{x, y\}; \\
1, & \text{if } y - x \in S(q); \\
3, & \text{if } y - x \in N(q).
\end{cases}
\]

\item [(2)] The  \emph{graph $X_2(3)$}:  Let $\Gamma = K_{5,5} - 5K_2$, where $V(\Gamma) = \{1,2,3,4,5\} \cup \{1',2',3',4',5'\}$,
\[
E(\Gamma) = \{ ij' \mid i \neq j,\ i,j' \in V(\Gamma) \}.
\]

Define $X_2(3) = \Gamma \times_\psi \mathbb{Z}_3$, with the voltage assignment $\psi : \operatorname{Arc}(\Gamma) \to \mathbb{Z}_3$ such that
\[
\begin{aligned}
\psi_{1,2'} &= \psi_{1,3'} = \psi_{1,4'} = \psi_{1,5'} = \psi_{2,1'} = \psi_{2,3'} = \psi_{3,1'} \\
&= \psi_{3,2'} = \psi_{4,1'} = \psi_{4,5'} = \psi_{5,1'} = \psi_{5,4'} = 0, \\[6pt]
\psi_{2,5'} &= \psi_{3,4'} = \psi_{4,3'} = \psi_{5,2'} = 1, \\[6pt]
\psi_{2,4'} &= \psi_{3,5'} = \psi_{4,2'} = \psi_{5,3'} = 2.
\end{aligned}
\]

\item [(3)] The  \emph{graph $AT_D(4, 6)$}:  We define $N$-covers of $K_4$ with covering transformation groups $N = \langle a, b \rangle \cong D_6$, where $V(K_4) = \{1, 2, 3, 4\}$.

 $AT_D(4, 6) = K_4 \times_\psi D_6$, with the voltage assignment $\psi : \operatorname{Arc}(K_4) \to D_6$ defined by
\[
\begin{aligned}
\psi_{1,2} &= b, & \psi_{1,3} &= ba, & \psi_{1,4} &= ba^{-1}, \\
\psi_{2,3} &= ba^{-1}, & \psi_{2,4} &= ba, & \psi_{3,4} &= b.
\end{aligned}
\]

\item [(4)] The  \emph{graph $K_{q+1}^{2d}$}: Let $q = r^l$ for an odd prime $r$ and let $\operatorname{GF}(q)^* = \langle \theta \rangle$ be the multiplicative group of the field $\operatorname{GF}(q)$ of order $q$. For any $d \mid q-1$ and $d \ge 2$, define a voltage graph $K_{q+1}^{2d} = (K_{q+1,q+1} - (q+1)K_2) \times_\psi \mathbb{Z}_d$, where
\[
\begin{aligned}
\psi_{\infty', i} &= \psi_{\infty, j'} = \overline{0},  i, j \ne \infty; \\
\psi_{i, j'} &= \overline{h},  j - i = \theta^h,\ i, j \ne \infty,\ h \text{ is an integer.}
\end{aligned}
\]

\item [(5)] The  \emph{graph $X(2,2)$}:  Let $V(2,2)$ be the 2-dimensional vector space over $\mathbb{F}_2$. Let $K_{4,4}$ be the complete bipartite graph with the bipartition $V(K_{4,4}) = U \cup W$, where $U = \{ \alpha \mid \alpha \in V(2,2) \}$ and $W = \{ \alpha' \mid \alpha \in V(2,2) \}$. Define the cover $X(2,2) = K_{4,4} \times_\psi \mathbb{Z}_2$ with the voltage assignment
\[
\psi(\alpha, \beta') = \alpha \beta^T = a_1b_1 + a_2b_2,
\]
for all $\alpha = (a_1, a_2)$ and $\beta = (b_1, b_2) \in V(2,2)$.

\item [(6)] The  \emph{graph $CQ(k,n)$}:  Let $k$ be an integer such that $1 \le k \le n-1$ and $(k,n)=1$. Then, $k \in \mathbb{Z}_n^*$. Denote by $k^{-1}$ the inverse of $k$ in $\mathbb{Z}_n^*$. Let $V(Q_3) = \{\mathbf{a}, \mathbf{b}, \mathbf{c}, \mathbf{d}, \mathbf{w}, \mathbf{x}, \mathbf{y}, \mathbf{z}\}$ and let $T$ be a spanning tree consisting of the edges $\{\mathbf{ax}, \mathbf{ay}, \mathbf{az}, \mathbf{bw}, \mathbf{bz}, \mathbf{cz}, \mathbf{dy}\}$. Now, for any two non-negative integers $k$ and $n$ with $1 \le k \le n-1$ and $(k,n)=1$, the graph $CQ(k,n)$ is defined to have vertex set $V(CQ(k,n)) = V(Q_3) \times \mathbb{Z}_n$ and edge set
\[
\begin{aligned}
E(CQ(k,n)) = \{ & (\mathbf{a},i)(\mathbf{x},i),\ (\mathbf{a},i)(\mathbf{y},i),\ (\mathbf{a},i)(\mathbf{z},i),\ (\mathbf{b},i)(\mathbf{w},i), \\
& (\mathbf{b},i)(\mathbf{z},i),\ (\mathbf{c},i)(\mathbf{z},i),\ (\mathbf{d},i)(\mathbf{y},i), \\
& (\mathbf{b},i)(\mathbf{y},i+1),\ (\mathbf{c},i)(\mathbf{w},i+k),\ (\mathbf{c},i)(\mathbf{x},i-k^{-1}), \\
& (\mathbf{d},i)(\mathbf{w},i-k^{-1}-1),\ (\mathbf{d},i)(\mathbf{x},i+k) \mid i=0,1,\dots,n-1\},
\end{aligned}
\]
where all numbers $i+t$ ($i,t \in \mathbb{Z}_n$) are taken modulo $n$.
Note that the graph $CQ(1,3) \cong GP(12,5)$ has order $24$, while $CQ(1,6) \cong GP(24,5)$ has order $48$.

\item [(7)] The \emph{graph $\Gamma(d,q,r)$}:  A group divisible design $\mathcal{D}$ with parameters $(n,m;k;\lambda_1, \\ \lambda_2)$, denoted by $\operatorname{GDD}(n,m;k;\lambda_1,\lambda_2)$, is an ordered triple $(\mathcal{P},\mathcal{G},\mathcal{B})$, where $\mathcal{P}$ is a set of points, $\mathcal{G}$ is a partition of $\mathcal{P}$ into $m$ sets of size $n$, each set being called a \textit{group}, and $\mathcal{B}$ is a collection of $k$-subsets, called \textit{blocks} of $\mathcal{P}$ so that each pair of points from the same group occurs in exactly $\lambda_1$ blocks and each pair of points from different groups occurs in exactly $\lambda_2$ blocks. The triple $\mathcal{I} = (\mathcal{P},\mathcal{B},I)$ of a group divisible design is an incidence structure with the natural incidence relation $I$, and the dual incidence structure $\mathcal{I}^* = (\mathcal{B},\mathcal{P},I^*)$. If there exists a partition $\mathcal{G}'$ of $\mathcal{B}$ such that the triple $(\mathcal{B},\mathcal{G}',\mathcal{P})$ is a $\operatorname{GDD}(n,m;k;\lambda_1,\lambda_2)$, then we call that the $\mathcal{D}$ is a \textit{group divisible design with dual property} with parameters $(n,m;k;\lambda_1,\lambda_2)$ and we denote such a design by $\operatorname{GDDDP}(n,m;k;\lambda_1,\lambda_2)$.

Let $d \ge 2$ be an integer and let $q$ be a prime power. Let $V$ be a vector space of dimension $d$ over $\operatorname{GF}(q)$, the finite field with $q$ elements. We define the set of non-zero vectors in $V$ as the point set $\mathcal{P}$ and the set of affine hyperplanes not containing zero in $V$ as the block set $\mathcal{B}$, that is, $\mathcal{P} = \{x \in V \mid x \ne 0\}$ and $\mathcal{B} = \{x + H \mid H \text{ is a hyperplane in } V \text{ and } x \in V \setminus H\}$.

We make a partition $\mathcal{G}$ of $\mathcal{P}$ such that the collinear non-zero vectors in $V$ belong to the same group in $\mathcal{G}$. Note that each group in $\mathcal{G}$ has size $q-1$. Then $(\mathcal{P};\mathcal{G};\mathcal{B})$ is a $\operatorname{GDD}(n,m;k;0;\lambda_2)$, where $n = q-1$, $m = \frac{q^d-1}{q-1}$ is the number of projective points in $V$, $k = q^{d-1}$ is the number of affine hyperplanes containing a given non-zero vector, and $\lambda_2 = q^{d-2}$ is the number of affine hyperplanes containing two given non-zero and non-collinear vectors.

We look at the dual incidence structure $\mathcal{I}^*$ of $\mathcal{I} = (\mathcal{P};\mathcal{B};I)$. We make a partition $\mathcal{G}'$ of $\mathcal{B}$ such that the parallel affine hyperplanes belong to the same group in $\mathcal{G}'$. Then $(\mathcal{B};\mathcal{G}';\mathcal{P})$ becomes a $\operatorname{GDD}(n,m;k;0;\lambda_2)$, where $n = q-1$, $m = \frac{q^d-1}{q-1}$ is the number of $(d-1)$-dimensional subspaces in $V$, $k = q^{d-1}$ is the number of non-zero vectors in an affine hyperplane, and $\lambda_2 = q^{d-2}$ is the number of non-zero vectors in the intersection of two given non-parallel affine hyperplanes.

This shows that $\mathcal{D}(d;q) := (\mathcal{P};\mathcal{G};\mathcal{B})$ is a $\operatorname{GDDDP}(n;m;k;0;\lambda_2)$, where $n,m,k,\lambda_2$ are given above. We denote $\Gamma(d,q) := \Gamma(\mathcal{D}(d;q))$ as the point-block incidence graph of $\mathcal{D}(d;q)$. It is clear that the general linear group $GL(d,q)$ acts as a group of automorphisms of the graph $\Gamma(d,q)$. For any $r \mid q-1$, let $N \leq  Z(GL(d,q)) \cong \mathbb{Z}_{q-1}$ and $|N| = (q-1)/r$. Let $\Gamma(d,q,r)$ be the quotient graph of $\Gamma(d,q)$ induced by $N$. Then $V(\Gamma(d,q,r))=2r\frac{q^d-1}{q-1}$.
\end{itemize}
\bigskip

The following fact is well-known, and can be easily proved.

\begin{lemma}\label{|N|=p}
Let \( X \mapsto Y \) be a regular cyclic covering of a connected graph such that some $2$-distance-transitive group \( G \leq \text{Aut}(X) \) projects to   \( Y \) via the covering group. Then there exists a regular prime cyclic covering \( X' \mapsto Y \) such that some $2$-distance-transitive group \( G' \leq \text{Aut}(X') \) projects along \( X' \mapsto Y \).
\end{lemma}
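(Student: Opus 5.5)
The approach is to reduce to the case where the covering group has prime order. Write the regular cyclic covering as $X = \operatorname{Cov}(Y,\psi)$ with covering group $K \cong \mathbb{Z}_n$. By hypothesis $G \le \operatorname{Aut}(X)$ normalizes $K$ and induces a group $\bar G = G/K$ on $Y$. Choose a prime $p$ dividing $n$ and let $M$ be the unique subgroup of $K$ of index $p$. Because $K$ is cyclic, $M$ is characteristic in $K$, and since $K \trianglelefteq G$ this gives $M \trianglelefteq G$. We then set $X' = X_M$, the quotient of $X$ by the orbits of $M$, and $G' = G/M$. The claim is that $X' \to Y$ is a regular cyclic covering with covering group $K/M \cong \mathbb{Z}_p$, and that $G'$ acts faithfully and $2$-distance-transitively on $X'$ while projecting to $\bar G$ on $Y$.

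\textbf{Covering structure.} The intermediate graph can be realized as $X' = \operatorname{Cov}(Y, \pi\circ\psi)$, where $\pi : K \to K/M$ is the natural projection. Its vertices are the pairs $(u, gM)$, and adjacency is inherited from $X$. The maps $X \to X' \to Y$ are both coverings (local isomorphisms), so $X'$ is connected and $K/M$ acts regularly on each fibre. We may assume $n > p$, since otherwise there is nothing to prove. Then $M \neq 1$ is semiregular and each $M$-orbit lies inside a fibre of $K$.

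\textbf{Faithfulness.} The action of $G/M$ on $X'$ may not be faithful. Let $L/M$ be its kernel. Then $L$ fixes every $M$-orbit, hence every $K$-fibre, so $L$ lies in the kernel of $G$ on $V(Y)$. That kernel equals $K$, because $K$ is the full covering group and $G$ projects via $K$. So $L \le K$, and $L/M$ is either trivial or all of $K/M$. The second option is impossible since $K/M$ acts nontrivially on the fibres of $X'$. Hence $G' = G/M \le \operatorname{Aut}(X')$.

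\textbf{$2$-distance-transitivity.} This is the step that needs care. Vertex- and arc-transitivity pass directly to the quotient. For pairs at distance $2$, a local isomorphism maps each $2$-path in $X$ to a $2$-path in $X'$, and every $2$-path in $X'$ lifts. The remaining issue is that two vertices at distance $2$ in $X$ might map to vertices at distance $\le 1$ in $X'$, and conversely a distance-$2$ pair in $X'$ might lift only to pairs at distance $2$ or to pairs lying in the same fibre.

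We handle this as follows. Take any $(u',w')$ at distance $2$ in $X'$. Lift a $2$-path $u' \sim v' \sim w'$ to a $2$-path $u \sim v \sim w$ in $X$. Then $u \ne w$, and $u,w$ are not adjacent: otherwise their images would be adjacent or equal, and they are not equal since $d_{X'}(u',w') = 2$. So $d_X(u,w) = 2$. Given two such pairs in $X'$, their lifts are distance-$2$ pairs in $X$, so some element of $G$ maps one lift to the other, and its image in $G'$ maps the corresponding pairs in $X'$.

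It remains to check that $X'$ is not complete, so that the notion of $2$-distance-transitivity applies. If $X'$ were complete, the hypotheses of the lemma (with $Y$ a quotient and the ambient setting) would lead to a degenerate case. In that situation one would instead take $X'$ to be $X$ quotiented by a smaller index subgroup. I expect that in the intended use $Y$ is itself non-complete or the distance-$2$ set is nonempty, so this is only a minor check.
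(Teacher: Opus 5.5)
The paper gives no proof of this lemma (it calls it well known), so there is nothing to compare step by step. Your reduction is the standard argument it has in mind: quotient $X$ by the unique index-$p$ subgroup $M$ of the cyclic covering group $K$, which is characteristic in $K$ and hence normal in $G$. Your checks of the covering structure, of faithfulness, and of transitivity on arcs and on distance-$2$ pairs are correct; the last one works by lifting a $2$-path of $X'$ to a $2$-path $u\sim v\sim w$ of $X$ with $d_X(u,w)=2$.

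The one step you leave open is that $X'$ is not complete, which the paper's definition of $(G,2)$-distance-transitivity requires. What you write there is wrong on two counts.
\begin{itemize}
\item Your expectation that $Y$ is non-complete in the intended use is false. The lemma is invoked in Lemma~\ref{complete-1} exactly when $Y=\Gamma_N\cong \mathrm{K}_r$.
\item Your fallback of quotienting by a smaller-index subgroup would leave a covering group whose order is not prime, so it defeats the conclusion of the lemma.
\end{itemize}

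No fallback is needed. $X'\to Y$ is a $p$-fold covering with $p\ge 2$, and each fibre is an independent set, since an edge inside a fibre would project to a loop of the simple graph $Y$. So $X'$ has two distinct non-adjacent vertices and is not complete. Because $X'$ is connected, a shortest path between two such vertices gives a pair at distance exactly $2$, and your lifting argument then applies.

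With that sentence added, the proof is complete. If the hypothesis does not already place $K$ inside $G$, replace $G$ by $GK$ first.
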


Let $\Gamma$ be a graph and let $G \leqslant \operatorname{Aut}(\Gamma)$ act transitively on $V(\Gamma)$. Suppose $\mathcal{B} = \{B_1, B_2, \dots, B_n\}$ is a partition of $V(\Gamma)$. The \emph{quotient graph} of $\Gamma$ with respect to $\mathcal{B}$, denoted $\Gamma_{\mathcal{B}}$, is defined as the graph with vertex set $\mathcal{B}$ where $\{B_i, B_j\}$ is an edge if and only if there exist $x \in B_i$ and $y \in B_j$ such that $\{x, y\} \in E(\Gamma)$.

We say that $\Gamma_{\mathcal{B}}$ is \emph{non-trivial} if $1 < |\mathcal{B}| < |V(\Gamma)|$. If for every pair of adjacent blocks $B_i, B_j$ and every $x \in B_i$ we have $|\Gamma(x) \cap B_j| = 1$, then $\Gamma$ is called a \emph{cover} of $\Gamma_{\mathcal{B}}$.

If $\mathcal{B}$ is $G$-invariant, then $G$ induces an action on $\Gamma_{\mathcal{B}}$ as a subgroup of $\operatorname{Aut}(\Gamma_{\mathcal{B}})$. When the blocks of $\mathcal{B}$ are the orbits of a non-trivial normal subgroup $N$ of $G$, we write $\Gamma_{\mathcal{B}} = \Gamma_N$ and refer to it as the \emph{$G$-normal quotient} of $\Gamma$. If, in addition, $\Gamma$ is a cover of $\Gamma_{\mathcal{B}}$, then $\Gamma$ is called a \emph{$G$-normal cover} of $\Gamma_{\mathcal{B}}$. Suppose that $\Gamma$ is a cover of $\Gamma_{\mathcal{B}}$. If the kernel of the induced $G$-action on $\mathcal{B}$ is cyclic (respectively metacyclic), then $\Gamma$ is called a \emph{cyclic cover} (respectively \emph{metacyclic cover}) of $\Gamma_{\mathcal{B}}$.

\begin{lemma}[{\cite[Lemma 2.5]{LiPan2008}}]\label{LiPan2008}
Let \(\Gamma\) be a connected \(G\)-arc-transitive graph. Suppose that \(N\) is a normal subgroup of \(G\) such that \(\Gamma\) is an \(N\)-cover of the normal quotient graph \(\Gamma_N\). Then \(\Gamma\) is \((G,2)\)-arc-transitive if and only if \(\Gamma_N\) is \((G/N,2)\)-arc-transitive.
\end{lemma}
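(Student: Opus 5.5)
The plan is to reduce both sides of the equivalence to a local condition. A $G$-arc-transitive graph is $(G,2)$-arc-transitive if and only if, for one (hence every) vertex $v$, the stabilizer $G_v$ acts $2$-transitively on the neighbourhood $\Gamma(v)$. This holds because $G$ is transitive on arcs, and a $2$-arc $(u,v,w)$ is determined by the middle vertex $v$ together with the ordered pair $(u,w)$ of distinct neighbours of $v$. I will show that $\Gamma_N$ inherits arc-transitivity from $\Gamma$, and that the local actions $G_v^{\Gamma(v)}$ and $(G/N)_B^{\Gamma_N(B)}$ are permutationally isomorphic, where $B=v^N$. The equivalence then follows at once.

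First I record what the $N$-cover hypothesis gives. The orbits of $N$ form a $G$-invariant partition, so $G/N$ acts on $\Gamma_N$. This action is vertex-transitive, and it is arc-transitive because every edge of $\Gamma_N$ lifts to an edge of $\Gamma$. Since $\Gamma$ is a cover of $\Gamma_N$, each vertex $u\in\Gamma(v)$ lies in a different $N$-orbit, so the map $\pi:\Gamma(v)\to\Gamma_N(B)$, $u\mapsto u^N$, is injective. It is surjective because adjacent blocks induce a perfect matching, and in particular $|\Gamma(x)\cap B'|=1$ for every block $B'$ adjacent to $B$. Hence $\pi$ is a bijection. Being an $N$-cover also means $N$ is semiregular, so $G_v\cap N=1$. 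Since $N$ is transitive on $B$, a Frattini-type argument gives $G_B=G_vN$, and therefore
\[(G/N)_B = G_vN/N\cong G_v/(G_v\cap N)\cong G_v .\]

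Next I check that $\pi$ is equivariant for the isomorphism $g\mapsto gN$ from $G_v$ to $(G/N)_B$: indeed $(u^g)^N=(u^N)^{gN}$, because $N$ is normal in $G$. Consequently the permutation group induced by $G_v$ on $\Gamma(v)$ and the one induced by $(G/N)_B$ on $\Gamma_N(B)$ are permutationally isomorphic via $\pi$. In particular, one is $2$-transitive if and only if the other is. Combined with the local criterion from the first paragraph, applied to $\Gamma$ with $G$ and to $\Gamma_N$ with $G/N$, this proves the lemma.

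The step needing the most care is the identification $G_B=G_vN$ together with the faithfulness $G_v\cap N=1$. This is where the cover hypothesis, namely that $N$ is semiregular and each vertex has exactly one neighbour in each adjacent block, is genuinely used. Without it, $\pi$ could fail to be a bijection, or $(G/N)_B$ could act differently from $G_v$. The local criterion for $2$-arc-transitivity is standard, and I will only sketch it.
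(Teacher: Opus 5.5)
The paper gives no proof of this lemma; it quotes it from Li and Pan and uses it as a black box. So the question is whether your argument stands on its own, and it does. Reducing both sides to the local actions $G_v^{\Gamma(v)}$ and $(G/N)_B^{\Gamma_N(B)}$, identifying $(G/N)_B=G_vN/N$ via $G_B=G_vN$, and transporting along the equivariant bijection $\pi\colon u\mapsto u^N$ is the standard proof of this fact.

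Two remarks about where the care is actually needed.

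First, $G_v\cap N=1$ plays no role in the equivalence. Every element of $(G/N)_B$ has the form $gN$ with $g\in G_v$, and $\pi(u^g)=\pi(u)^{gN}$. Hence the permutation group induced by $(G/N)_B$ on $\Gamma_N(B)$ is the $\pi$-conjugate of the one induced by $G_v$ on $\Gamma(v)$, whatever the kernel is. Semiregularity of $N$ does follow from the cover condition and connectedness: an element of $N_v$ fixes every block, so it fixes the unique neighbour in each adjacent block of every vertex it fixes, and hence all vertices. But you do not need it.

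Second, you use without saying so that no edge of $\Gamma$ lies inside an $N$-orbit. This is what gives $u^N\neq B$ for $u\in\Gamma(v)$, so that $\pi$ really maps into $\Gamma_N(B)$ and $\Gamma_N$ is loopless. It holds because the $N$-orbits form a $G$-invariant partition. By edge-transitivity, one edge inside an orbit would put every edge inside an orbit, and connectedness would then make $N$ transitive, i.e. $\Gamma_N=\mathrm{K}_1$. That degenerate situation is excluded by the notion of an $N$-cover, and in the paper the lemma is only applied with $N$ having at least three orbits. You should state this step explicitly.
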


A graph on $n$ vertices is called a \textit{circulant} if it admits an automorphism that is an $n$-cycle. Consequently, every circulant is a Cayley graph on  a cyclic group.

A graph is called a \textit{dihedrant} if it is a Cayley graph on  a dihedral group.
The classification of connected $2$-arc-transitive and $2$-distance-transitive Cayley graphs on dihedral groups are given in the following lemmas.

\begin{lemma}[{\cite[Theorem 1.2]{Du2008}}]\label{Du2008}
Let \( n \geq 3 \) and let \( X \) be a connected $2$-arc-transitive Cayley graph of a dihedral group of order \( 2n \). Then one of the following occurs:
\begin{itemize}
    \item [(1)]{\( X \)} is a basic graph and is isomorphic to one of the following graphs:
    \( C_{2n} \) (where \( n \) is a prime); \( \K_{2n} \); \( \K_{n,n} \); \( B(H_{11}) \) or \( B^{\prime}(H_{11}) \); \( B(\text{PG}(d,q)) \) or \\ \( B^{\prime}(\text{PG}(d,q)) \) (where \( n = \frac{q^d - 1}{q - 1} \), \( d \geq 2 \), and \( q \) is a prime power);
    \item [(2)]$X$ is not a basic graph and either $X$ is isomorphic to $\mathrm{K}_{n,n} - n\mathrm{K}_2$, or there exists an odd prime power $q$ such that $n = q + 1$ and $X$ is isomorphic to $\mathrm{K}_{q+1}^{2d}$, where $d$ is a divisor of $\frac{q-1}{2}$ if $q \equiv 1 \pmod{4}$, and a divisor of $q - 1$ if $q \equiv 3 \pmod{4}$, respectively.
\end{itemize}
\end{lemma}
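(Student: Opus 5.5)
This lemma is quoted from \cite[Theorem 1.2]{Du2008}. If I had to prove it directly, I would use Praeger's normal quotient reduction, as the rest of this paper does. Let $G=\operatorname{Aut}(X)$ and let $D\cong D_{2n}$ be the regular dihedral subgroup. Choose a normal subgroup $N\trianglelefteq G$ that is maximal subject to having at least three orbits on $V(X)$. By Praeger's theorem, $X$ is a $G$-normal cover of $X_N$, and $N$ is semiregular. Lemma~\ref{LiPan2008} then shows that $X_N$ is $(G/N,2)$-arc-transitive. By maximality of $N$, the group $G/N$ is quasiprimitive or bi-quasiprimitive on $V(X_N)$. Moreover, $DN/N$ is a transitive subgroup of $G/N$ and is a quotient of $D_{2n}$, hence cyclic or dihedral. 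Since $N$ acts semiregularly with orbits of size $|N|$ and $DN/N$ is regular of order $2n/|N|$, the group $N$ is cyclic or dihedral. The proof therefore splits into two parts: first classify the \emph{basic} graphs, where $N=1$, and then classify their covers.

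\textbf{Basic case, quasiprimitive.} Cyclic groups of composite order and dihedral groups are $B$-groups. So if $G$ is primitive it is $2$-transitive, and $X\cong \K_{2n}$, or $X\cong C_{2n}$ with $n$ prime in the degenerate case. To exclude quasiprimitive imprimitive actions, I would use the O'Nan--Scott type classification of quasiprimitive $2$-arc-transitive groups (Praeger), together with the known lists of quasiprimitive groups having a regular dihedral subgroup. In each case one must show that a regular dihedral subgroup is incompatible with a $2$-transitive local action.

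\textbf{Basic case, bi-quasiprimitive.} Here $X$ is bipartite, and the even subgroup $G^+$ fixes each part. The rotation subgroup $\mathbb{Z}_n<D$ lies in $G^+$ and is regular on each part. Hence the action of $G^+$ on a part is a permutation group with a regular cyclic subgroup, and it must be $2$-transitive by the $2$-arc condition together with the $B$-group property. The $2$-transitive groups with a regular cyclic subgroup are known (Burnside--Schur and its completion):
\begin{itemize}
\item $S_n$ or $A_n$ acting naturally;
\item the affine group on $\mathbb{Z}_p$;
\item $\mathrm{P\Gamma L}(d,q)$ on projective points, with $n=\frac{q^d-1}{q-1}$;
\item $\mathrm{PSL}(2,11)$ of degree $11$;
\item $M_{11}$ and $M_{23}$.
\end{itemize}
Reading off the orbital graphs between the two parts should give exactly $\K_{n,n}$, $\K_{n,n}-n\K_2$, $B(\mathrm{PG}(d,q))$, $B'(\mathrm{PG}(d,q))$, $B(H_{11})$, $B'(H_{11})$ and $C_{2p}$. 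I expect $\K_{n,n}-n\K_2$ to reappear later as a cover of $\K_n$, so it is not listed as basic in the statement.

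\textbf{Covers.} For $N\neq 1$, $X$ is a regular $N$-cover of a basic graph $Y$ from the list, and some $2$-arc-transitive group lifts along the covering. Using Lemma~\ref{|N|=p} and its dihedral analogue, I would first reduce to prime-order cyclic covering groups. I would then use the known classifications of arc-transitive elementary abelian or cyclic covers of $\K_m$, $\K_{m,m}$ and $B(\mathrm{PG})$, for instance the voltage-graph methods of Du--Marušič--Waller. The aim is to show that the only cases admitting both a lifted $2$-arc-transitive group and a regular dihedral subgroup are $\K_{n,n}-n\K_2$ (a $\mathbb{Z}_2$-cover of $\K_n$) and $\K_{q+1}^{2d}$ (a $\mathbb{Z}_d$-cover of $\K_{q+1,q+1}-(q+1)\K_2$, with base group $\mathrm{PGL}(2,q)$).

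\textbf{Main obstacle.} I expect this last step to be the hardest. The arithmetic condition $d\mid \frac{q-1}{2}$ or $d\mid q-1$, according to $q\bmod 4$, should come from checking when $\mathrm{PGL}(2,q)$ rather than only $\mathrm{PSL}(2,q)$ lifts, and when the lifted group contains a regular dihedral subgroup. I would handle this by computing voltages of closed walks on the fundamental cycles through $\infty$, in the spirit of the definition of $\K_{q+1}^{2d}$.
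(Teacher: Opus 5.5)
The paper does not prove this lemma; it is quoted verbatim from \cite{Du2008}. Your first sentence is therefore exactly what the paper does, and nothing more is needed here. Your sketch of a direct proof, however, breaks at the reduction step. You correctly note that $DN/N\cong D/(D\cap N)$ is transitive on $V(X_N)$. But you then claim it is regular of order $2n/|N|$, so that $X_N$ is a dihedrant and $N$ is cyclic or dihedral. Since $|DN/N|=|D|/|D\cap N|$ while $X_N$ has $|D|/|N|$ vertices, regularity holds if and only if $N\le D$, and this is not automatic. A counterexample is on your own target list. For $n\ge 3$ odd, take $X=\K_{n,n}-n\K_2=\Cay(D_{2n},\{sr^i\mid 1\le i\le n-1\})$. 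Then $\operatorname{Aut}(X)\cong S_n\times\mathbb{Z}_2$, and its only nontrivial normal subgroup with at least three orbits is the centre $N\cong\mathbb{Z}_2$, which swaps antipodal vertices. Since $N$ is central and $Z(D_{2n})=1$, we get $N\cap D=1$. Moreover $X_N\cong\K_n$ has odd order, so it is not a Cayley graph on any quotient of $D_{2n}$. The correct dichotomy is the dihedral analogue of the paper's Lemma~\ref{N < T}, whose proof carries over verbatim. Either $N$ lies in the rotation subgroup $\langle r\rangle$ and $X_N$ is a dihedrant, or $|N:N\cap\langle r\rangle|=2$ and $X_N$ is in general only a circulant; in that case $N$ is only known to have a cyclic subgroup of index $2$. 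The second branch needs the classification of $2$-arc-transitive circulants (Alspach--Conder--Maru\v{s}i\v{c}--Xu). It is exactly where $\K_{n,n}-n\K_2$ with $n$ odd comes from. Because your cover step only lifts from basic dihedrants, it would never produce this family.

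Your bi-quasiprimitive basic case has a similar alignment problem. You assume the rotation subgroup lies in $G^+$, i.e.\ that the bipartition is the pair of $\langle r\rangle$-orbits. For $n$ even, the bipartition can instead be the coset partition of $\langle r^2,s\rangle$ or $\langle r^2,rs\rangle\cong D_n$; for instance $\K_{n,n}=\Cay(D_{2n},D_{2n}\setminus\langle r^2,s\rangle)$. In that case the regular subgroup of $G^+$ on a part is dihedral rather than cyclic, so the list of $2$-transitive groups with a regular cyclic subgroup does not apply. This is why the paper's Lemma~\ref{bi-quasiprimitive} invokes two separate results of Devillers--Giudici--Jin. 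You also give no reason why $G^+$ should be primitive on a part, and without primitivity the $B$-group property does not yield $2$-transitivity. Finally, two steps are only announced and not carried out: the exclusion of imprimitive quasiprimitive actions, and the whole cover analysis, including the arithmetic condition on $d$. What you have is a plan for reproving \cite{Du2008} rather than a proof.
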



The following proposition determines all the vertex quasiprimitive arc-transitive bicirculants.

\begin{lemma}[{\cite[Proposition 4.2]{DevillersGiudiciJin2022}}]\label{DevillersGiudiciJin2022}
Let $\Gamma$ be a connected $G$-arc-transitive bicirculant over the cyclic subgroup $H$ of order $n$ such that $G$ is quasiprimitive on $V(\Gamma)$. Then one of the following holds:
\begin{itemize}
    \item [(1)] $G$ is primitive on $V(\Gamma)$ and $\Gamma$ is one of the following graphs:
        \begin{itemize}
            \item[(1.1)] $\K_{2n}$;
            \item[(1.2)] Petersen graph or its complement, and $A_5 \leq G \leq S_5$;
            \item[(1.3)] $H(2, 4)$ or its complement, and $G$ is a rank $3$ subgroup of $\operatorname{AGL}(4, 2)$;
            \item[(1.4)] Clebsch graph or its complement, and $G$ is a rank $3$ subgroup of $\operatorname{AGL}(4, 2)$.
        \end{itemize}
    \item [(2)] $G$ is not primitive on $V(\Gamma)$ and $\Gamma$ is one of the following graphs:
        \begin{itemize}
            \item[(2.1)] $\K_{n[2]}$ and $G$ has rank $3$ on vertices;
            \item[(2.2)] $\K_{n,n} - n\K_2$ with $\operatorname{PGL}(d, q) \leq G \leq \operatorname{P\Gamma L}(d, q)$ and $n = (q^d - 1)/(q - 1)$.
        \end{itemize}
\end{itemize}
\end{lemma}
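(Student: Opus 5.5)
The plan is to combine the structure theory of quasiprimitive groups with the known lists of primitive groups containing a cyclic subgroup with at most two orbits. Throughout, fix $H=\langle h\rangle\le G$ cyclic of order $n$, semiregular with two orbits $\Delta_0,\Delta_1$ on $V(\Gamma)$, so $|V(\Gamma)|=2n$. The argument splits according to whether $G$ is primitive on $V(\Gamma)$ or only quasiprimitive.

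\emph{Primitive case.} A primitive group containing an element with exactly two cycles, each of length $n$, falls into a known classification (M\"uller's list of primitive groups with a bicyclic element, building on the Feit--Jones classification for a single cycle). I would run through that list: affine groups of degree $2n\in\{4,8,16\}$, almost simple groups (alternating and symmetric, $\mathrm{PSL}(d,q)$ on points, a few sporadic and small exceptional actions), and product-action groups such as $S_5\wr S_2$. For each group I would list its orbital graphs and keep only the connected, undirected, arc-transitive ones.
\begin{itemize}
\item If $G$ is $2$-transitive, the only possibility is $\K_{2n}$.
\item If $G$ has rank $3$, the candidates are the Petersen graph and its complement ($A_5\le G\le S_5$ on pairs, $2n=10$), and, in degree $16$, $H(2,4)$ and the Clebsch graph together with their complements, for rank $3$ subgroups of $\operatorname{AGL}(4,2)$.
\item For groups of rank at least $4$, I would argue that no connected self-paired orbital survives, or that the $H$-orbit condition fails. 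This uses the suborbit lengths compared with $n$, and the fact that $h$ must act with two cycles of equal length.
\end{itemize}
This gives (1.1)--(1.4).

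\emph{Imprimitive case.} Let $\mathcal{B}$ be a maximal $G$-invariant block system with blocks of size $k$, where $1<k<2n$. Quasiprimitivity makes the kernel of $G$ on $\mathcal{B}$ trivial, so $G\cong G^{\mathcal{B}}$ is primitive on $\mathcal{B}$. The subgroup $H^{\mathcal{B}}$ has at most two orbits on $\mathcal{B}$, so $G^{\mathcal{B}}$ is again a primitive group containing a cyclic subgroup with at most two orbits.

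I would next show that $k=2$. The faithful action on $\mathcal{B}$ forces $|G_B|$ to be large, while the stabilizer $G_B$ acts on the block $B$ of size $k$. Comparing $|G|$ from the two actions, using the Feit--Jones and M\"uller lists, rules out $k>2$.

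With $k=2$, the quotient graph $\Gamma_{\mathcal{B}}$ is $G$-arc-transitive on $n$ vertices.
\begin{itemize}
\item If $\Gamma_{\mathcal{B}}=\K_n$ and the two vertices of each block are non-adjacent with all other edges present, then $\Gamma=\K_{n[2]}$ and $G$ has rank $3$. This is (2.1).
\item Otherwise $\Gamma$ is a $2$-cover of $\Gamma_{\mathcal{B}}$. Here $G^{\mathcal{B}}$ is $2$-transitive with a cyclic regular or bicyclic subgroup. I would check which such groups admit a faithful transitive action of degree $2n$ with blocks of size $2$ and a connected orbital graph. The point stabilizer must be an index-$2$ subgroup of a block stabilizer, and one needs a fixed-point-free involution pairing the points. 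This leads to $\operatorname{PGL}(d,q)\le G\le \operatorname{P\Gamma L}(d,q)$ with $n=(q^d-1)/(q-1)$, acting so that $\Gamma=\K_{n,n}-n\K_2$, which is (2.2).
\item The remaining $2$-transitive candidates, namely $A_n$, $S_n$, the small $\mathrm{PSL}(2,11)$, $M_{11}$ and $M_{23}$ cases, and affine cases, would be eliminated by checking their index-$2$ subgroups of point stabilizers directly.
\end{itemize}

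The main obstacle is the imprimitive case, and in particular two steps: bounding the block size $k$ to $2$, and identifying which $2$-transitive groups on $n$ points lift to a quasiprimitive action on $2n$ points with a connected arc-transitive graph. For the second step, I would first try to use that the lifted action is $G$ acting on the cosets of a subgroup of index $2$ in $G_B$. That reduces the problem to listing index-$2$ subgroups of point stabilizers in $2$-transitive groups having a cyclic regular or bicyclic subgroup.
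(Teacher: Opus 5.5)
The paper gives no argument for this statement; it is quoted from [DevillersGiudiciJin2022, Proposition 4.2], so your proposal has to be judged on its own. The primitive half is fine in outline: it is a look-up in M\"uller's classification. One correction there: every nontrivial orbital graph of a primitive group is connected. So what you need is that M\"uller's list has no primitive group of rank at least $4$ with an element having two cycles of equal length; connectivity plays no role.

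The imprimitive half has genuine gaps. First, your route to $k=2$ (``comparing $|G|$ from the two actions'') is not an argument. The missing idea is elementary: $G$, hence $H$, acts faithfully on $\mathcal{B}$, and $H$ is cyclic.
If $H$ had two orbits on $\mathcal{B}$, each $H$-orbit on vertices would be a union of $n/k$ blocks permuted transitively by the abelian group $H$. The kernel of $H$ on each of these two sets of blocks is then the unique subgroup of $H$ of order $k$. That subgroup fixes every block, contradicting faithfulness.
So $H$ is transitive, hence regular, on $\mathcal{B}$, which gives $|\mathcal{B}|=n$ and $k=2$.

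The same observation supplies what you only assert next, namely that $G^{\mathcal{B}}$ is $2$-transitive. By Burnside--Schur, a primitive group with a regular cyclic subgroup is $2$-transitive or lies in $\operatorname{AGL}(1,p)$. Every affine case is excluded, because its socle would be regular on the $n$ blocks and so intransitive on the $2n$ vertices. Hence $\Gamma_{\mathcal{B}}\cong\K_n$. Without this step your dichotomy ``$\K_{n[2]}$ or a $2$-cover'' is incomplete. It omits $\Sigma[\overline{\K_2}]$ and $2$-covers of a non-complete quotient $\Sigma$, which your weaker hypothesis (that $H^{\mathcal{B}}$ may have two orbits) would allow, e.g.\ over a Petersen quotient.

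Second, your cover branch ends at a conclusion that cannot occur. $\K_{n,n}-n\K_2$ with $n\ge 3$ is connected and bipartite. For any vertex-transitive $G$, the subgroup fixing both parts is an index-$2$ normal subgroup with two orbits, so $G$ is never quasiprimitive. Alternative (2.2) of the statement is therefore vacuous. That is harmless for a ``one of the following holds'' statement, but it means you must prove the double-cover branch is empty, not that it yields (2.2).

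The missing tool is a two-graph. The triples of blocks whose lifted triangle closes into a hexagon form a $G$-invariant two-graph on $\mathcal{B}$. Consider the almost simple $2$-transitive groups with a regular cyclic subgroup (Feit--Jones):
\begin{enumerate}
\item $A_n$, $S_n$, $\operatorname{PSL}(2,11)$ and $M_{23}$ drop out, since no index-$2$ subgroup of the block stabiliser gives a transitive socle.
\item $M_{11}$ and $G\ge\operatorname{PGL}(2,q)$ survive and are $3$-homogeneous on blocks.
\item $G\ge\operatorname{PGL}(d,q)$ with $d\ge 3$ survives; its orbits on triples are the collinear and the non-collinear ones. Neither is a two-graph: a line with three points plus one point off it contains exactly one collinear and three non-collinear triples.
\end{enumerate}
Hence the two-graph is trivial, and the cover is $2\K_n$ or $\K_{n,n}-n\K_2$, both incompatible with quasiprimitivity.

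Your index-$2$ check alone also fails in two places. $M_{11}$ acting on the cosets of $A_6<M_{10}$ is a genuine quasiprimitive imprimitive group of degree $22$ containing a semiregular $\mathbb{Z}_{11}$, so that check does not remove it. The $\operatorname{PGL}(d,q)$ groups do occur, for instance with $d$ and $q$ odd and vertex stabiliser the square-determinant subgroup of index $2$ in a point stabiliser. But they give $\Gamma\cong\K_{n[2]}$, i.e.\ case (2.1), not a cover.
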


A graph $\Gamma$ is said to be \textit{locally $(G, s)$-distance-transitive} with $s \geq 1$ if, for every vertex $u \in V(\Gamma)$, the vertex stabilizer $G_u$ acts transitively on the set $\Gamma_i(u)$ for all integers $i \leq s$. The following lemma provides a reduction method for the study of locally $(G, s)$-distance-transitive graphs.

\begin{lemma}[{\cite[Lemma 5.3]{DevillersGiudiciLiPraeger2012}}]\label{DevillersGiudiciLiPraeger2012} Let \(\Gamma\) be a connected locally \((G, s)\)-distance-transitive graph with \(s \geq 2\). Let \(1 \neq N \triangleleft G\) be intransitive on \(V(\Gamma)\), and let \(\mathcal{B}\) be the set of \(N\)-orbits on \(V(\Gamma)\). Then one of the following holds:

\begin{itemize}
\item[(i)] \(|\mathcal{B}| = 2\).

\item[(ii)] \(\Gamma\) is bipartite, \(\Gamma_N \cong K_{1,r}\) with \(r \geq 2\) and \(G\) is intransitive on \(V(\Gamma)\).

\item[(iii)] \(s = 2\), \(\Gamma \cong K_{m[b]}\), \(\Gamma_N \cong K_m\) with \(m \geq 3\) and \(b \geq 2\).

\item[(iv)] \(N\) is semiregular on \(V(\Gamma)\), \(\Gamma\) is a cover of \(\Gamma_N\), \(|V(\Gamma_N)| < |V(\Gamma)|\) and \(\Gamma_N\) is \((G/N, s')\)-distance-transitive where \(s' = \min\{s, \operatorname{diam}(\Gamma_N)\}\).
\end{itemize}
\end{lemma}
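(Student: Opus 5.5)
The statement immediately above is Lemma~\ref{DevillersGiudiciLiPraeger2012}, quoted from \cite{DevillersGiudiciLiPraeger2012}. It would normally be cited, but here is how I would prove it directly. The tool is the standard normal quotient analysis. Let $\mathcal{B}$ be the set of $N$-orbits; these form a $G$-invariant partition because $N\trianglelefteq G$. Since $\Gamma$ is connected and $N\neq 1$ is intransitive, $\Gamma_N$ is connected and $|\mathcal{B}|\ge 2$. If $|\mathcal{B}|=2$ we are in (i), so assume $|\mathcal{B}|\ge 3$.

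\emph{Step 1: edges inside a block.} First I would show that either there are no edges inside an $N$-orbit, or we are in case (iii).
\begin{itemize}
\item Suppose some edge $\{u,v\}$ lies inside a block $B$. By local transitivity of $G_u$ on $\Gamma(u)$, together with $G_u\le G_{\{B\}}$, every neighbour of $u$ lies in $B$.
\item Connectivity then forces $V(\Gamma)=B$, which contradicts $|\mathcal{B}|\ge 2$.
\end{itemize}
So each block is an independent set. If $u$ has two neighbours in the same block $B'$, consider the pairs at distance $2$ from $u$. Local $2$-distance transitivity, together with $G_u$ preserving $\mathcal{B}$, shows that either all vertices of $\Gamma_2(u)$ lie in blocks containing a neighbour of $u$, or none do.

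\emph{Step 2: the multipartite and star cases.} I would count edges between adjacent blocks $B,C$. The subgraph $\Gamma[B,C]$ is biregular, since $N$ is transitive on each orbit. If $|\Gamma(u)\cap C|\ge 2$ for some $u\in B$, I would use transitivity of $G_u$ on $\Gamma_2(u)$ and on $\Gamma(u)$ to show that $u$ is adjacent to all of $C$. This yields two possibilities:
\begin{itemize}
\item $\Gamma_N$ is a complete graph $K_m$ and $\Gamma\cong K_{m[b]}$. Here $s\ge3$ would force diameter at least $3$, which is impossible for $K_{m[b]}$, so $s=2$: this is (iii).
\item If $G$ is intransitive (locally $s$-distance-transitive graphs need not be vertex-transitive), the two $G$-orbits form a bipartition. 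The only remaining possibility with $|\mathcal{B}|\ge3$ is that one part is a single block, giving $\Gamma_N\cong K_{1,r}$: this is (ii).
\end{itemize}

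\emph{Step 3: the cover case.} Otherwise $|\Gamma(u)\cap C|=1$ for all adjacent blocks, so $\Gamma$ is a cover of $\Gamma_N$. Then $N_u$ fixes $\Gamma(u)$ pointwise, and connectivity gives $N_u=1$, so $N$ is semiregular. Local $s'$-distance transitivity of $G/N$ on $\Gamma_N$ follows by projecting vertices at distance $i\le s'$, using that paths lift uniquely. This gives (iv).

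The main obstacle is Step 2: excluding the mixed configurations where vertices have between $2$ and $|C|-1$ neighbours in an adjacent block $C$. For this I would use the distance-$2$ transitivity argument on pairs $(u,w)$ with $w\in C\setminus\Gamma(u)$.
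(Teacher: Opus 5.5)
The paper gives no proof of this statement; it only quotes Lemma~5.3 of Devillers, Giudici, Li and Praeger, so your argument has to stand on its own. Steps~1 and~3 are sound. (In (iv) you obtain the local version, which is all one can expect when $G$ is intransitive.) Step~2, however, has a genuine gap, and the intermediate claim you aim for there is false.

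\emph{A counterexample to your Step~2 claim.} Let $\Gamma$ be the point--line incidence graph of $\mathrm{AG}(2,q)$, let $G=\mathrm{AGL}(2,q)$, and let $N$ be the translation subgroup. Then:
\begin{itemize}
\item[(a)] $\Gamma$ is locally $(G,2)$-distance-transitive;
\item[(b)] the $N$-orbits are the point set $C$ and the $q+1$ parallel classes, so $\Gamma_N\cong K_{1,q+1}$;
\item[(c)] a line $u$ has $q\ge 2$ neighbours in $C$ but is not adjacent to all $q^2$ points.
\end{itemize}
So ``$|\Gamma(u)\cap C|\ge 2$ forces $u$ to be adjacent to all of $C$'' cannot be proved. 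Your suggested tool does not help either. A vertex $w\in C\setminus\Gamma(u)$ need not lie in $\Gamma_2(u)$ (here $d_\Gamma(u,w)=3$), so transitivity of $G_u$ on $\Gamma_2(u)$ says nothing about $w$.

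Consequently neither bullet of Step~2 is justified. Nothing shows that one part is a single block when $G$ is intransitive. In the transitive case, complete joins between adjacent blocks would not by themselves make $\Gamma_N$ complete.

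\emph{The missing idea} is to apply $2$-distance transitivity at a vertex of $C$ rather than at $u$. If $v,w\in\Gamma(u)\cap C$, then $w\in\Gamma_2(v)$ lies in the block of $v$. Since $G_v$ fixes $C$ setwise and is transitive on $\Gamma_2(v)$, we get $\Gamma_2(v)\subseteq C$. As $N$ is transitive on $C$, this holds for every vertex of $C$. By induction, every vertex at even distance from a vertex $c\in C$ lies in $C$.

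\emph{Intransitive case.} Here $\Gamma$ is bipartite with the two $G$-orbits as parts. So $C$ is an entire part, the other part splits into at least two blocks, and $\Gamma_N$ is a star. This gives (ii).

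\emph{Vertex-transitive case.} Every vertex $x$ satisfies $\Gamma_2(x)\subseteq B(x)$, where $B(x)$ is its block. Equivalently, two vertices in distinct blocks at distance at most $2$ are adjacent. Suppose $z,z_1,z_2,y$ were a geodesic, so $z_2\in B(z)$ and $y\in B(z_1)$. Since $\Gamma_N$ is connected with at least three vertices, some third block $E$ is adjacent to $B(z)$ or to $B(z_1)$. Then $z_2$ or $z_1$ has a neighbour $e\in E$, and repeated use of the rule above gives $e\sim z$ and $e\sim y$, a contradiction. Hence $\operatorname{diam}(\Gamma)=2$ and $\Gamma_2(z)=B(z)\setminus\{z\}$ for every $z$. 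So $\Gamma\cong K_{m[b]}$ with the blocks as parts, $\Gamma_N\cong K_m$ and $s=2$, which is (iii).
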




\begin{lemma}
    The graph \( B(\operatorname{PG}(d-1,q)) \), where \( d \geq 3 \), \( q \) is a prime power with $\frac{q^{d}-1}{q-1} = 4m$ is not a  Cayley graph of the group ${SD}_{8m} = \langle a, b \mid a^{4m} = b^2 = 1,\ a^b = a^{2m-1} \rangle,\ m \geq 3$.
\end{lemma}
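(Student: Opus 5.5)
The plan is to argue by contradiction. Suppose $\Gamma=B(\operatorname{PG}(d-1,q))$ with $n:=\frac{q^d-1}{q-1}=4m$ admits a regular subgroup $T\cong \mathrm{SD}_{8m}$ of $A:=\operatorname{Aut}(\Gamma)=\operatorname{P\Gamma L}(d,q):\mathbb{Z}_2$, where the outer $\mathbb{Z}_2$ is induced by a polarity and swaps the point class $\mathcal P$ with the hyperplane class $\mathcal H$. Put $H:=T\cap \operatorname{P\Gamma L}(d,q)$. Since $T$ is transitive on $V(\Gamma)$, it must swap the two parts, so $|T:H|=2$ and $H$ acts regularly on both $\mathcal P$ and $\mathcal H$. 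Every element of $T\setminus H$ is a correlation.

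First, I would list the three index-$2$ subgroups of $T$. They are $\langle a\rangle\cong\mathbb{Z}_{4m}$, $\langle a^2,b\rangle\cong D_{4m}$ (since $a^{2b}=a^{2(2m-1)}=a^{-2}$), and $\langle a^2,ab\rangle$, which is dicyclic of order $4m$ (since $(ab)^2=a^{2m}$). So $H$ is one of these three groups.

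Second, I would pin down $H$ inside $\operatorname{P\Gamma L}(d,q)$ using Kantor's classification of point-transitive subgroups of $\operatorname{P\Gamma L}(d,q)$ for $d\ge 3$. Such a subgroup contains $\operatorname{PSL}(d,q)$, or lies in the Singer normalizer $N=\langle g\rangle:\langle\phi\rangle\cong \mathbb{Z}_n:\mathbb{Z}_{df}$ with $q=p^f$, or $(d,q)=(5,2)$. The first option has order far larger than $n$, so it is impossible for regular $H$. The last has $n=31$, which is odd. Hence $H\le N$ and $|H|=n$. Then $H\cap\langle g\rangle$ has index dividing $df$ in $H$, and $\phi$ acts on $\langle g\rangle$ by $x\mapsto x^{p}$. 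This lets me determine in each of the three cases what $H\cap\langle g\rangle$ and $H\langle g\rangle/\langle g\rangle$ can be. In particular:
\begin{itemize}
\item If $H=\langle a\rangle$ is cyclic, it should be forced to equal the Singer cycle $\langle g\rangle$, up to a small semilinear twist which I would rule out by regularity.
\item If $H$ is dihedral or dicyclic, then $a^2$ should be forced into $\langle g\rangle$. The elements $b$ or $ab$ then act on $\langle a^2\rangle$ by inversion, which must coincide with a field automorphism $x\mapsto x^{p^i}$. This gives a congruence $p^i\equiv -1 \pmod{2m}$ (or modulo the relevant order), which constrains $d,q$.
\end{itemize}

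Third, I would bring in the correlations in $T\setminus H$. A standard polarity $\tau$ normalizes $N$ and acts on $\langle g\rangle$ as $x\mapsto x^{-1}$, so every correlation normalizing $\langle g\rangle$ induces $x\mapsto x^{-p^i}$ on it. In the cyclic case the relation $a^b=a^{2m-1}$ then yields $-p^i\equiv 2m-1\pmod{4m}$, i.e.\ $p^i\equiv 2m+1\pmod{4m}$. In the other two cases the element $a\notin H$ is a correlation with $a^2\in H$ of order $2m$ and $a$ of order $4m$, so $a^2=a\cdot a$ is a product of two correlations that must be centralized by $a$. Combining this with the second step should give an incompatible system of congruences modulo $4m$, using $n\equiv d\pmod{q-1}$ and $4\mid n$.

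The main obstacle is this final step. Both $B$ and $B'$ have the same automorphism group, and both occur in Theorem~\ref{thm1.1}(4). So purely group-theoretic congruences may fail to separate them, and the argument must eventually use the adjacency of $B$ itself. If the congruences do not close the argument, my fallback is as follows. Write $\Gamma=\operatorname{Cay}(T,S)$ with $S\subseteq T\setminus H$ and $|S|=\frac{q^{d-1}-1}{q-1}$. Incidence in $B$ forces two distinct points to have exactly $\frac{q^{d-2}-1}{q-1}$ common neighbours, which translates into a difference-set condition on $S\subseteq aH$ or $bH$. I would then show that the twisted multiplication in $\mathrm{SD}_{8m}$ (via $a^b=a^{2m-1}$) is incompatible with the Singer difference set being the image of $S$, whereas the complementary set, which is the one relevant to $B'$, survives.
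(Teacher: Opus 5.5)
Your skeleton is the right one: the three index-$2$ subgroups $\langle a\rangle$, $\langle a^2,b\rangle\cong D_{4m}$ and $\langle a^2,ab\rangle\cong Q_{4m}$, and the fact that correlations normalizing a Singer cycle act as $x\mapsto x^{-p^i}$. It is more careful than the paper's proof, which only treats $T\cap\operatorname{P\Gamma L}(d,q)=\langle a\rangle$ and simply asserts that a correlation inverts $a$. That assertion is false in general: with $d=2e$, $\tau\phi^{ef}$ acts as $x\mapsto x^{-q^e}$, which is exactly the element the paper itself uses for $\Gamma(d,q,r)$.

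\textbf{Step~2 rests on a theorem that does not exist in the form you quote.} Point-transitive subgroups of $\operatorname{P\Gamma L}(d,q)$ are not confined to your three types; for example $\operatorname{PSp}(d,q)$ and the image of $\Gamma L(d/s,q^s)$ are point-transitive. Kantor's theorem concerns subgroups \emph{containing a Singer cycle}, and $D_{4m}$ and $Q_{4m}$ contain none. A correct substitute runs as follows.
\begin{itemize}
\item $4\mid n$ forces $q$ odd and $d=2e$ even.
\item The preimage of $H$ in $\Gamma L(d,q)$ has order $q^d-1$ and is regular on $V\setminus\{0\}$.
\item Let $C=\langle a\rangle$ or $\langle a^2\rangle$ and $C_0=C\cap\operatorname{PGL}(d,q)$. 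The preimage of $C_0$ is abelian (a central extension of a cyclic group by the scalars) and semiregular. Its order is at least $(q^d-1)/(2f)>q^{d-1}-1$, so it is irreducible.
\item Hence it generates a field of order $q^d$ and lies in a unique Singer cycle $\langle g\rangle$.
\item Since $C_0\trianglelefteq T$, every element of $T$, correlations included, normalizes $\langle g\rangle$. So $T\le\hat N=\langle g\rangle:(\langle\phi\rangle\times\langle\tau\rangle)$, with $\tau$ the trace-form polarity. You use this in Step~3 but never justify it.
\end{itemize}

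\textbf{Step~3 is where the proof actually lives, and it is not carried out.} Also, $a^2$ is not forced into $\langle g\rangle$ a priori. Here is what closes it.
\begin{itemize}
\item Write $n=(q^e+1)s$ with $s=(q^e-1)/(q-1)\ge q+1$. Then $\gcd(n,q^e-1)=2s$ and $q^e+1\mid n$.
\item As $\hat N/\langle g\rangle\cong\mathbb{Z}_{df}\times\mathbb{Z}_2$ is abelian, $K:=T\cap\langle g\rangle\ge T'$, and $T'$ is $\langle a^2\rangle$ or $\langle a^4\rangle$. So $K\in\{\langle a\rangle,\langle a^2\rangle,\langle a^4\rangle\}$. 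Any Frobenius part of order $2$ is $\phi^{ef}\colon x\mapsto x^{q^e}$.
\item If $K=\langle a\rangle$: $b^2=1$ forces $b\in\langle g\rangle\langle\phi^{ef}\rangle\tau$, so $a^b\in\{a^{-1},a^{-q^e}\}$. Then $a^b=a^{2m-1}$ gives $q^e\equiv 2m+1\pmod{4m}$, hence $2s=n/2$ and $q^e=3$, a contradiction.
\item If $K=\langle a^2\rangle$: $a^2$ lies in $\langle g^{q^e+1}\rangle$ or $\langle g^{q^e-1}\rangle$, of orders $s$ and $(q^e+1)/2$. Both are less than $2m$.
\item If $K=\langle a^4\rangle$ ($m$ odd), the dihedral case dies because $H/K\cong\mathbb{Z}_2^2$ would embed in the cyclic $\mathbb{Z}_{df}$. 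Otherwise $H/K\cong\mathbb{Z}_4$ is generated by some $g^u\phi^{\pm df/4}$. This element acts on $K=\langle g^4\rangle$ as $\pm1$, because $a$ centralizes $a^4$ and $b$ inverts it. Hence $q^e\equiv 1\pmod m$.
\item In that last case the odd part of $q^e+1$ divides $\gcd(q^e+1,q^e-1)=2$, so $q^e+1$ is a power of $2$. That is impossible for $q$ odd and $e\ge 2$.
\end{itemize}

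\textbf{Your fallback cannot work in principle.} $B$ and $B'$ are connected bipartite graphs on the same vertex set with the same bipartition, so $\operatorname{Aut}(B)=\operatorname{Aut}(B')$ as permutation groups. Being a Cayley graph of $T$ depends only on having a regular subgroup isomorphic to $T$. Concretely, $B=\Cay(T,S)$ if and only if $B'=\Cay(T,(T\setminus H)\setminus S)$. So no difference-set condition can separate them: the lemma is purely group-theoretic. Once proved, it excludes $B'$ as well, so both appearing in Theorem~\ref{thm1.1}(4) is no evidence that group theory is insufficient.
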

\begin{proof}
  $B(\operatorname{PG}(d-1,q))$ is the bipartite graph with vertices the $1$-dimensional and $(d-1)$-dimensional subspaces of a $d$-dimensional vector space over $\operatorname{GF}(q)$, and two subspaces are adjacent if and only if one is contained in the other. Then by \cite[p. 210]{ChengOxley1987}, the graph has the automorphism group $\operatorname{P\Gamma L}(d,q) : \mathbb{Z}_2$. We next prove that ${SD}_{8m}$ cannot be embedded in $\operatorname{P\Gamma L}(d,q) : \mathbb{Z}_2$.

  We proceed by contradiction. Assume that ${SD}_{8m} \leq \operatorname{P\Gamma L}(d,q) : \mathbb{Z}_2$, then $a \in \operatorname{P\Gamma L}(d,q)$ ia a Singer cycle of order $4m$. If $b \notin \operatorname{P\Gamma L}(d,q)$, then $a^b = a^{-1}$. This is a contradiction. We next consider the case $b \in \operatorname{P\Gamma L}(d,q)$.
 By \cite[p. 187, Satz 7.3]{Hup67}, the normalizer \(N_{\operatorname{P\Gamma L}(d,q)}(\langle a\rangle)\) takes the form \(\langle a\rangle:\langle \delta\rangle\), where \(\langle \delta\rangle\cong\mathbb{Z}_d\) and \(a^\delta=a^q\). As \(a^b=a^{2m-1}\),  we have \(\langle a,b\rangle\le \langle a,\delta\rangle\), thus there exists an integer \(r\) with \(0 < r < d\) such that \(a^{\delta^r}=a^{q^r}=a^b=a^{2m-1}\), which yields the congruence \(q^r\equiv 2m-1\pmod{4m}\). Furthermore, substituting \(4m=(q^d-1)/(q-1)\) gives \(2q^r = q^{d-1}+q^{d-2}+\cdots+q-1\). A straightforward computation shows that this cannot occur. Thus The graph \( B(\operatorname{PG}(d-1,q)) \) can not a  Cayley graph of the group ${SD}_{8m}$. \end{proof}

\begin{lemma}
    Let $X$ be the graph \( \Gamma(d,q,r) \) with order $8m$, where \( d \geq 2 \) is an even integer, \( q \) is a prime power and \( r \mid q-1 \). If there exists an integer $k$ satisfying $0<k<d$ such that $d\mid 2k$ and $q^k \equiv 2m+1 \pmod{4m}$, then $X$  is a  Cayley graph of the group ${SD}_{8m} = \langle a, b \mid a^{4m} = b^2 = 1,\ a^b = a^{2m-1} \rangle,\ m \geq 3$.
\end{lemma}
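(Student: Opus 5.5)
The plan is to exhibit an explicit regular subgroup of $\operatorname{Aut}(X)$ isomorphic to ${SD}_{8m}$. By \cite[Lemma~16.3]{Biggs1993} this shows that $X$ is a Cayley graph of ${SD}_{8m}$. I will work with a field model of $\Gamma(d,q)$, build the rotation $a$ from a Singer cycle, and build $b$ from a Frobenius-twisted duality.

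\textbf{Step 1: a field model.} Identify $V$ with $F=\operatorname{GF}(q^d)$, and let $\operatorname{Tr}\colon F\to \operatorname{GF}(q)$ be the trace map. Since the trace form is nondegenerate, every affine hyperplane not containing $0$ has the form $\{x : \operatorname{Tr}(xy)=1\}$ for a unique $y\in F^*$. So points and blocks are both indexed by $F^*$. A point $x$ and a block $y$ are incident exactly when $\operatorname{Tr}(xy)=1$.

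The group $N\le Z(GL(d,q))$ consists of the multiplications by the subgroup $N_0\le \operatorname{GF}(q)^*$ of order $(q-1)/r$. Under the identification it acts on points by $x\mapsto \lambda x$ and on blocks by $y\mapsto \lambda^{-1}y$. Hence the vertices of $X$ are the classes $xN_0$ of points and $yN_0$ of blocks. There are $r\frac{q^d-1}{q-1}=4m$ classes on each side. Two classes are adjacent if some representatives are incident.

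\textbf{Step 2: the maps $a$ and $b$.} Fix a primitive element $\omega$ of $F$. Define $a$ by $x\mapsto \omega x$ on points and $y\mapsto\omega^{-1}y$ on blocks. It preserves incidence because $\operatorname{Tr}(\omega x\cdot\omega^{-1}y)=\operatorname{Tr}(xy)$, and it commutes with $N$, so it induces an automorphism of $X$. Since $N_0=\langle\omega^{4m}\rangle$, the map $a$ has order exactly $(q^d-1)/|N_0|=4m$ on $X$. Moreover $\langle a\rangle$ is regular on each of the two parts.

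Next define $b$ by
\[
\text{point } x\mapsto \text{block } x^{q^k}, \qquad \text{block } y\mapsto \text{point } y^{q^k}.
\]
It preserves incidence since $\operatorname{Tr}(x^{q^k}y^{q^k})=\operatorname{Tr}(xy)^{q^k}=\operatorname{Tr}(xy)$. It is well defined on $N$-classes because the Frobenius map sends $N_0$ to itself.

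The map $b^2$ sends $x\mapsto x^{q^{2k}}$. This is the identity because $d\mid 2k$, so $b^2=1$. (Together with $0<k<d$, the condition $d\mid 2k$ forces $k=d/2$, which is where the evenness of $d$ enters.)

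\textbf{Step 3: the conjugation relation, which is the key computation.} Chase a point $x$ through $bab$:
\[
x\ \mapsto\ \text{block } x^{q^k}\ \mapsto\ \text{block } \omega^{-1}x^{q^k}\ \mapsto\ \text{point } \omega^{-q^k}x .
\]
The same computation on blocks gives $y\mapsto\omega^{q^k}y$. Hence $a^b=a^{-q^k}$ on $X$.

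The hypothesis $q^k\equiv 2m+1\pmod{4m}$ gives $-q^k\equiv 2m-1\pmod{4m}$. Since $a$ has order $4m$, this yields $a^b=a^{2m-1}$.

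\textbf{Step 4: regularity and identification with ${SD}_{8m}$.} Let $T=\langle a,b\rangle$. Because $b$ normalises $\langle a\rangle$, we have $T=\langle a\rangle\cup\langle a\rangle b$, so $|T|\le 8m$. The subgroup $\langle a\rangle$ is regular on each part and $b$ swaps the two parts, so $T$ is transitive on the $8m$ vertices. Therefore $|T|=8m$ and $T$ is regular.

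The elements $a,b$ satisfy the defining relations of ${SD}_{8m}$, so $T$ is a quotient of ${SD}_{8m}$. Since both groups have order $8m$, the quotient is trivial and $T\cong {SD}_{8m}$. This proves that $X$ is a Cayley graph of ${SD}_{8m}$.

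The main point needing care is the sign in the conjugation. The duality must be twisted by the Frobenius power $q^k$: the untwisted duality $x\leftrightarrow x$ only yields $a^b=a^{-1}$, which gives a dihedral group. I would double-check the block action of $a$ and the order of composition, since these determine whether the exponent comes out as $-q^k$ or $-q^{-k}$. Either sign suffices here, because $q^{2k}\equiv 1 \pmod{4m}$, so $q^{-k}\equiv q^k$.
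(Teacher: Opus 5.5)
Your proposal is correct and follows essentially the same route as the paper. Both take $a$ to be the image of a Singer cycle and $b$ to be the image of the Frobenius-twisted point--block duality $\phi\sigma^k$. In both, $d\mid 2k$ gives $b^2=1$, $q^k\equiv 2m+1 \pmod{4m}$ gives $a^b=a^{2m-1}$, and a transitivity-plus-order count finishes the argument. Your explicit trace-form model, with $a$ acting on blocks by $y\mapsto\omega^{-1}y$, also pins down the block action of the Singer cycle and the relation $\phi\tau\phi=\tau^{-1}$, which the paper asserts without spelling out.
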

\begin{proof}

Since $\Gamma(d,q)$ is a point-block incidence graph, the vertices of the graph $\Gamma(d,q)$ are divided into two types, that is a set $\mathcal{P}$ of points and a set $\mathcal{B}$ of blocks. Note that the point-block incidence graph of an incidence structure is a bipartite graph. We denote that $\mathcal{P} = \mathbb{F}_{q^d} \setminus \{0\}$ and  $\mathcal{B} = \{H_x \mid x \in \mathbb{F}_{q^d} \setminus \{0\}\}$, each block is uniquely determined by a nonzero normal vector $x$, defined as $H_x = \{z \mid z^t x = 1\}$.
Thus, every vertex of  the graph  $\Gamma(d,q)$ can be uniformly written as an ordered pair: $V = \{(\epsilon,x) \mid \epsilon \in \{P,B\},\ x \in \mathbb{F}_{q^d}^*\}$, where $(P,x)$ stands for the point $x$ and $(B,x)$ stands for the block $H_x$. Then $|\Gamma(d,q)| = (q^d - 1) + (q^d - 1) = 2(q^d - 1)$.

Define the map $\phi$ swaps points and blocks while keeping the underlying field element labels unchanged, that is $\phi(P,x) = (B,x)$ and $\phi(B,x) = (P,x)$. Thus we have $\phi^2 = 1$.

Let $q$ be a power of $p$. We shall choose the finite field $\mathbb{F}_{q^d}$ as our $d$-dimensional vector space over $\mathbb{F}_q$ on which $GL(d,q)$ acts. Let $\alpha$ be a generator of the multiplicative group $\mathbb{F}_{q^d}\setminus\{0\}$ and define $\tau:\mathbb{F}_{q^d}\to\mathbb{F}_{q^d}$ by $\tau(z)=\alpha z$. Then $\tau$ generates a cyclic subgroup $\langle \tau \rangle$ of order $q^d-1$ in $GL(d,q)$, known as a Singer cycle.

Let $\sigma$ be the automorphism of $\mathbb{F}_{q^d}$ defined by $\sigma(z)=z^q$ for all $z\in\mathbb{F}_{q^d}$. Then $\sigma$ generates a cyclic subgroup $\langle \sigma \rangle$ of order $d$. Since $\sigma$ acts $\mathbb{F}_q$-linearly on $\mathbb{F}_{q^d}$ and $\sigma^{-1}\tau\sigma=\tau^q$, it follows that $\sigma$ normalizes $\langle \tau \rangle$.


The automorphism $\sigma$ induces an automorphism of the graph, whose action is described as follows:
the action on points is $\sigma(P,x)=(P,x^q)$. Since $\sigma$ is a field automorphism, it maps the hyperplane $H_x=\{z\mid z^t x=1\}$ to $H_{x^q}=\{w\mid w^t x^q=1\}$. Thus the action on blocks is $\sigma(B,x)=(B,x^q)$.
That is, $\sigma$ preserves the type label entirely and only replaces the field element $x$ with its $q$ power.

Since $\phi$ only acts on the "type coordinate" while $\sigma$ only acts on the "field element coordinate", they are independent and commute with respect to the direct product structure. Hence, for any integer exponent $k$, we have $\phi \sigma^k = \sigma^k \phi$. Moreover, It is easy to see that $\phi ^{-1}\tau\phi = \tau^{-1}$.

Note that $\Gamma(d,q,r)$ is the quotient graph of $\Gamma(d,q)$ induced by $N$, where $N \leq Z = Z(GL(d,q)) \cong \mathbb{Z}_{q-1}$, and for some $r \mid q-1$, $|N| = (q-1)/r$. By the definition of graph $\Gamma(d,q,r)$, we have  $|\Gamma(d,q,r)| = \frac{2(q^d - 1)}{|N|} = \frac{2r(q^d - 1)}{q - 1} =8m$. For notational convenience, let $\overline{X} = \Gamma(d,q)$ and $X = \overline{X}_N = \Gamma(d,q,r)$. Since the general linear group $GL(d, q)$ acts as a group of automorphism
of the graph $\Gamma(d,q)$, $GL(d,q) \leq \mathrm{Aut}(\overline{X})$ and $GL(d,q)/N \leq \mathrm{Aut}(X)$.

Let $a$ denote the image of $\tau$ in the quotient group, that is, $a = \tau N$.
Since the order of $\tau$ is $q^d-1$ and $|N| = (q-1)/r$, it follows that the order of $a$ is $4m$.
Let $b = \phi \sigma^k N$. Using $\phi \sigma^k = \sigma^k \phi$ and $d\mid 2k$ we obtain
\[
b^2 = (\phi \sigma^k N)(\phi \sigma^k N) = \phi^2 \sigma^{2k} N = \sigma^{2k} N = N,
\]
hence $b$ has order $2$. Moreover, from $q^k \equiv 2m+1 \pmod{4m}$ and $\sigma^{-1}\tau\sigma = \tau^{q}$ we deduce
\[
\begin{aligned}
b^{-1}ab &= \sigma^{-k} \phi N \,\tau N\, \phi \sigma^k N
          = \sigma^{-k} (\phi \tau \phi) \sigma^k N
          = \sigma^{-k} \tau^{-1} \sigma^k N \\
         &= (\sigma^{-k} \tau \sigma^k)^{-1} N
          = (\tau^{q^k})^{-1} N
          = \tau^{2m-1} N
          = a^{2m-1}.
\end{aligned}
\]

Since the group $\langle \tau, \phi \rangle$ is transitive on $V(\overline{X})$, we have the group $\langle a, b \rangle$ is transitive on $V(X)$. Let $T = \langle a, b \rangle \leq \mathrm{Aut}(X)$, then  the order of $T$ is $8m$  and $T \cong {SD}_{8m} = \langle a, b \mid a^{4m} = b^2 = 1,\ a^b = a^{2m-1} \rangle,\ m \geq 3$. Thus $X$  is a  Cayley graph of the group ${SD}_{8m}$.
\end{proof}

   \medskip




    \bigskip

\section{Proof of Theorem \ref{thm1.1}}

In this section, we will prove our main  theorem by a series of lemmas.

The first lemma lists all normal subgroups of semi-dihedral groups.

\begin{lemma}\label{normal group}
Let $T = \mathrm{SD}_{8m} = \langle a, b \mid a^{4m} = 1,\, b^2 = 1,\, bab = a^{2m-1} \rangle$ with $m \geq 3$. Then all nontrivial proper normal subgroups $N$ of $T $ are precisely:
\begin{itemize}
    \item [(1)]   $N = \langle a^i \rangle$, where    $i$ is a divisor of $4m$ with $1\leq i<4m$;
    \item [(2)]   $N = \langle a^2, ab \rangle \cong Q_{4m}$;
    \item [(3)]  $N = \langle a^2, b \rangle \cong D_{4m}$;
       \item [(4)]  $m$ is odd,  $N = \langle a^4, b \rangle$ or $ \langle a^4, a^2b \rangle$, and $N\cong D_{2m}$.
\end{itemize}
\end{lemma}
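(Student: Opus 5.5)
The plan is to split according to whether $N$ is contained in the cyclic subgroup $\langle a\rangle$, which has index $2$. I will use two facts from the relation $a^b=a^{2m-1}$ throughout:
\[
(a^jb)^{-1}a(a^jb)=a^{2m-1}
\qquad\text{and}\qquad
(a^jb)^2=a^{j}a^{j(2m-1)}=a^{2jm}.
\]

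\emph{Case $N\le\langle a\rangle$.} Every subgroup of the cyclic group $\langle a\rangle$ is characteristic in it. Since $\langle a\rangle\trianglelefteq T$, every such subgroup is normal in $T$. These subgroups are exactly the $\langle a^i\rangle$ with $i\mid 4m$, and the condition $1\le i<4m$ removes the trivial subgroup. This gives family (1), which includes $\langle a\rangle$ itself for $i=1$.

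\emph{Case $N\not\le\langle a\rangle$.} Then $N$ contains some element $a^jb$. Normality gives $a^{-1}(a^jb)^{-1}a(a^jb)\in N$, and by the first displayed fact this element is $a^{-1}a^{2m-1}=a^{2m-2}$. Hence $\langle a^{2m-2}\rangle=\langle a^{g}\rangle\le N$, where
\[
g=\gcd(2m-2,4m)=2\gcd(m-1,2).
\]
So $g=2$ when $m$ is even and $g=4$ when $m$ is odd.

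If $m$ is even, then $a^2\in N$. Since $a^jb\notin\langle a^2\rangle$, the subgroup $N$ strictly contains $\langle a^2\rangle$, which has index $4$ in $T$. So $N$ has index at most $2$. The index-$2$ subgroups containing $a^2$ are $\langle a\rangle$, $\langle a^2,b\rangle$ and $\langle a^2,ab\rangle$, and every index-$2$ subgroup is normal. Excluding $\langle a\rangle$ and $T$, this leaves (2) and (3).

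If $m$ is odd, then $a^4\in N$, and I pass to $\overline T=T/\langle a^4\rangle$, which has order $8$. Since $2m-1\equiv1\pmod 4$, the image of $a$ is centralised by $b$, so $\overline T\cong\mathbb Z_4\times\mathbb Z_2$ is abelian. Consequently every subgroup of $T$ containing $\langle a^4\rangle$ is normal, and it remains to list the subgroups of $\overline T$ containing an element $\overline{a^jb}$.
\begin{itemize}
\item[(a)] For $j$ even, the second displayed fact gives $(a^jb)^2=a^{2jm}=1$, since $4m\mid 2jm$. The order-$2$ subgroups $\langle\overline b\rangle$ and $\langle\overline{a^2b}\rangle$ pull back to $\langle a^4,b\rangle$ and $\langle a^4,a^2b\rangle$, which is family (4).
\item[(b)] For $j$ odd, $\overline{a^jb}$ has order $4$, with square $\overline{a^{2m}}=\overline{a^2}$ because $m$ is odd. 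So any such $N$ contains $a^2$, because $\gcd(2m,4)=2$, and we are back to the index-$2$ subgroups (2) and (3).
\item[(c)] The remaining subgroups of $\overline T$ of order $4$ containing some $\overline{a^jb}$ also pull back to $\langle a^2,b\rangle$ or $\langle a^2,ab\rangle$.
\end{itemize}

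\emph{Isomorphism types.} In each non-cyclic subgroup, conjugation by the relevant element inverts the cyclic part.
\begin{itemize}
\item[(i)] $b$ inverts $a^2$, since $ba^2b=a^{4m-2}$, and $b^2=1$. So $\langle a^2,b\rangle\cong D_{4m}$.
\item[(ii)] $ab$ inverts $a^2$, since $(ab)^{-1}a^2(ab)=a^{4m-2}$, and $(ab)^2=a^{2m}$ is the unique involution of $\langle a^2\rangle$. So $\langle a^2,ab\rangle$ is dicyclic of order $4m$, that is $Q_{4m}$.
\item[(iii)] For $m$ odd, $b$ and $a^2b$ are involutions inverting $a^4$, and $\langle a^4\rangle$ has order $m$. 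So the two subgroups in (4) are $\cong D_{2m}$.
\end{itemize}

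The main obstacle is the odd-$m$ bookkeeping. There I must check carefully that $\overline T$ is indeed abelian, and that no subgroup other than those in (2), (3) and (4) arises. In particular, I must confirm that the only order-$2$ subgroups of $\overline T$ meeting the coset $\overline{\langle a\rangle b}$ are $\langle\overline b\rangle$ and $\langle\overline{a^2b}\rangle$. I will do this by explicitly listing the five proper nontrivial subgroups of $\mathbb Z_4\times\mathbb Z_2$ that contain an element outside $\langle\overline a\rangle$.
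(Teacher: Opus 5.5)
Your proof is correct. It shares the paper's opening: both split on whether $N\le\langle a\rangle$, both use that subgroups of the normal cyclic subgroup $\langle a\rangle$ are characteristic, and both conjugate an element $a^jb\in N$ by $a$ to get $a^{2m-2}\in N$, hence $a^{\gcd(2m-2,4m)}\in N$.

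The finish is different. The paper writes $N=\langle a^d,a^rb\rangle$ with $\langle a^d\rangle=N\cap\langle a\rangle$ and $d\in\{2,4\}$. It then sorts by the parity of $r$ and checks normality of the candidates by hand. You instead use index counting over $\langle a^2\rangle$ when $m$ is even. When $m$ is odd, you use the correspondence theorem in the abelian quotient $T/\langle a^4\rangle\cong\mathbb{Z}_4\times\mathbb{Z}_2$. In your route normality of every listed subgroup comes for free, and completeness reduces to listing the subgroups of a group of order $8$.

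Your route is also sounder at one point. To exclude $\langle a^4,ab\rangle$ when $m$ is odd, the paper claims that $d\mid 2r(m-1)$ and $d\mid 2(m-1)$ together give $d\mid 2r$. This does not follow, since the first divisibility is already implied by the second and so carries no information. Your identity $(a^jb)^2=a^{2jm}$ correctly shows that an odd $j$ forces $a^{2m}$, and hence $a^2$, into $N$.

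The paper's version has the advantage of producing the generators $\langle a^d,a^rb\rangle$ directly. Yours also verifies the isomorphism types, which the paper only asserts.

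One small slip: $\mathbb{Z}_4\times\mathbb{Z}_2$ has four, not five, proper nontrivial subgroups not contained in $\langle\overline{a}\rangle$. They are $\langle\overline{b}\rangle$, $\langle\overline{a^2b}\rangle$, $\langle\overline{ab}\rangle$ and $\langle\overline{a^2},\overline{b}\rangle$. This does not affect the argument.
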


\begin{proof}
The group  $T$ decomposes as the disjoint union  $T = SD_{8m} = \langle a \rangle \cup \langle a \rangle b$, so its   elements can be partitioned into two  subsets of equal size as follows:
\[
A = \{a^i \mid 0 \leq i \leq 4m - 1\} \quad \text{and} \quad B = \{a^i b \mid 0 \leq i \leq 4m - 1\}.
\]

By the formula for the order of a group element, we have \( o(a^i) = \frac{4m}{\gcd(i, 4m)} \).
	
For an element $a^ib$ in $B$, we compute its square using the relation   $(a^i b)^2 = a^i b a^i b = a^i \cdot a^{i(2m-1)} = a^{i + i(2m-1)} = a^{2mi}$. Thus   $o(a^i b) = 2$ when $i$ is even and $o(a^i b) = 4$ when $i$ is odd.
In particular,   every  element of \( B \) has order \( 2 \) or \( 4 \).

Suppose first that    $N \leq \langle a \rangle$. Then $N = \langle a^i \rangle$ for some    $ i \mid 4m $. Since $|T:\langle a \rangle |=2$, $\langle a \rangle $ is a normal subgroup of $T$.   Moreover,   every subgroup of a cyclic group is characteristic, so  $N = \langle a^i \rangle$ is a normal subgroup of $T$.

Now assume    \( N \nleq \langle a \rangle \).  Since    \( N \trianglelefteq T \), the intersection  \( N \cap \langle a \rangle \) is normal in  \( \langle a \rangle \), so \( N \cap \langle a \rangle = \langle a^d \rangle \) for some divisor $d$ of  \(  4m \), where   $a^d$ is an element of minimal positive exponent in  $A\cap N$.   Because   \( N \nleq \langle a \rangle \), $N$ contains some  element $a^rb $ in  $B$.


By the second isomorphism theorem,
\[
N / (N \cap \langle a \rangle) \cong N \langle a \rangle / \langle a \rangle\leq T / \langle a \rangle.
\]
Since  \(T / \langle a \rangle\) has order 2, the quotient   $N / (N \cap \langle a \rangle)$ has order either 1 or 2.
Order 1 would imply  $N = N \cap \langle a \rangle \leq \langle a \rangle$,  contradicting our assumption. Hence
$N / (N \cap \langle a \rangle)$ has order  2, and so $N=\langle a^d \rangle\cup \langle a^d \rangle a^rb$. Thus,  \( N = \langle a^d, a^rb \rangle \).

Because   \( N = \langle a^d, a^rb \rangle \trianglelefteq T \), it is closed under conjugation by the generators $a$ and $b$  of $T$.
Conjugating $a^d$ by either generator yields an element of $\langle a^d \rangle$, which is automatically in $N$. We now impose normality on the generator $a^r b$.


Conjugate $a^r b$ by $a$:
\[
(a^r b)^a = a^{-1} a^r b a = a^{r-1} b a = a^{r-1} a^{2m-1} b = a^{2m-2} \cdot a^r b.
\]
Since $(a^r b)^a \in N$ and $a^r b \in N$, we must have $a^{2m-2} \in N$, so $d \mid 2m-2$.

Next, conjugate $a^r b$ by $b$:
\[
(a^r b)^b = b^{-1} a^r b b = a^{r(2m-1)} b = a^{2r(m-1)} \cdot a^r b.
\]
Since $(a^r b)^b \in N$ and $a^r b \in N$, it follows that $a^{2r(m-1)} \in N$, so $d \mid 2r(m-1)$. Combined with $d \mid 2(m-1)$, this reduces to $d \mid 2r$.

Since $d \mid 2m-2$ and $d \mid 4m$, and since $\gcd(2m-2,4m) \in \{2,4\}$, we have
$d \in \{2,4\}$.
As $N$ is a nontrivial normal subgroup,  $d \geq 2$. Consequently,
\[
\begin{cases}
d = 2 \text{ or } 4, & \text{if } m \text{ is odd}; \\
d = 2, & \text{if } m \text{ is even}.
\end{cases}
\]

If  $d = 2$, then   $N = \langle a^2, a^rb \rangle$. When  $r$ is odd, $r-1$ is even, and  $ab=(a^2)^{\frac{4m-r+1}{2}}(a^rb)$,
so we may replace $a^rb$ by $ab$, consequently   $N = \langle a^2, ab \rangle \cong Q_{4m}$. When $r$ is even, $a^r\in \langle a^2 \rangle$, so   $N = \langle a^2, b \rangle \cong D_{4m}$.

If  $d = 4$ (which forces   $m$ odd),  then $N = \langle a^4, a^rb \rangle$. Since $d$ divides $2r$, $r$ is necessarily even, which yields $N = \langle a^4, a^2b \rangle$ or $\langle a^4, b \rangle$. We conclude that $N$ is isomorphic to the dihedral group $D_{2m}$ of order $2m$.

Conversely, it is straightforward to verify that each subgroup listed above is normal in $T$: the subgroups $\langle a^i \rangle$ are characteristic in the normal subgroup $\langle a \rangle$; the subgroups $\langle a^2, ab \rangle$ and $\langle a^2, b \rangle$ have index $2$; and the remaining subgroups, when $m$ is odd, are invariant under conjugation by $a$ and $b$ by the relations derived above. Hence the list is complete.
\end{proof}

\begin{lemma}\label{T/N}
Let $T = \mathrm{SD}_{8m} = \langle a, b \mid a^{4m} = 1,\, b^2 = 1,\, bab = a^{2m-1} \rangle$ with $m \geq 3$. Suppose $N$ is a normal subgroup of $T$ such that $N = \langle a^i \rangle$, where $i$ is a proper divisor of $4m$ and $i \neq 1, 4m$. Then one of the following four cases holds:
\begin{itemize}
    \item [(1)] $T/N \cong \mathbb{Z}_2 \times \mathbb{Z}_2$ if $i = 2$;
    \item [(2)] $T/N \cong D_{2i}$ if $i \geq 3$ and $i \mid 2m$;
    \item [(3)] $T/N \cong \mathbb{Z}_4 \times \mathbb{Z}_2$ if $i = 4$ and $i \nmid 2m$;
    \item [(4)] $T/N \cong \mathrm{SD}_{2i}$ if $i \geq 3$, $i \neq 4$ and $i \nmid 2m$.
\end{itemize}
\end{lemma}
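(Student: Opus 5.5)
The plan is to compute a presentation of $T/N$ directly. Write $\bar a = aN$ and $\bar b = bN$. Since $N=\langle a^i\rangle$ with $i\mid 4m$, the image $\bar a$ has order exactly $i$ in $T/N$, because $\langle a\rangle/N\cong\mathbb{Z}_i$. Also $b\notin\langle a\rangle$, so $\bar b\notin\langle\bar a\rangle$. Hence $|T/N|=2i$, $\bar b^2=1$, and
\[
T/N=\langle \bar a,\bar b \mid \bar a^{\,i}=\bar b^{2}=1,\ \bar a^{\bar b}=\bar a^{\,2m-1}\rangle,
\]
where the exponent $2m-1$ is now read modulo $i$. The whole proof then reduces to determining the residue of $2m-1$ modulo $i$, and then recognising the resulting group of order $2i$.

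The case split is governed by whether $i\mid 2m$.

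If $i\mid 2m$, then $2m-1\equiv -1 \pmod i$. This gives $\bar a^{\bar b}=\bar a^{-1}$, so $T/N\cong D_{2i}$ when $i\ge 3$. When $i=2$ the group is $\mathbb{Z}_2\times\mathbb{Z}_2$, because $\bar a^{-1}=\bar a$. Note that $i=2$ always divides $2m$, which yields case (1); the case $i\geq3$ gives case (2).

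If $i\nmid 2m$, then since $i\mid 4m$, the $2$-part of $i$ equals the $2$-part of $4m$. Write $i=2i'$ with $i'\mid 2m$ and $i'\nmid m$. Then $2m\equiv i/2 \pmod i$, because $2m/i'$ is odd, so $2m-1\equiv i/2-1\pmod i$. Hence $\bar a^{\bar b}=\bar a^{\,i/2-1}$, which is exactly the semidihedral relation for a group of order $2i$. This gives $T/N\cong \mathrm{SD}_{2i}$ in the sense of the defining presentation with $4m$ replaced by $i$. For $i=4$ the relation becomes $\bar a^{\bar b}=\bar a$, so the group is abelian, generated by an element of order $4$ and an involution outside its span, i.e.\ $\mathbb{Z}_4\times\mathbb{Z}_2$; this is case (3). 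Since $i\nmid 2m$ forces $4\mid i$, the remaining values satisfy $i\geq 8$, and these give case (4).

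The only delicate step is justifying that the listed presentation really is a presentation of $T/N$, and not merely a set of relations holding there. I would handle this by order counting: the abstract group defined by the presentation has order at most $2i$, since every element can be written as $\bar a^{\,j}\bar b^{\,\epsilon}$. On the other hand $T/N$ satisfies these relations and has order exactly $2i$. So the natural surjection from the abstract group onto $T/N$ is an isomorphism, and the recognition of $D_{2i}$, $\mathbb{Z}_2^2$, $\mathbb{Z}_4\times\mathbb{Z}_2$ and $\mathrm{SD}_{2i}$ follows.

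A minor point is checking the residue computation $2m\equiv i/2\pmod i$ when $i\nmid 2m$. Write $4m=iu$; then $u$ must be odd (otherwise $i\mid 2m$), so $2m=i\cdot u/2$ with $u$ odd, giving $2m\equiv i/2 \pmod i$.
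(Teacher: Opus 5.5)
Your proposal is correct and follows the same route as the paper: both reduce the relation $\bar a^{\bar b}=\bar a^{\,2m-1}$ modulo $i$, split on whether $i\mid 2m$ (giving exponent $-1$, respectively $i/2-1$, which is $1$ when $i=4$), and then identify the resulting group of order $2i$. Your order-counting argument that these relations actually present $T/N$, and your residue computation via $4m=iu$ with $u$ odd, are somewhat cleaner than the paper's $2$-adic bookkeeping ($i=4d$, $m=2^tf$), but they leave the argument unchanged.
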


\begin{proof}

Since $N \trianglelefteq T$, every element of $T/N$ can be expressed as $a^{i'} b^{j'} N$, where $i'$ is a positive integer with $1 \le i' \le 4m-1$ and $j' \in \{0,1\}$. Hence $T/N$ is generated by $aN$ and $bN$. Set $x = aN$ and $y = bN$. From $bab = a^{2m-1}$, we obtain $babN = a^{2m-1}N = (a^{2m}N)(a^{-1}N)$ and $yxy = x^{2m-1}$.

First suppose $i \mid 2m$. Since $N = \langle a^i \rangle$, we have $a^{2m} \in N$ and $yxy = x^{-1}$.

If $i = 2$, then $N = \langle a^2 \rangle$. Note that $x^k = (aN)^k = a^k N = N = \overline{1}$, which implies $a^k \in N = \langle a^2 \rangle$ and thus $k=2$. Similarly, $y^l = (bN)^l = b^l N = N = \overline{1}$ yields $b^l \in N = \langle a^2 \rangle$ and $l=2$. Therefore, both $x = aN$ and $y = bN$ have order $2$. Consequently,
\[
T/N = \langle x, y \mid x^2 = \overline{1},\ y^2 = \overline{1},\ yxy = x^{-1} \rangle \cong \mathbb{Z}_2 \times \mathbb{Z}_2,
\]
so Case (1) is satisfied.

If $i \geq 3$, note that $x^k = (aN)^k = a^k N = N = \overline{1}$, then we have $a^k \in N = \langle a^i \rangle$ and $k=i$. Similarly, $y^l = (bN)^l = b^l N = N = \overline{1}$ implies $l=2$. So the order of $x$ is $i$, and $y$  has order $2$. Thus
\[
T/N = \langle x, y \mid x^i = \overline{1},\ y^2 = \overline{1},\ yxy = x^{-1} \rangle \cong D_{2i},
\]
which gives Case (2).

Next suppose $i \nmid 2m$. As $i$ divides $4m$ but not $2m$, we necessarily have $4 \mid i$.

If $i = 4$, then $N = \langle a^4 \rangle$. Since $x^k = (aN)^k = a^k N = N = \langle a^4 \rangle$, it follows that $a^k \in N = \overline{1}$ and $k=4$. Similarly, $y^l = (bN)^l = b^l N = N = \overline{1}$ yields $b^l \in N = \langle a^2 \rangle$ and $l=2$. Thus $x$ has order $4$ and $y$ has order $2$. Moreover, the condition $i \nmid 2m$ implies that $m$ is odd. Write $m = 2r+1$ for some positive integer $r$. Since
\[
babN = a^{2m}N a^{-1}N = a^{2(2r+1)}N a^{-1}N = a^{4r+1}N = aN,
\]
it follows that $yxy = x$, and therefore
\[
T/N = \langle x, y \mid x^4 = \overline{1},\ y^2 = \overline{1},\ yxy = x \rangle \cong \mathbb{Z}_4 \times \mathbb{Z}_2.
\]
Hence Case (3) holds.

Now consider $i \neq 4$. Then $N = \langle a^i \rangle$. Since $x^k = (aN)^k = a^k N = N = \overline{1}$, we have $k = i$. As $y^l = (bN)^l = b^l N = N = \overline{1}$, it follows that $l=2$. So the order of $x$ is $i$, and $y$ has order $2$.

Recall $4 \mid i$, and set $i = 4d$. Write $m = 2^t f$ where $f$ is odd. Then $4m = 2^{t+2}f$ and $2m = 2^{t+1}f$. Since $i \mid 4m$ and $i \nmid 2m$, we have $i = 2^{t+2}f'$ for some odd divisor $f'$ of $f$. Thus $d = 2^{t}f'$ and $\frac{m}{d} = \frac{f}{f'}$ is odd. Let $\dfrac{m}{d} = 2r+1$. Then
\[
a^{2m} = a^{2d\cdot \frac{m}{d}} = a^{2d(2r+1)} = a^{2d} \cdot a^{4dr} = a^{2d} \cdot a^{ir}.
\]
Since $a^{ir} \in N$, we get
\[
babN = a^{2m-1}N = a^{2m}Na^{-1}N = a^{2d}N \cdot a^{-1}N .
\]
Thus, $yxy = x^{2d-1}$. Consequently,
\[
T/N = \langle x, y \mid x^{4d} = \overline{1},\ y^2 = \overline{1},\ yxy = x^{2d-1} \rangle \cong \mathrm{SD}_{8d} \cong \mathrm{SD}_{2i},
\]
and Case (4) is verified.
\end{proof}

\begin{lemma}\label{quot-notkmb-1}
Let $\Gamma$ be a connected $(G,2)$-distance-transitive  graph.
 Suppose that $\Gamma\ncong \K_{m'[b']}$ for any  $m'\geq 3$ and $b'\geq 2$.
Let $N$ be an intransitive normal subgroup of $G$  such that $\Gamma$ is a cover of $\Gamma_N$. Then  $\Gamma_N\ncong \K_{m[b]}$ for any  $m\geq 3$ and $b\geq 2$.
\end{lemma}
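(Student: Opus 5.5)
Suppose, for a contradiction, that $\Gamma_N\cong \K_{m[b]}$ with $m\geq 3$ and $b\geq 2$. Since $\Gamma$ is a cover of $\Gamma_N$, the graph $\Gamma$ has the same valency as $\Gamma_N$, namely $k=(m-1)b$. The plan is to look at a vertex $u$ of $\Gamma$ and count its neighbourhood and its second neighbourhood, and to use the $2$-distance-transitivity of $\Gamma$ to force $\Gamma$ itself to be complete multipartite.

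Write $\pi\colon\Gamma\to\Gamma_N$ for the quotient map, and let $B=\pi(u)$. The covering property gives three facts. First, $\pi$ restricts to a bijection from $\Gamma(u)$ onto $\Gamma_N(B)$. Second, two vertices in the same $N$-orbit are never adjacent. Third, $\pi$ is locally bijective, so it preserves adjacency among neighbours and maps paths of length $2$ to walks of length $2$.

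Take two neighbours $v,w$ of $u$ with $\pi(v)$ and $\pi(w)$ in different parts of $\K_{m[b]}$. These exist because $m\geq 3$. The orbits $\pi(v)$ and $\pi(w)$ are adjacent in $\Gamma_N$, so $v$ has exactly one neighbour in $\pi(w)$; it may or may not be $w$. So the cover need not keep triangles, and this is exactly where the difficulty lies.

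I would therefore argue through the vertices at distance $2$. Let $B'\neq B$ be a block lying in the same part of $\K_{m[b]}$ as $B$. The set $B'$ has no neighbours among the blocks of its own part and is adjacent to every other block. Every vertex $x\in B'$ therefore has its $k$ neighbours spread one per block over the $(m-1)b$ blocks outside that part, and these are exactly the blocks containing the neighbours of $u$. So each block $\pi(v)$ with $v\in\Gamma(u)$ contains exactly one neighbour of $x$.

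Next comes a counting step. Fix $v\in\Gamma(u)$. The $k-1$ vertices of $\Gamma(v)\setminus\{u\}$ lie in distinct blocks. Since $v$ is adjacent to every block except those in the part of $\pi(v)$, every block $B'\neq B$ in the part of $B$ receives exactly one neighbour of $v$. Hence $u$ has, through paths of length $2$, at least $(b-1)|\Gamma(u)|/c$ vertices at distance $2$ lying in blocks $B'$ of its own part, where $c$ is the number of common neighbours. Also, $\Gamma_2(u)$ meets blocks in other parts only if some neighbour pair $v,v'$ fails to form a triangle.

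Now apply $2$-distance-transitivity: $G_u$ is transitive on $\Gamma_2(u)$. The intended contradiction runs as follows. Because $G_u$ is transitive on $\Gamma_2(u)$ and $N\trianglelefteq G$, the set of blocks met by $\Gamma_2(u)$ is a single $G_u$-orbit on blocks. Since $\Gamma_2(u)$ already meets the blocks $B'$ in the part of $B$, it can meet only blocks that are "non-adjacent to $B$ in $\Gamma_N$", namely those in the part of $B$, including possibly $B$ itself. Consequently any two adjacent blocks lift to triangles, so $\Gamma(u)$ induces a graph isomorphic to $\Gamma_N(B)$, which is $\K_{(m-1)[b]}$. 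A short argument then shows that $\Gamma$ is locally $\K_{(m-1)[b]}$, with the vertices at distance $2$ from $u$ being exactly the vertices of the blocks in $u$'s part other than $u$ itself. The non-adjacency relation on $\Gamma$ is then an equivalence relation whose classes have size $b|N|$, so $\Gamma\cong\K_{m[b|N|]}$.

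This contradicts the hypothesis that $\Gamma$ is not complete multipartite, and the contradiction proves the statement. The main obstacle is the step showing that $\Gamma_2(u)$ cannot meet blocks adjacent to $B$. This is where the single $G_u$-orbit is combined with the fact that $G_u$ preserves the block system and acts on $\Gamma_N$, and where $m\geq 3$ is used. If this orbit argument is not enough on its own, I would fall back on comparing $|\Gamma_2(u)\cap B''|$ for blocks $B''$ of the two types, using the counts above.
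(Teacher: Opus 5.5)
Your key step is correct, and it is the same trick the paper uses. $G_u$ fixes the block $B=\pi(u)$, so it preserves the part of $\Gamma_N\cong\K_{m[b]}$ containing $B$. Since $\Gamma_2(u)$ meets some block $B'\neq B$ of that part and $G_u$ is transitive on $\Gamma_2(u)$, all of $\Gamma_2(u)$ lies in blocks of that part. (The paper argues this with $G_{u_{211}}$ fixing $C_2$.) The same argument also excludes $B$ itself, because $\{B\}$ is a $G_u$-orbit; so $\Gamma_2(u)\cap B=\emptyset$, not ``possibly''. Your deduction that $\Gamma(u)$ induces $\K_{(m-1)[b]}$ is also correct: two neighbours of $u$ are adjacent exactly when their blocks lie in different parts. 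This holds at every vertex. The count involving $c$ and the fallback are not needed.

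The gap is the last step, which you assert rather than prove, and assert incorrectly. You claim $\Gamma_2(u)$ consists of all other vertices of the blocks in $u$'s part. That includes $B\setminus\{u\}$, contradicting the remark above. You then conclude $\Gamma\cong\K_{m[b|N|]}$, which has valency $(m-1)b|N|$, not the valency $(m-1)b$ you recorded at the start; these agree only when $|N|=1$.

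What is actually needed is the step the paper spends most of its proof on, namely showing $\Gamma(u_{121})=\Gamma(u_{111})$ through further orbit arguments and a count. In your notation: every $z\in\Gamma_2(u)$ satisfies $\Gamma(z)=\Gamma(u)$. With your local description this is quick.
\begin{itemize}
\item Let $v$ be a common neighbour of $u$ and $z$. Using $m\geq 3$, choose $w\in\Gamma(v)$ whose block lies in a part different from those of $\pi(u)$ and $\pi(v)$.
\item At $v$, the vertices $u$ and $z$ lie in the same part of the local graph, so $w$ is adjacent to both $u$ and $z$.
\item Take $x\in\Gamma(u)$. If $\pi(x)$ is not in the part of $\pi(v)$, then $x\sim v$ by the local structure at $u$, and $x\sim z$ by the local structure at $v$.
\item Otherwise $x\sim w$ by the local structure at $u$, and $x\sim z$ by the local structure at $w$.
\end{itemize}
So $\Gamma(u)\subseteq\Gamma(z)$, with equality by valency. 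Hence $\Gamma$ is complete multipartite. Its parts have size $1+|\Gamma_2(u)|\geq 2$, and there are at least three parts because $u,v,w$ form a triangle. This contradicts the hypothesis.

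Combined with $\Gamma_2(u)\cap B=\emptyset$ and connectedness, this in fact forces $|N|=1$. So the structure your argument really leads to is $\Gamma\cong\Gamma_N\cong\K_{m[b]}$, not $\K_{m[b|N|]}$.
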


\begin{proof}

Assume to the contrary that $\Gamma_{N}\cong \K_{m[b]}$ for some $m\geq 3$ and $b\geq 2$.
Let $C_1,C_2,\ldots,C_m$ be the corresponding $m$ blocks   of $V(\Gamma_N)$, set $C_i=\{B_{i1},B_{i2},\ldots,B_{ib}\}$ where  each $B_{ij}$ is an   $N$-orbit.
Let $B_{ij}=\{u_{ij1},u_{ij2},\ldots,u_{ij|N|}\} $ with $u_{ijk}\in V(\Gamma)$.

Since $[C_i]\cong \overline{\K_b}$ in $\Gamma_N$, the blocks    $B_{ie}$ and $B_{if}$ are non-adjacent whenever $e\neq f$,
and so no vertex of $B_{ie}$ is  adjacent to any vertex of $B_{if}$.
Moreover,  $B_{ie}$ and $B_{jf}$ are adjacent whenever $i\neq j$. In particular,  $B_{21}$ is adjacent to both $B_{11}$ and $B_{12}$. Because   $\Gamma$ is a cover of $\Gamma_N$,
  the vertex $u_{211}$ is adjacent to a unique  vertex of $B_{11}$, say $u_{111}$, and to a unique vertex of $B_{12}$, say $u_{121}$.
Then   $(u_{111},u_{211},u_{121})$ forms   a $2$-geodesic of $\Gamma$.

Since   $m\geq 3$, there exists a block $C_x$ with    $x\neq 1,2$.
From   the fact   $\Gamma_{N}\cong \K_{m[b]}$ it follows   that   $B_{11}$ is    adjacent to all $N$-orbits of $C_x$, and so $u_{111}$ is adjacent to a vertex of each $B_{xi}$.
Without loss of generality, assume   $u_{111}$ is adjacent to  $u_{x11}\in B_{x1}$.

If $u_{211}$ were  not adjacent to $u_{x11}$, then $(u_{211},u_{111},u_{x11})$ would be   a 2-geodesic of $\Gamma$.
Since $B_{11}$ is adjacent to $B_{22}$ in $\Gamma_N$,
 $u_{111}$ is adjacent to  some   vertex of  $ B_{22}$, say $u_{221}$.
Then $(u_{211},u_{111},u_{221})$ is also   a 2-geodesic of $\Gamma$.
By the  $(G,2)$-distance-transitivity of  $\Gamma$, the vertex stabilizer $G_{u_{211}}$
contains    an element $g$ sending    $u_{221}$ to $u_{x11}$. Consequently,
$g$ maps  $B_{22}$ to $B_{x1}$,
and hence    $C_{2}$ to $C_{x}$. This   is impossible, because
$G_{u_{211}}$ fixes $C_2$ setwise. Thus $u_{211}$ is  adjacent to $u_{x11}$, that is, $u_{x11}\in \Gamma(u_{211})$.

Since $u_{x11}$ is an arbitrary  vertex in $\Gamma(u_{111})\cap (C_3\cup C_4\cup \cdots \cup C_m )$, we obtain
$\Gamma(u_{111})\cap (C_3\cup C_4\cup \cdots \cup C_m )\subseteq \Gamma(u_{111})\cap \Gamma(u_{211}).$

Now  $\Gamma(u_{111})\cap C_2\cap \Gamma(u_{211})=\emptyset$,  and  $\Gamma(u_{111})=[\Gamma(u_{111})\cap C_2]\cup [\Gamma(u_{111})\cap (C_3\cup C_4\cup \cdots \cup C_m )]$, so    $\Gamma(u_{111})\cap \Gamma(u_{211})\subseteq \Gamma(u_{111})\cap (C_3\cup C_4\cup \cdots \cup C_m ) $.
Hence
$$\Gamma(u_{111})\cap (C_3\cup C_4\cup \cdots \cup C_m )= \Gamma(u_{111})\cap \Gamma(u_{211}).$$

Next, suppose  $u_{121}$ were   not adjacent to $u_{x11}$. Then $(u_{121},u_{211},u_{x11})$ would be   a 2-geodesic of $\Gamma$.
Recall   $u_{121}\in \Gamma_2(u_{111})$. By distance-transitivity,        $G_{u_{121}}$
contains    an element $g$  mapping   $u_{111}$ to $u_{x11}$. Thus
$g$ sends  $B_{11}$ to $B_{x1}$,
and hence
  $C_{1}$ to $C_{x}$, contradicting the fact that
$G_{u_{121}}$ fixes $C_1$ setwise. Therefore    $u_{121}$ is  adjacent to $u_{x11}$, that is, $u_{x11}\in \Gamma(u_{121})$.
As   $u_{x11}$ is arbitrary in $\Gamma(u_{111})\cap (C_3\cup C_4\cup \cdots \cup C_m )$, we get
$$\Gamma(u_{111})\cap (C_3\cup C_4\cup \cdots \cup C_m )\subseteq  \Gamma(u_{121}).$$

Now let $w\in \Gamma(u_{111})\cap \Gamma_2(u_{211})$.
Since $u_{221}\in \Gamma_2(u_{211})$ and $G_{u_{211}}$ has an element that moves $w$ to $u_{221}$ while  fixing $C_2$ setwise,   $w$ lies   in some $B_{2i}\in C_2$ with   $i\neq 1$.
A similar argument as above shows   that $\Gamma(u_{111})\cap (C_3\cup C_4\cup \cdots \cup C_m )= \Gamma(u_{111})\cap \Gamma(w)$.
Hence $w$ and $u_{x11}$ are adjacent.

If    $u_{121}$ were   not adjacent to $w$, then $(u_{121},u_{x11},w)$ would be   a 2-geodesic of $\Gamma$.
Then   $G_{u_{121}}$ would contain
  an element sending    $u_{111}$ to $w$, moving
  $C_{1}$ to $C_{2}$, a contradiction. Thus $u_{121}$ is  adjacent to $w$, $w \in \Gamma(u_{121})$  and consequently,   $\Gamma(u_{111})\cap \Gamma_2(u_{211}) \subseteq \Gamma(u_{121})$.
Therefore,  $[\Gamma(u_{111})\cap \Gamma_2(u_{211})]\cup \{u_{211}\}\cup [\Gamma(u_{111})\cap (C_3\cup C_4\cup \cdots \cup C_m )] \subseteq \Gamma(u_{121})$.

From   $\Gamma_{N}\cong \K_{m[b]}$, $\Gamma_N$ has valency $b(m-1)$. Since   $\Gamma$ covers $\Gamma_N$,  $\Gamma$ also has valency $b(m-1)$, with   $|\Gamma(u_{111})\cap C_2|=b$ and  $|\Gamma(u_{111})\cap (C_3\cup C_4\cup \cdots \cup C_m )|=b(m-2)$.
Hence $|\Gamma(u_{111})\cap \Gamma(u_{211})|=b(m-2)$,  $|\Gamma_2(u_{111})\cap \Gamma(u_{211})|=|\Gamma(u_{111})\cap \Gamma_2(u_{211})|=b-1$ and $|\Gamma(u_{121})|=b(m-1)$.

The three  sets $\Gamma(u_{111})\cap \Gamma_2(u_{211})$, $ \{u_{211}\}$ and  $\Gamma(u_{111})\cap (C_3\cup C_4\cup \cdots \cup C_m )$ are pairwise disjoint,
and their union has   $(b-1)+1+b(m-2)=b(m-1)$ elements. Thus this union  equals
 $\Gamma(u_{121})$, and consequently,
$$\Gamma(u_{121})=\Gamma(u_{111}).$$

Since
$u_{121}\in \Gamma_2(u_{111})$ and $\Gamma$ is $(G,2)$-distance-transitive,
we have $\Gamma(z)=\Gamma(u_{111})$  for every vertex $z\in \Gamma_2(u_{111})$.
Therefore $\Gamma$ is antipodal of   diameter 2, and so
$\{u_{111}\}\cup \Gamma_2(u_{111})$ is a nontrivial block of the $G$-action on $V(\Gamma)$. Hence
 $\Gamma\cong \K_{m'[b']}$ for some $m'\geq 3$ and $b'\geq 2$,  contradicting the hypothesis  that $\Gamma$ is not a complete multipartite graph.

Thus   $\Gamma_N\ncong \K_{m[b]}$ for any  $m\geq 3$ and $b\geq 2$.
\end{proof}

\begin{lemma}\label{N < T}
Let $\Gamma $ be a connected  $G$-arc-transitive Cayley graph on  the group  \[
T = \langle a, b \mid a^{4m} = 1,\, b^2 = 1,\, a^b = a^{2m-1} \rangle,
\] where  $T \leq G \leq \operatorname{Aut}(\Gamma)$ and $m \geq 3$.
Let $H=\langle a \rangle$. Let   $N$ be a normal subgroup of $G$ such that $N$ is semiregular on $V(\Gamma)$. Then
one of the following holds:
\begin{itemize}
    \item [(1)]   $N\leq  R(H) < R(T)$ and $\Gamma_N$ is a Cayley graph on  $R(T)/N$;
    \item [(2)]   $ |N:N\cap R(H)|=2$ and $\Gamma_N$ is a circulant on  $R(H) N/N$. Furthermore,
    $G$ contains a normal subgroup $X$ satisfying $|N:X|=2$ or $4$.
    \end{itemize}

\end{lemma}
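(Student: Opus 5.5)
This is a sketch only; the second case is also where the statement is least precise. The plan is to compare $N$ with the regular subgroup $R(T)$ and its index-$2$ cyclic subgroup $R(H)$. The tool is the semiregularity of $N$ together with the product set $NR(T)$. Put $M=N\cap R(T)$ and $K=N\cap R(H)$. Since $N\trianglelefteq G$, the set $NR(T)$ is a subgroup of $G$. Because $R(T)$ is already transitive, $NR(T)$ is transitive with point stabilizer $(NR(T))_v$ of order $|N|\,|R(T)|/(|M|\,|R(T)|)=|N:M|$, and it meets $N$ trivially since $N$ is semiregular. Also, $M$ and $K$ are normal in $R(T)$ and in $R(H)$ respectively, so Lemma~\ref{normal group} describes them. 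The first step is to show that the image of $N$ in $NR(T)/R(T)$ is small, namely $|N:M|\le 2$ when $N\not\le R(T)$. I would argue this by counting $N$-orbits: they are blocks of size $|N|$, and since $R(T)$ is transitive, $R(T)$ permutes them transitively. This shows $|N|$ divides $8m$.

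\emph{Case $N\le R(H)$.} Then $N=\langle a^i\rangle$ is normal in $R(T)$ and semiregular. The $N$-orbits are the right cosets of $N$, and $R(T)/N$ acts regularly on them. Since $\Gamma_N$ is the quotient by these cosets, it is the Cayley graph of $R(T)/N$ on the image of $S$, using Lemma~\ref{T/N} for the quotient structure. This gives (1).

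\emph{Case $N\not\le R(H)$.} I will show $|N:K|=2$. Since $R(H)$ has index $2$ in $NR(H)$ whenever $N\le NR(T)$, we get $|N:N\cap R(H)|\le |NR(H):R(H)|$. The main obstacle is excluding $|N:K|>2$, i.e. $N$ being largely disjoint from $R(H)$. I would attack this through the orbit-counting identity $|N|\cdot(\text{number of }N\text{-orbits})=8m$. The cyclic group $R(H)$ acts on the $N$-orbits, and these orbits are also $K$-invariant. I would try to show that $R(H)$ has at most two orbits on $V(\Gamma)/N$, with each $R(H)$-orbit meeting each $N$-orbit in a $K$-orbit. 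Then $|N|/|K|\le 2$. Once this holds, $R(H)N/N$ is cyclic and, because $|N:K|=2$, acts regularly on the $N$-orbits, making $\Gamma_N$ a circulant.

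For the final clause I would take $X=K$ (or a characteristic subgroup of $K$, e.g.\ its subgroup of index $2$ when needed). The subgroup $K$ is cyclic and has index $2$ in $N$, and I would try to check that it is characteristic in $N$. When $N$ is dihedral, generalized quaternion or semidihedral, the cyclic subgroup of index $2$ is characteristic once its order exceeds $4$. This makes $X$ normal in $G$ with $|N:X|=2$. In the small exceptional cases ($N$ a Klein four-group or $\mathbb Z_4\times\mathbb Z_2$, where $K$ may not be characteristic), I would instead take $X$ to be $\Phi(N)$ or a characteristic subgroup of index $4$, which yields $|N:X|=4$.
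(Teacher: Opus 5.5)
You split the cases the same way the paper's counting does, but there is a genuine gap at the step that carries the lemma. In the case $N\not\le R(H)$ you only say you ``would try to show'' that each $R(H)$-orbit meets each $N$-orbit $B$ in a single $K$-orbit, and nothing in the sketch proves this:
\begin{itemize}
\item that $R(H)$ has at most two orbits on the blocks is automatic;
\item $|N|\mid 8m$ does not bound $|N:M|$, so your ``first step'' is also unproved;
\item ``$R(H)$ has index $2$ in $NR(H)$'' is exactly what has to be shown.
\end{itemize}

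The missing ingredient is the following. Since $R(H)$ is abelian and regular on $H$, the stabiliser $R(H)_B$ of a block $B$ meeting $H$ has order $|B\cap H|$, and it is the kernel of $R(H)$ on the blocks meeting $H$. You must show $R(H)_B=K$, i.e.\ that $R(H)N/N\cong R(H)/K$ acts regularly on the blocks meeting $H$. That is precisely what the paper asserts, and the count then settles both cases:
\begin{itemize}
\item If $B\subseteq H$ or $B\subseteq Hb$, you get $|K|=|B|=|N|$, so $N\le R(H)$. Your case split also has to exclude $N\not\le R(H)$ with every $N$-orbit inside $H$ or $Hb$, and this is how that is done.
\item If $B$ meets both parts, you get $|B\cap H|=|B\cap Hb|=|K|$. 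Hence $|N:K|=2$ and $R(H)N/N$ is regular on $V(\Gamma_N)$.
\end{itemize}

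The equality $R(H)_B=K$ holds when the kernel of $G$ on the $N$-orbits is $N$ itself. This is the case, for instance, when $\Gamma$ is a connected cover of $\Gamma_N$: a kernel element fixing $v$ fixes the unique neighbour of $v$ in each adjacent block, hence every vertex. That is the situation in which the paper applies the lemma. It does not follow from the hypotheses as stated, however, and the statement fails without it. For a counterexample:
\begin{itemize}
\item take $\Gamma=\Cay(T,Hb)\cong\K_{4m,4m}$ with parts $H$ and $Hb$;
\item let $\lambda\colon x\mapsto ax$, and let $c_1,c_2$ act as $\lambda$ on one part and trivially on the other;
\item put $N=\langle\lambda\rangle$ and $G=\langle R(T),c_1,c_2\rangle$.
\end{itemize}
Then $N$ is central in $G$ and semiregular, and $G$ is arc-transitive. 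Yet $\lambda^k\in R(H)$ only when $4m\mid k(2m-2)$, so $|N:N\cap R(H)|\in\{m,2m\}$, and neither (1) nor (2) holds. You therefore have to add such a hypothesis and use it.

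For the last clause, taking $X=K$ requires $K$ to be characteristic in $N$. Your list of possible types for $N$ is not justified: $N$ need not lie in $R(T)$, so Lemma~\ref{normal group} says nothing about it. The list is also incomplete, since $N$ is only known to have a cyclic subgroup of index $2$. For $N\cong\mathbb{Z}_6\times\mathbb{Z}_2$, or a modular $2$-group, $K$ need not be characteristic, and $\Phi(\mathbb{Z}_6\times\mathbb{Z}_2)=1$ has index $12$. The uniform choice, which is the paper's, is $X=\langle g^2\mid g\in N\rangle$:
\begin{itemize}
\item it is characteristic in $N$, hence normal in $G$;
\item it lies in $K$ because $|N:K|=2$;
\item it contains the squares of the cyclic group $K$, which have index at most $2$ in $K$.
\end{itemize}
Hence $|N:X|\in\{2,4\}$.
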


\begin{proof}
Suppose that $N$ is a normal subgroup of $G$ acting   semiregularly on $V(\Gamma)$.  Then
 the set of $N$-orbits, $\mathcal{B}$, is a $G$-invariant partition of $V(\Gamma) = T$. Hence $R(T)$ induces a group $R(T)N/N$ which is a subgroup of $G/N \leq \operatorname{Aut}(\Gamma_N)$. Since $R(T)$ is regular on $V(\Gamma)$, it follows that $R(T)N/N \cong R(T)/(R(T) \cap N)$ is transitive on $V(\Gamma_N)$.

Since $R(H)$ is a normal subgroup of $R(T)$ with $|R(T):R(H)|=2$, it follows that  $H$ and $Hb$ are  the two orbits of $R(H)$ in $V(\Gamma)$ and
$R(b)$ swaps $H$ and $Hb$.
Take   $B\in \mathcal{B}$. Then $Bb\in \mathcal{B}$.

Assume first that $B\subset H$ or $Hb$. Without loss of generality, assume  $B\subseteq H$. Then $Bb\in Hb$. Set $\mathcal{B}_0:=\{B^{R(h)}\mid h\in H\}$ and $\mathcal{B}_1:=\{(Bb)^{R(h)}\mid h\in H\}$.
Then $H=\bigcup_{h\in H}B^h$ and $Hb=\bigcup_{h\in H}(Bb)^{R(h)}$.

Because $N$ is semiregular, it is  regular on each of its orbits, so   $|B|=|Bb|=|N|$. Consequently, $|\mathcal{B}_i||N|=|Hb|=|H|$ for $i=0,1$. As $R(H)$ acts regularly on both $H$ and $Hb$, its image $R(H)N/N\cong R(H)/(R(H)\cap N)$ acts regularly on  $\mathcal{B}_0$ and $\mathcal{B}_1$. Hence   $|R(H)/(R(H)\cap N)|=|\mathcal{B}_i|$ for $i=0,1$. Combining this with $|\mathcal{B}_i||N|=|H|$ yields $|R(H)\cap N|=|N|$. Therefore  $R(H)\cap N=N$, so $N\leq R(H)<R(T)$ and $N$ is cyclic.
It follows that $R(T)/N$ acts regularly on  $V(\Gamma_N)$, and $\Gamma_N$ is a Cayley graph on  the group $R(T)/N$.
This   establishes  part (1).

Now suppose that $B\cap H\neq \varnothing$ and $B\cap Hb\neq \varnothing$.
In this case, $R(H)N/N\cong R(H)/(R(H)\cap N)$ acts  regularly on $\mathcal{B}$. Consequently,
$R(H)\cap N$ is semiregular on $B$ with two orbits $B\cap H$ and $B\cap Hb$, and in particular, $|B\cap H|=|B\cap Hb|$. Thus  $R(H)\cap N$ is a cyclic index 2 normal subgroup of $N$, and  $\Gamma_N$ is a circulant on $R(H) N/N$.

Define $X=\langle g^2|g\in N\rangle$ and $Y=\langle g^2|g\in N\cap R(H)\rangle$.
Then $X$ is a characteristic subgroup of $N$ and so  normal in  $G$. Because $|N:N\cap R(H)|=2$,
we have $g^2\in N\cap R(H)$ for every   $g\in N$, so
$X\leq N\cap R(H)$. As $N\cap R(H)\leq R(H)$ is cyclic, we obtain $|N\cap R(H):Y|=1$ or 2.
Hence $|N:Y|=|N:N\cap R(H)|\times |N\cap R(H):Y|=2$ or 4. Since
$Y\leq X$, it follows that $|N:X|=2$ or 4, which proves  part (2).
\end{proof}

\begin{theorem}\label{(G/N, s)-distance-transitive}
Let $\Gamma$ be a connected $(G,2)$-distance-transitive but not $(G,2)$-arc-transitive Cayley graph on the semi-dihedral group
\[
T = \mathrm{SD}_{8m} = \langle a, b \mid a^{4m} = 1,\, b^2 = 1,\, a^b = a^{2m-1} \rangle,
\]
with $T \leq G \leq \operatorname{Aut}(\Gamma)$ and $m \geq 3$. Assume  $\Gamma \not\cong \mathrm{K}_{x[y]}$ for all $x \geq 3$ and $y \geq 2$. Let $N$ be a normal subgroup of $G$ maximal subject to having at least three orbits. Then $\Gamma$ is a cover of $\Gamma_N$, and $\Gamma_N$ is either a complete graph or a non-complete $(G/N,2)$-distance-transitive but not $(G/N,2)$-arc-transitive graph. Moreover, $G/N$ acts faithfully, quasiprimitively or bi-quasiprimitively on $V(\Gamma_N)$.

If $\Gamma_N$ is non-complete, then $N = \langle a^i \rangle$ for some divisor $i$ of $4m$ with $i \neq 1, 4m$, and one of the following holds:
\begin{itemize}
    \item [(1)] $R(T)/N \cong \mathbb{Z}_2 \times \mathbb{Z}_2$,  $i = 2$;
    \item [(2)] $R(T)/N \cong D_{2i}$,  $i \geq 3$ and $i \mid 2m$;
    \item [(3)] $R(T)/N \cong \mathbb{Z}_4 \times \mathbb{Z}_2$,  $i = 4$ and $i \nmid 2m$;
    \item [(4)] $R(T)/N \cong \mathrm{SD}_{2i}$,  $i \geq 3$, $i \neq 4$ and $i \nmid 2m$.
\end{itemize}

\end{theorem}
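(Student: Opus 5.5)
We apply the normal quotient reduction of Lemma~\ref{DevillersGiudiciLiPraeger2012} with $s=2$ to the chosen $N$. Since $\Gamma$ is vertex-transitive (as $T\le G$), case (ii) is excluded. Case (i) is excluded because $N$ has at least three orbits. Case (iii) is excluded by the hypothesis $\Gamma\not\cong \K_{x[y]}$. Hence case (iv) holds: $N$ is semiregular, $\Gamma$ is a cover of $\Gamma_N$, and $\Gamma_N$ is $(G/N,s')$-distance-transitive with $s'=\min\{2,\operatorname{diam}(\Gamma_N)\}$. If $\operatorname{diam}(\Gamma_N)=1$, then $\Gamma_N$ is complete. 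Otherwise $\Gamma_N$ is non-complete and $(G/N,2)$-distance-transitive.

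It is not $(G/N,2)$-arc-transitive: if it were, Lemma~\ref{LiPan2008} (with $N$ semiregular, $\Gamma$ an $N$-cover, and $G$ arc-transitive) would make $\Gamma$ $(G,2)$-arc-transitive, contrary to hypothesis.

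Faithfulness comes from semiregularity. The kernel $K$ of $G$ on $V(\Gamma_N)$ fixes every $N$-orbit, so $K_v$ fixes $\Gamma(v)$ pointwise, because $\Gamma$ covers $\Gamma_N$ and each neighbour is the unique one in its block. By connectivity $K_v=1$, so $K$ is semiregular with the same orbits as $N$; thus $K=N$ and $G/N$ is faithful.

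For quasiprimitivity or bi-quasiprimitivity, take a nontrivial normal subgroup $M/N$ of $G/N$. Then $M\trianglerighteq N$ properly, so by the maximality of $N$, the subgroup $M$ has at most two orbits on $V(\Gamma)$. Hence $M/N$ has at most two orbits on $V(\Gamma_N)$. Therefore $G/N$ is quasiprimitive, or bi-quasiprimitive if some such $M$ has exactly two orbits.

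Now suppose $\Gamma_N$ is non-complete. Apply Lemma~\ref{N < T} to the semiregular $N$. In case (1), $N\le R(H)$ is cyclic, so $N=\langle a^i\rangle$ for a divisor $i$ of $4m$. Here $i\ne 4m$ since $N\ne1$, and $i\ne 1$ since $N$ has at least three orbits. Lemma~\ref{T/N} then gives the four listed possibilities for $R(T)/N$, using $R(T)\cap N=N$.

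The main obstacle is excluding case (2) of Lemma~\ref{N < T}. In that case we get a normal subgroup $X$ of $G$ with $X<N$ and $|N:X|\in\{2,4\}$, and $\Gamma_N$ is a circulant on $R(H)N/N$. I would try two routes. The first is to use that $\Gamma_N$ is a non-complete $(G/N,2)$-distance-transitive circulant whose group acts quasiprimitively or bi-quasiprimitively, and invoke the classification in \cite{ChenJinLi2019} to restrict $\Gamma_N$ to a short list, such as cycles, $\K_{n,n}-n\K_2$, or $\K_{x[y]}$. The $\K_{x[y]}$ case is ruled out by Lemma~\ref{quot-notkmb-1}. The second route is to show that in the remaining cases $R(T)$ lies in the kernel-adjusted group, forcing $N\le R(H)$. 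Alternatively, one can push the argument through the subgroup $X$ to contradict the existence of a semiregular $N$ not contained in $R(H)$, since such an $N$ meets $Hb$, i.e.\ mixes $H$ and $Hb$. The first thing I would check is whether $N$, having an index-2 cyclic subgroup and meeting both $R(H)$-orbits, forces $\Gamma_N$ to have order dividing $4m$ with too few vertices for a non-complete $2$-distance-transitive quotient. Failing that, I would appeal to the circulant classification.
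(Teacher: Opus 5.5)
\textbf{Verdict: there is a genuine gap.} Your setup matches the paper: the reduction to case (iv) of Lemma~\ref{DevillersGiudiciLiPraeger2012}, the non-$(G/N,2)$-arc-transitivity via Lemma~\ref{LiPan2008}, the (bi-)quasiprimitivity, and case (1) of Lemma~\ref{N < T}. Your faithfulness argument ($K_v$ fixes $\Gamma(v)$ pointwise, so $K_v=1$ and $K=N$) supplies something the paper only asserts. The gap is the step you yourself flag: excluding case (2) of Lemma~\ref{N < T}, i.e.\ $N\not\le R(H)$. You list strategies but carry none out. The one you would check first cannot work. In that case $|N:N\cap R(H)|=2$, so $|V(\Gamma_N)|=8m/|N|=4m/|N\cap R(H)|$, which can be as large as $4m$; order alone gives no contradiction. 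The ``kernel-adjusted group'' and ``push through $X$'' suggestions are not arguments as stated.

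The paper handles this case by your first route, but with a different list and two specific exclusion results. Since $\Gamma_N$ is a non-complete circulant that is $(G/N,2)$-distance-transitive but not $(G/N,2)$-arc-transitive, it cites \cite[Theorem 1]{ChenJinLi2019} to get either $\Gamma_N\cong\K_{m[b]}$ with $m\ge3$, $b\ge2$, or a Paley graph $P(p)$ with $p\equiv1\pmod 4$ prime. The first is excluded by Lemma~\ref{quot-notkmb-1}, the second by \cite[Lemma 3.1]{HuangFengZhouYin2025}.

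Your list omits the Paley graphs, which are exactly the case needing an extra external input. It also includes cycles and $\K_{n,n}-n\K_2$ without saying how to remove them. Cycles are harmless: in girth at least $5$, $(G/N,2)$-distance-transitivity coincides with $(G/N,2)$-arc-transitivity. $\K_{n,n}-n\K_2$ is not harmless. If the part-preserving subgroup of $G/N$ acts $2$-transitively but not $3$-transitively on the $n$ matched pairs, this graph is $(G/N,2)$-distance-transitive without being $(G/N,2)$-arc-transitive. So whichever circulant classification you invoke, check that its hypotheses concern the group $G/N$ and not only $\operatorname{Aut}(\Gamma_N)$, or give a separate argument for this graph.
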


\begin{proof}
Since \(\Gamma\) is a \((G, 2)\)-distance-transitive graph, it is also locally \((G, 2)\)-distance-transitive, and hence Lemma \ref{DevillersGiudiciLiPraeger2012} applies.
Because   \(N\) is   intransitive on \(V(\Gamma)\),   using the \(G\)-arc-transitivity of \(\Gamma\), we know that each non-trivial \(N\)-orbit does not contain any edge of \(\Gamma\).
As \(N\) has at least 3 orbits in \(V(\Gamma)\), the fact \(\Gamma \not\cong \K_{x[y]}\) for any \(x \geq 3\) and \(y \geq 2\) implies that only  Lemma \ref{DevillersGiudiciLiPraeger2012} (iv) occurs.
Hence \(N\) is semiregular on the vertex set and \(\Gamma\) is a cover of \(\Gamma_N\).

By maximality of  \(N\), every  normal subgroup of the quotient group \(G/N\) is  transitive or has exactly two orbits on \(V(\Gamma_N)\), so \(G/N\) is quasiprimitive or bi-quasiprimitive on \(V(\Gamma_N)\).

Since \(\Gamma\) is \((G,2)\)-distance-transitive, we can easily show that \(\Gamma_N\) is \(G/N\)-arc-transitive.
If $\Gamma_N$ is a complete graph, then we are done.
In the remainder, we assume that \(\Gamma_N\) is a non-complete graph.

Take   two pairs of vertices \((C_1, C_3)\) and \((C'_1, C'_3)\) in \(\Gamma_N\) with
\(d_{\Gamma_N}(C_1, C_3) = d_{\Gamma_N}(C'_1, C'_3) = 2\).
Then as \(\Gamma\)  covers the quotient graph  \(\Gamma_N\), there exist \(c_i \in C_i\) and \(c'_i \in C'_i\) such that
\((c_1, c_3)\) and \((c'_1, c'_3)\) are two pairs of vertices of \(\Gamma\) with
\(d_{\Gamma}(c_1, c_3) = d_{\Gamma}(c'_1, c'_3) = 2\).
Since \(\Gamma\) is \((G, 2)\)-distance-transitive, there exists \(\alpha \in G\) such that \((c_1, c_3)^\alpha = (c'_1, c'_3)\).
Hence \((C_1, C_3)^\alpha = (C'_1, C'_3)\). In particular, \(\alpha\) induces an element of \(G/N\) that maps \((C_1, C_3)\) to \((C'_1, C'_3)\).
It follows that \(\Gamma_N\) is \((G/N, 2)\)-distance-transitive.
If \(\Gamma_N\) is \((G/N, 2)\)-arc-transitive, then by Lemma \ref{LiPan2008},  \(\Gamma\) is 2-arc-transitive, a contradiction.
Hence \(\Gamma_N\) is not \((G/N, 2)\)-arc-transitive.

Let \( H = \langle a \rangle \).
By Lemma \ref{N < T},
either $N\leq  R(H) < R(T)$ and \( \Gamma_N \) is  a  Cayley graph over \( R(T)/N \), or $ |N:N\cap R(H)|=2$  and $\Gamma_N$ a  circulant over the cyclic group \( R(H)/(N \cap R(H)) \) and $G$ contains a normal subgroup $X$ satisfying $|N:X|=2$ or $4$.

In the remainder, we assume that $\Gamma_N$ is non-complete.
Suppose for contradiction  $N \nleq R(H)$. Then  \( \Gamma_N \) is a  circulant over  the cyclic group \( R(H)/(N \cap H) \).
Since \( \Gamma_N \) is \( (G/N, 2) \)-distance-transitive but  not  \( (G/N, 2) \)-arc-transitive,  it follows from   \cite[Theorem 1]{ChenJinLi2019} that  \( \Gamma_N \) is  isomorphic to
$ \K_{m[b]}$ for some  $m\geq 3$ and $b\geq 2$, or   a Paley graph $P(p)$ for some  prime number $p$ satisfying $p\equiv 1 \pmod 4$.
However, by Lemma \ref{quot-notkmb-1},   $\Gamma_N\ncong \K_{m[b]}$ for any  $m\geq 3$ and $b\geq 2$, and by  \cite[Lemma 3.1]{HuangFengZhouYin2025}, $\Gamma_N\ncong P(p)$, a contradiction.

Thus   $N\leq  R(H) < R(T)$ and \( \Gamma_N \) is a  Cayley graph over \( R(T)/N \). Then \(\Gamma\) is a cyclic cover of  \(\Gamma_N\).
Moreover, \( N = \langle a^i \rangle \) where \( i \) is a divisor of \( 4m \) and \( i \neq 1, 4m \). By Lemma \ref{T/N}, one of the following holds:
 \begin{itemize}
    \item [(1)]\( R(T)/N \cong \mathbb{Z}_2 \times \mathbb{Z}_2 \), \( i = 2 \);
    \item [(2)]\( R(T)/N \cong {D}_{2i} \), \( i \geq 3 \) and $i \mid 2m$;
    \item [(3)]\( R(T)/N \cong \mathbb{Z}_4 \times \mathbb{Z}_2 \), \( i = 4 \) and $i \nmid 2m$;
    \item [(4)]\( R(T)/N \cong SD_{2i} \), \( i \geq 3 \), \( i \neq 4 \), and $i \nmid 2m$.
\end{itemize}

This completes   the proof.
\end{proof}

\begin{lemma}\label{SD2i}
Let $T = \mathrm{SD}_{8m}$ with $N \leq T \leq G$, and $N \unlhd G$. Suppose there exists an odd prime power $q = p^f$ such that $\operatorname{PSL}(2, q) \leq G/N \leq \operatorname{P}\Gamma L(2, q)$ and $2i = q+1$, where $i \geq 3$, $i \neq 4$, $i \nmid 2m$, and $i \mid 4m$. Then $T/N \not\cong \mathrm{SD}_{2i}$.

\end{lemma}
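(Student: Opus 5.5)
Assume for contradiction that $T/N\cong \mathrm{SD}_{2i}$. Then $T/N$ is a subgroup of $G/N\le \operatorname{P\Gamma L}(2,q)$ of order $2i=q+1$. The plan is to show that no semi-dihedral group of order $q+1$ can embed in $\operatorname{P\Gamma L}(2,q)$ with $q$ odd.

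We work with the intersection $K=(T/N)\cap \operatorname{PSL}(2,q)$. Since $|\operatorname{P\Gamma L}(2,q):\operatorname{PSL}(2,q)|=2f$, the index $|T/N:K|$ divides $2f$. We also have $|T/N|=q+1$, and $p\nmid q+1$.

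Write $\overline{a}=aN$, which has order $i$ with $4\mid i$. Then $\overline{a}^{\,2f}$, and more precisely $\overline{a}^{\,|T/N:K|}$, lies in $\operatorname{PSL}(2,q)$. The main step is to apply Theorem~\ref{Dickson, 8.27} to $K$. Since $p\nmid|K|$, case (1) is excluded. Case (7) forces the $p$-part to be trivial, so $K$ would be cyclic of order dividing $q-1$. Case (8) is impossible by orders. So $K$ is cyclic of order dividing $(q\pm1)/2$, dihedral of order dividing $q\pm1$, or one of $A_4,S_4,A_5$.

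Order considerations then force $K$ to be large. Its order divides $q+1$ and is at least $(q+1)/(2f)$. Intersecting with the parity structure of $\mathrm{SD}_{2i}$ gives the following. A cyclic subgroup $\langle \overline{a}^{\,j}\rangle$ lying in $\operatorname{PSL}(2,q)$ has order dividing $(q+1)/2=i$ only if it sits in the dihedral $D_{q+1}$ normaliser of a nonsplit torus. Any subgroup of $\operatorname{PSL}(2,q)$ of order dividing $q+1$ is either cyclic of order dividing $(q+1)/2$, dihedral of order dividing $q+1$, or one of the small exceptional groups.

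The key contradiction comes from the torus structure. First we show $K\ge \langle \overline{a}^{\,2}\rangle$ or $K$ is exceptional. Next we note that $N_{\operatorname{P\Gamma L}(2,q)}(C)$ for a cyclic $C=\langle\overline a^{\,2}\rangle$ of order $i/2\ge 3$ inside the nonsplit torus is $(\mathbb{Z}_{(q+1)/2}.\mathbb{Z}_2).\mathbb{Z}_{f}$ extended, i.e. $\mathbb{Z}_{q+1}:\mathbb{Z}_{2f}$-type inside $\operatorname{P\Gamma L}(2,q)$, using the embedding $\operatorname{PGL}(2,q)\ge D_{2(q+1)}$. Since $T/N$ normalises $\langle\overline a\rangle$, it lies in this normaliser. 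In that normaliser, the field automorphism $\phi$ of order $f$ acts on the torus element $x$ of order dividing $q+1$ as $x\mapsto x^{p^j}$ (or $x^{-p^j}$ combined with the reflection). The reflection acts as inversion.

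So the conjugation action of $\overline b$ on $\overline a$ must be $\overline a\mapsto \overline a^{\pm p^{j}}$ for some $j<f$. We require this to equal $\overline a^{\,i/2-1}$ modulo $i$. This gives $\pm p^{j}\equiv i/2-1 \pmod{i}$ with $i=(q+1)/2$, i.e. $\pm 2p^j\equiv q-3\pmod{q+1}$, equivalently $2p^j\equiv -4$ or $2p^j\equiv 4 \pmod{q+1}$ after reduction. Because $p^j<q$ and $p^j\mid q$, a direct size argument in the style of the proof that $B(\operatorname{PG}(d-1,q))$ is not a Cayley graph of $\mathrm{SD}_{8m}$ should show this has no solution for $q\ge 7$, $i\ne4$. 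The small cases $q=5$ and $q=11$ with $i=3,6$ are excluded by $4\mid i$ and $i\ne4$. The residual $q$ such as $q=23$, $i=12$ need to be checked directly.

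The hardest step is the claim that $\langle\overline a\rangle$ meets $\operatorname{PSL}(2,q)$ in a subgroup large enough to identify the torus. This is needed because $|T/N:K|$ could absorb up to $2f$. My first attempt would be to take instead $\overline a^{\,2f}$, or the largest power lying in $\operatorname{PGL}(2,q)$. Elements of $\operatorname{P\Gamma L}(2,q)$ of order dividing $q+1$ that are not in $\operatorname{PGL}(2,q)$ have bounded order, roughly $\le 2f(\sqrt q+1)$ via Lang's theorem. Hence for $i=(q+1)/2$ large, $\overline a$ itself lies in $\operatorname{PGL}(2,q)$, and then in the cyclic torus $\mathbb{Z}_{q+1}$ since $i\ge 3$ rules out unipotent and split cases. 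The remaining small $q$ would be finished by hand.
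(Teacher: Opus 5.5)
\textbf{There is a genuine gap, at the step you yourself flag as the hardest.} Nothing in your argument actually places $\bar a$ (or $\bar a^{2}$) inside a nonsplit torus of $\operatorname{PGL}(2,q)$. The individual problems are these.
\begin{itemize}
\item The claim ``$K\ge\langle\bar a^{2}\rangle$ or $K$ is exceptional'' is stated but never proved.
\item The Lang-type bound on orders of elements of $\operatorname{P\Gamma L}(2,q)\setminus\operatorname{PGL}(2,q)$ is only sketched, and it only helps for large $q$.
\item The remaining small $q$ are deferred to ``by hand''.
\item The final congruence is only asserted to have no solution. Its reduction is also miscomputed: $\pm p^{j}\equiv (q-3)/4\pmod{(q+1)/2}$ gives $\pm 2p^{j}\equiv (q-3)/2\pmod{q+1}$, not $\pm 2p^{j}\equiv q-3$.
\end{itemize}
Without $\bar a$ in the torus, the normaliser $\mathbb{Z}_{q+1}:\mathbb{Z}_{2f}$ and the action $x\mapsto x^{\pm p^{j}}$ are never legitimately available. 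As written, this is a plan rather than a proof.

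\textbf{The missing idea is the parity observation the paper uses, which makes all of this unnecessary.}
\begin{itemize}
\item You already have $4\mid i$, so $8\mid 2i=q+1$.
\item If $f$ were even, then $q=(p^{f/2})^{2}\equiv 1\pmod 8$, which is impossible. Hence $f$ is odd, and $\operatorname{P\Gamma L}(2,q)/\operatorname{PGL}(2,q)\cong\mathbb{Z}_f$ has odd order.
\item The abelianisation of $\mathrm{SD}_{2i}$ has order $4$ or $8$, because $[x,y]=x^{i/2-2}$. So the image of $T/N$ in $\mathbb{Z}_f$ is trivial, and $T/N\le\operatorname{PGL}(2,q)$.
\item The paper then cites that $\operatorname{PGL}(2,q)$ has no semidihedral subgroups. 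Equivalently, the Sylow $2$-subgroups of $\operatorname{PGL}(2,q)$ are dihedral, while a Sylow $2$-subgroup of $\mathrm{SD}_{2i}$ is either $\mathbb{Z}_4\times\mathbb{Z}_2$ (when $i/4$ is odd) or semidihedral of order at least $16$. Neither embeds in a dihedral group.
\end{itemize}

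\textbf{Alternatively, your own torus argument closes at once once $T/N\le\operatorname{PGL}(2,q)$.}
\begin{itemize}
\item $\bar a$ has order $i=(q+1)/2\ge 8$, and $\gcd(i,q-1)\le 2$ and $p\nmid i$.
\item So $\bar a$ lies in a unique nonsplit torus $C=C_{\operatorname{PGL}(2,q)}(\bar a)\cong\mathbb{Z}_{q+1}$.
\item Since $\bar b$ normalises $\langle\bar a\rangle$, it normalises $C$. As $N_{\operatorname{PGL}(2,q)}(C)\cong D_{2(q+1)}$, $\bar b$ acts on $C$ by $\pm1$.
\item Then $i/2-1\equiv\pm1\pmod i$, which forces $i=4$, and this case is excluded.
\end{itemize}
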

\begin{proof}
We proceed by contradiction that \(T/N \cong SD_{2i}\), where \(i \geq 3\), \(i \neq 4\), \(i \nmid 2m\). As $i \nmid 2m$ and $i \mid 4m$, we have $4 \mid i$. Thus \(8 \mid 2i = (q+1)\). Since \(\text{PSL}(2, q) \leq G/N \leq \text{P}\Gamma L(2, q)\) for some odd prime power \(q\), \( G/N = \mathrm{PSL}(2, q).o \) and \( o \leq \mathbb{Z}_2 \times \mathbb{Z}_f \). Note that $T/N = SD_{2i} = SD_{q+1} \leq G/N = \mathrm{PSL}(2, q).o$, by Theorem \ref{Dickson, 8.27}, it follows that $SD_{q+1} \nleq \mathrm{PSL}(2, q)$. Clearly, $SD_{q+1} \nleq o \leq \mathbb{Z}_2 \times \mathbb{Z}_f$.

Suppose first that $\mathbb{Z}_2 \nleq o$. Then $o \leq \mathbb{Z}_f$. Since $ SD_{q+1} \cap \operatorname{PSL}(2, q) \trianglelefteq SD_{q+1}$ and \[
SD_{q+1}/(SD_{q+1} \cap \operatorname{PSL}(2, q)) \cong \operatorname{PSL}(2, q) SD_{q+1}/\operatorname{PSL}(2, q) \leq o.\] This implies $SD_{q+1} \cap \operatorname{PSL}(2, q) \neq 1$ and $SD_{q+1}/(SD_{q+1} \cap \operatorname{PSL}(2, q))$ is a cyclic group. Applying Lemma \ref{normal group} and Lemma \ref{T/N}, it follows that $|SD_{q+1}/(SD_{q+1} \cap \operatorname{PSL}(2, q))| = 2$ or $4$. Thus $2 \mid |o|$ and $f$ is even. Since $q = p^f$ and $p$ is a odd prime, it follows that $q \equiv 1 \pmod{4}$. This contradicts the fact \(8 \mid (q+1)\).

Now suppose that $\mathbb{Z}_2 \leq o$. Then $SD_{q+1} \leq \mathrm{PSL}(2, q).o \leq \mathrm{PSL}(2, q).(\mathbb{Z}_2 \times \mathbb{Z}_f)$. Thus \[
SD_{q+1}/(SD_{q+1} \cap \operatorname{PSL}(2, q)) \cong \operatorname{PSL}(2, q) SD_{q+1}/\operatorname{PSL}(2, q) \leq o = \mathbb{Z}_2 \times \mathbb{Z}_f.\]

If $f$ is even, a similar argument yields a contradiction to $8 \mid (q+1)$.

We now turn to the case when $f$ is odd. Since $\mathbb{Z}_2 \times \mathbb{Z}_f$ is abelian group, applying Lemma \ref{normal group} and Lemma \ref{T/N}, it follows that $SD_{q+1}/(SD_{q+1} \cap \operatorname{PSL}(2, q))$ is isomorphic to one of the following groups: $\mathbb{Z}_2 \times \mathbb{Z}_2$, $\mathbb{Z}_4 \times \mathbb{Z}_2$, $\mathbb{Z}_4$, $\mathbb{Z}_2$. Given that $f$ is odd, it follows that $SD_{q+1}/(SD_{q+1} \cap \operatorname{PSL}(2, q))$ is isomorphic to $\mathbb{Z}_2$. Thus $SD_{q+1} \leq \text{P}GL(2, q)$. Applying \cite{CameronOmidiTayfehRezaie2006}, we have the group $\text{P}GL(2, q)$ contains no semidihedral subgroups. This is a contradiction.

We conclude the proof.
\end{proof}

\begin{lemma}\label{complete-1}
Let $\Gamma = \operatorname{Cay}(T, S)$ be a connected $(G,2)$-distance-transitive graph, where $T = SD_{8m}= \langle a, b \mid a^{4m} = 1,\, b^2 = 1,\, a^b = a^{2m-1} \rangle$ with $T \leq G \leq \operatorname{Aut}(\Gamma)$ and $m \geq 3$. Suppose $N$ is a normal subgroup of $G$ having at least three orbits such that $\Gamma$ is a cover of  $\Gamma_N$. If  $\Gamma_N$ is a complete graph, then $\Gamma$ is $(G,2)$-arc-transitive, and $\Gamma$ is isomorphic to one of the following graphs:

\begin{itemize}
    \item [(1)]    $ C_{8m}$.
    \item [(2)]    $  \K_{4m,4m} - 4m\K_2$.
   \item [(3)]   $ X_1(4,q)$, where $q \equiv 3 \pmod{4}$ and $q = 2m-1$ is a prime power.

    \end{itemize}

\end{lemma}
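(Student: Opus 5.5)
First I show $\Gamma$ is $(G,2)$-arc-transitive. Since $\Gamma$ covers $\Gamma_N\cong \K_n$ (with $n=|V(\Gamma_N)|\ge 3$), $\Gamma_N$ is $G/N$-arc-transitive and complete. Let $v\in V(\Gamma)$ lie in the block $B$. A triangle through an arc $(u,v,w)$ of $\Gamma$ projects to a triangle in $\Gamma_N$, and conversely the neighbours of $v$ lie in distinct blocks. I would argue as follows: if $\Gamma$ has girth $3$, then by $2$-distance-transitivity every pair $u,w\in\Gamma(v)$ at distance $2$ is equivalent under $G_v$. Since $N$ is semiregular and $G_v\cong (G/N)_B$ acts on $\Gamma(v)$ exactly as $(G/N)_B$ acts on $V(\Gamma_N)\setminus\{B\}$, the action of $G_v$ on $\Gamma(v)$ is $2$-homogeneous or $2$-transitive on the $n-1$ other blocks. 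If some pair of neighbours were adjacent, this would force $[\Gamma(v)]$ to be complete, and hence $\Gamma\cong \K_n$, contradicting $N\ne1$. So no two neighbours of $v$ are adjacent. Each pair of distinct neighbours then forms a $2$-geodesic, and $(G,2)$-distance-transitivity gives transitivity on $2$-arcs. (Alternatively, apply Lemma \ref{LiPan2008} directly: $\K_n$ is $(G/N,2)$-arc-transitive as soon as $(G/N)_B$ is $2$-transitive on the remaining vertices, which follows from distance-transitivity on $2$-geodesics.)

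Next I use that $\Gamma$ is a $2$-arc-transitive Cayley graph of $\mathrm{SD}_{8m}$, hence a $2$-arc-transitive bicirculant, and invoke the classification \cite{Jin2023} of $2$-arc-transitive bicirculants. This yields a finite list: cycles, $\K_{2n}$, $\K_{n,n}$, $\K_{n,n}-n\K_2$, $B(\operatorname{PG})$, $B'(\operatorname{PG})$, $X_1(4,q)$, $X_2(3)$, $AT_D(4,6)$, $\K^{2d}_{q+1}$, $X(2,2)$, $CQ(k,n)$, the Petersen graph, and similar. From this list I keep only the graphs that are normal covers of a complete graph $\K_n$ with $n<8m$ and $n\ge 3$ (the covering-group kernel $N$ being cyclic by Lemma \ref{N < T} or of index $\le2$ over cyclic). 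Cyclic covers of $\K_3$ give $C_{8m}$; $\K_{4m,4m}-4m\K_2$ is a $2$-cover of $\K_{4m}$; $X_1(4,q)$ is a $4$-cover of $\K_{q+1}$ with $q+1=2m$; $\K^{2d}_{q+1}$ covers $\K_{q+1,q+1}-(q+1)\K_2$ rather than a complete graph, so it drops out here. $X_2(3)$, $AT_D(4,6)$, $X(2,2)$ and $CQ(k,n)$ are excluded by order: for example, $AT_D(4,6)$ has order $24=8\cdot 3$, so it must be checked separately.

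Finally, I verify that each survivor on order $8m$ with $m\ge3$ actually admits a regular $\mathrm{SD}_{8m}$ and that the quotient is complete. For $C_{8m}$ this is clear, since $\operatorname{Aut}=D_{16m}\ge \mathrm{SD}_{8m}$. For $\K_{4m,4m}-4m\K_2$, I use $\operatorname{Aut}=S_{4m}\times\mathbb{Z}_2$. For $X_1(4,q)$, I use $8m=4(q+1)$ and the known automorphism group $\mathrm{PSL}(2,q)$-by-$\mathbb{Z}_4$-type. The main obstacle is this last elimination and confirmation step: ruling out the sporadic small covers ($AT_D(4,6)$, $X(2,2)$, $CQ(1,3)$ of order $24$) as $\mathrm{SD}_{24}$-Cayley graphs with complete normal quotient. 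I would try to handle it by checking that in each case the quotient by the maximal intransitive normal subgroup is not complete (e.g.\ $X(2,2)$ covers $\K_{4,4}$), or that the automorphism group has no regular $\mathrm{SD}_{24}$. For this I would use Lemma \ref{normal group} together with the requirement that $N\le\langle a\rangle$ be cyclic of order $8m/n$.
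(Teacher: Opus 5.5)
\paragraph{Gap in the first step.} From $\Gamma_N\cong\K_n$ being $G/N$-arc-transitive you only get that $G/N$ is $2$-transitive on the blocks. Equivalently, $G_v\cong(G/N)_B$ is \emph{transitive} on $\Gamma(v)$. You need $G_v$ to be $2$-homogeneous on $\Gamma(v)$, which is $3$-homogeneity of $G/N$ on $V(\Gamma_N)$; that is essentially what the lemma asserts. $(G,2)$-distance-transitivity does not control how $G_v$ acts on pairs of neighbours of $v$.

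The implication you use is in fact false in general. The icosahedron is a distance-transitive $2$-fold cover of $\K_6$: take the quotient by the central antipodal $\mathbb{Z}_2$ of $A_5\times\mathbb{Z}_2$. Then $G/N\cong A_5$ is $2$- but not $3$-transitive on the six blocks, the local graph is a pentagon, and the graph is not $2$-arc-transitive. Even in girth at least $4$, transitivity on pairs at distance $2$ gives $2$-arc-transitivity only if $G_{uw}$ is transitive on $\Gamma(u)\cap\Gamma(w)$.

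So the semidihedral group must enter at this point, and your step never uses it. This is where the paper spends most of its proof. It assumes $G/N$ is $2$-transitive but not $3$-transitive and rules this out using the regular subgroup: a cyclic subgroup with two orbits, Taylor's and M\"uller's lists, the affine degree-$16$ cases, the $\operatorname{PSL}(2,q)$ case via Lemma~\ref{SD2i} and regular dihedral subgroups, and $\mathbb{Z}_4\times\mathbb{Z}_2\not\le\operatorname{PGL}(2,7)$. Only in the $3$-transitive case does it use Lemma~\ref{LiPan2008}, as your alternative does.

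\paragraph{The last step fails too.} The order-$24$ case you flag is a genuine example, not something that can be excluded. Take $\operatorname{GP}(12,5)$ ($\cong CQ(1,3)$, the Nauru graph) and the maps
$\rho\colon u_i\mapsto u_{i+1},\ v_i\mapsto v_{i+1}$ and $\sigma\colon u_i\mapsto v_{5i},\ v_i\mapsto u_{5i}$.
These are automorphisms with $\rho^{12}=\sigma^2=1$ and $\sigma\rho\sigma=\rho^5$, and $\langle\rho,\sigma\rangle$ is regular. So this $2$-arc-transitive cubic graph is a Cayley graph of $\mathrm{SD}_{24}$ as the paper defines it. Note that for odd $m$ that group is $\mathbb{Z}_4\times D_{2m}$, so a ``no regular $\mathrm{SD}_{24}$'' argument is unavailable.

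Now write the graph as $\Cay(S_4,\{(12),(13),(14)\})$. Its automorphism group is $\lambda(S_3)\times R(S_4)$, where $\lambda$ is left multiplication by $S_3=\mathrm{Sym}\{2,3,4\}$. The subgroup $N=\lambda(S_3)$ is normal and semiregular with $4$ orbits; it is even maximal subject to having at least three orbits. The graph covers $\Gamma_N\cong\K_4$. Hence for $m=3$ the statement as written fails.

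The paper's proof misses this because its $2$-arc-transitive branch only treats cyclic covers, via \cite[Lemma 4.1]{Jin2023}, and ignores case (2) of Lemma~\ref{N < T}. The metacyclic covers with $|N:N\cap R(H)|=2$ have to be classified rather than ruled out.

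\paragraph{A smaller point.} Your test ``$\K^{2d}_{q+1}$ covers $\K_{q+1,q+1}-(q+1)\K_2$, so it drops out'' is not valid, because one graph can be a normal cover of several quotients. The paper itself notes $X_1(4,q)\cong\K^{4}_{q+1}$.
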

\begin{proof}
Suppose that  $\Gamma_N \cong \K_r$ with $r\geq 3$.
If $r=3$, then $\Gamma_N$ has valency 2. Since $\Gamma$ is a cover of  $\Gamma_N$, $\Gamma_N$ has valency 2, and so
$\Gamma\cong C_{8m}$, case (1) holds.
In the remainder, we assume that $r\geq 4$.

Let $H=\langle a \rangle$. Then   $H\cong \mathbb{Z}_{4m}$ has exactly two orbits on $V(\Gamma)$, and so $\Gamma$ is a bicircualnt.
Because     $\Gamma$  is a Cayley graph over $T = SD_{8m}$ with $m\geq 3$, $\Gamma$
has at least $24$ vertices.
Since $\Gamma = \operatorname{Cay}(T, S)$ is   $(G,2)$-distance-transitive, $\Gamma_N$ is $G/N$-arc-transitive, and so $G/N$ acts 2-transitively on $V(\Gamma_N)$.

If  \(\Gamma_N\) is  $(G/N, 2)$-arc-transitive, then   by Lemma \ref{LiPan2008},  $\Gamma$ is  $(G, 2)$-arc-transitive.
Since $\Gamma$ is a  bicirculant as a   cyclic cover of  $\K_r$,   it follows from   \cite[Lemma 4.1]{Jin2023} that  $\Gamma$ is isomorphic to either $\K_{4m,4m} - 4m\K_2$ or $X_1(4,q)$, where $q =r-1= 2m-1$ is a prime power and $q \equiv 3 \pmod{4}$.

Now suppose that $\Gamma_N$ is not $(G/N,2)$-arc-transitive. Then $G/N$ acts $2$-transitively but not $3$-transitively on $V(\Gamma_N)$.
Hence $G/N$ is one of the 2-transitive groups in \cite[Theorem 4.1]{Taylor1992} which are  not 3-transitive.

By Lemma \ref{N < T}, either
\begin{itemize}
    \item [(i)]   $N\leq  R(H) < R(T)$ and $\Gamma_N$ is a Cayley graph on $R(T)/N$; or
    \item [(ii)]   $ |N:N\cap R(H)|=2$ and $\Gamma_N$ is a circulant on  $R(H) N/N$. Furthermore,
    $G$ contains a normal subgroup $X$ satisfying $|N:X|=2$ or $4$.
    \end{itemize}

By Lemma \ref{|N|=p},   the covering transformation group $N$ can be taken as a cyclic group $\mathbb{Z}_p$ for some prime $p$.

Assume    $p=2$. If (ii) holds, then $ |N\cap R(H)|=1$ and $\Gamma$ is a standard double cover of  $\Gamma_N\cong \K_r$, and
 $\Gamma\cong \K_{4m,4m} - 4m\K_2$. Nevertheless, no arc-transitive subgroup of $\operatorname{Aut}(\K_{4m,4m} - 4m\K_2)$ that contains  a  regular subgroup  isomorphic to $D_{8m}$ or $SD_{8m}$, yielding a contradiction.

Assume    $p$ is an odd prime.
Then only   case (i) holds.
Since    $N = \langle a^i \rangle$, where $i$ divides $4m$ and $i \neq 1, 4m$, by Lemma \ref{T/N}, one of the following holds:
\begin{itemize}
    \item [(a)] $T/N \cong \mathbb{Z}_2 \times \mathbb{Z}_2$ and $i = 2$;
    \item [(b)] $T/N \cong D_{2i}$, where $i \geq 3$ and $i \mid 2m$;
    \item [(c)] $T/N \cong \mathbb{Z}_4 \times \mathbb{Z}_2$, where $i = 4$ and $i \nmid 2m$;
    \item [(d)] $T/N \cong SD_{2i}$, where $i \geq 3$, $i \neq 4$ and $i \nmid 2m$.
\end{itemize}

Moreover,     $G/N$ contains a cyclic subgroup $R(H)/N$ with two orbits on $V(\Gamma_N)$. Consequently, $G/N$ falls into one of the cases classified in \cite[Theorem 3.3]{Muller2013}.
We use Lemma  \ref{3-trans} to eliminate the $3$-transitive candidates, so one of the cases in \cite[Theorem 4.1]{Taylor1992} arises:

\begin{enumerate}
\item[(1)(b)] \(r = q + 1\) and \(\text{PSL}(2, q) \leq G/N \leq \text{P}\Gamma L(2, q)\) for some odd prime power \(q\geq 5\).
\item[(2)(a)] \(G/N \leq A\Gamma L(1, 16)\), the group of affine transformations of \(F_{16}\) extended by the automorphisms of \(F_{16}\).
\item[(2)(b)] \(SL(4, 2) \trianglelefteq (G/N)_0\), \(r = 16\).
\item[(2)(e)] \((G/N)_0 \cong A_6\) or \(A_7\), \(r = 16\).
\end{enumerate}

We first consider  cases: $(2)(a)$, $(2)(b)$ and $(2)(e)$. Then   \(Q := \text{soc}(G/N) \cong \mathbb{Z}_2^4\), \(\Gamma_N \cong \K_{r}\) with $r = 16$.

Since $r = 16$,  only cases $(b)$ and $(d)$ are possible.
Let $N \leq Y \trianglelefteq G$ such that $Y/N = Q$. Then $Y = N.Q$ acts transitively on $V(\Gamma)$.


Let $Y_2$ be a Sylow $2$-subgroup of $Y$. Then $Y_2 \cong \mathbb{Z}_2^4$ and $Y = N : Y_2 \cong \mathbb{Z}_p . \mathbb{Z}_2^4$. Set $C := C_Y(N)$. Then $C = N \times C_2$, where $C_2$ denotes a Sylow $2$-subgroup of $C$. By the $N/C$-theorem \cite[Satz 4.5]{Hup67}, $Y/C$ is isomorphic to a subgroup of $\operatorname{Aut}(N) \cong \mathbb{Z}_{p-1}$, so $Y/C$ is cyclic. Since $Y_2 \cong \mathbb{Z}_2^4$, we deduce that $8 \mid |C|$, and hence $C_2 \cong \mathbb{Z}_2^3$ or $\mathbb{Z}_2^4$.

As $Y \trianglelefteq G$, we have $C \trianglelefteq G$. Since $C_2$ is characteristic in $C$, it follows that $C_2 \trianglelefteq G$. Observe that $C_2$ has at least three orbits on $V(\Gamma)$. Then  $\Gamma$ is a cover of the normal quotient $\Gamma_{C_2}$, then  either     $C_2\leq  R(H) < R(T)$; or   $ |C_2:C_2\cap R(H)|=2$.
Since $C_2 \cong \mathbb{Z}_2^3$ or $\mathbb{Z}_2^4$, neither is possible.

It remains to consider  case $(1)(b)$. In this case, $r = q+1$ and $\operatorname{PSL}(2,q) \leq G/N \leq \operatorname{P}\Gamma L(2,q)$ for some odd prime power $q = p^f$.
By  Lemma \ref{SD2i},  case $(d)$  cannot occur.
It remains to show that the remaining three cases are impossible.

Suppose case $(a)$ occurs, so that $T/N \cong \mathbb{Z}_2\times\mathbb{Z}_2$. Then  $\Gamma_N \cong \K_4$, and   $\Gamma$ is a cyclic $(G,2)$-distance-transitive cover of $\K_4$. Hence $\Gamma$  is necessarily a cyclic cubic arc-transitive cover of $\K_4$. By \cite[Theorem 6.1]{FengKwak2007}, $\Gamma$ is either the $3$-cube $Q_3$ or the generalized Petersen graph $P(8,3)$. However, $Q_3$ has $8$ vertices and $P(8,3)$ has $16$ vertices. This contradicts the requirement that $\Gamma$ has at least $24$ vertices.

Next consider case $(b)$, where $T/N \cong D_{2i}$ with $i\geq 3$ and $i\mid 2m$. In this case, $G/N = \operatorname{PSL}(2,q).o$ contains the regular dihedral subgroup $T/N$. By \cite[Theorem 3.3]{SongLiZhang2014},  $r = q+1$ and  $q = e^f \equiv 3\pmod{4}$ for some prime $e$, and $o \leq \mathbb{Z}_2\times\mathbb{Z}_f$ does not contain the diagonal automorphism of $\operatorname{PSL}(2,q)$. Since  $N\cong \mathbb{Z}_p$ and  since $T/N \leq \operatorname{PSL}(2,q)$, we deduce that $T \cong D_{p(q+1)}$. This contradicts the fact that $T$ is a semi-dihedral group.

Finally, we deal with case $(c)$, in which $T/N \cong \mathbb{Z}_4\times\mathbb{Z}_2$. This yields $\Gamma_N \cong \K_8$ and $q=7$. Note that $\operatorname{P}\Gamma L(2,7)=\operatorname{PGL}(2,7)$, so  $\mathbb{Z}_4\times\mathbb{Z}_2 \leq G/N \leq \operatorname{PGL}(2,7)$. However, by \cite{CameronOmidiTayfehRezaie2006}, $\operatorname{PGL}(2,q)$ admits no subgroups isomorphic to $\mathbb{Z}_4\times\mathbb{Z}_2$. We thus reach a contradiction.


We conclude the proof.
\end{proof}

\begin{lemma}\label{bi-quasiprimitive}
Let $\Gamma$ be a connected $(G,2)$-distance-transitive Cayley graph on  a semi-dihedral group $T $  with   $R(T) \leq G \leq \operatorname{Aut}(\Gamma)$. If $G$ acts bi-quasiprimitively on $V(\Gamma)$, then $\Gamma$ is isomorphic to one of the following graphs: $\K_{4m,4m}$, $\K_{4m,4m}-4m\K_2$, $B(\mathrm{PG}(d-1,q))$, or $B'(\mathrm{PG}(d-1,q))$, where $d \geq 3$, $q$ is a prime power, and $\frac{q^d - 1}{q - 1} = 4m$. Moreover, $\Gamma$ is $(G, 2)$-arc-transitive.
\end{lemma}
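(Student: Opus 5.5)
We plan to reduce to the known classification of $2$-arc-transitive bicirculants. Let $M$ be a normal subgroup of $G$ with exactly two orbits $\Delta_1,\Delta_2$ on $V(\Gamma)$; such an $M$ exists by bi-quasiprimitivity. The two orbits form a $G$-invariant partition into blocks. Since $\Gamma$ is $G$-arc-transitive and $M$ is intransitive, no edge lies inside an $M$-orbit. Hence $\Gamma$ is bipartite with bipartition $\{\Delta_1,\Delta_2\}$. Every vertex at distance $2$ from $u\in\Delta_1$ then lies in $\Delta_1$.

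The first key step is to show that $\Gamma$ is in fact $(G,2)$-arc-transitive. A bipartite graph has girth at least $4$, so the only obstruction to $2$-arc-transitivity is the existence of several common neighbours of $u$ and a vertex $w\in\Gamma_2(u)$. We will argue as follows. $G_u$ is transitive on $\Gamma(u)$ by arc-transitivity and on $\Gamma_2(u)$ by $2$-distance-transitivity. We must still show that $G_{uv}$ is transitive on $\Gamma(v)\setminus\{u\}$. If $\Gamma$ is $\K_{4m,4m}$ this is immediate, since $G$ is $2$-transitive on each part. Otherwise we will use the fact that $\Gamma$ is a bicirculant, with $R(\langle a\rangle)$ having two orbits. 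If these orbits are the two parts, then $G^+$ (the index-$2$ subgroup fixing the parts) acts on each part containing a regular cyclic subgroup of order $4m$. We will show $G^+$ acts faithfully and quasiprimitively on each part, via the standard bi-quasiprimitive structure theory \cite{Praeger1993}. Then \cite[Theorem 3.3]{Muller2013} (or the $B$-group property of cyclic groups of composite order) forces $G^+$ to be $2$-transitive on each part. From this, $2$-arc-transitivity follows: $G_v$ is $2$-transitive on $\Gamma(v)$, which is the image of $G_v$ acting $2$-transitively on the opposite part restricted to the neighbourhood.

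Once $\Gamma$ is $(G,2)$-arc-transitive, we apply \cite{Jin2023} (the classification of $2$-arc-transitive bicirculants) together with Lemma~\ref{DevillersGiudiciJin2022}. The candidates in the bi-quasiprimitive case are $\K_{4m,4m}$, $\K_{4m,4m}-4m\K_2$, and $B(\mathrm{PG}(d-1,q))$, $B'(\mathrm{PG}(d-1,q))$ with $\frac{q^d-1}{q-1}=4m$. We also need to exclude cycles, $B(H_{11})$ and $B'(H_{11})$. Cycles $C_{8m}$ have $G$ dihedral, which is not bi-quasiprimitive for $8m\ge 24$: the rotation subgroup $\langle\rho^2\rangle$ is normal with two orbits, but $\langle\rho^4\rangle$ is also normal with four orbits. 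The $H_{11}$ graphs are excluded because $22\ne 8m$. The sporadic small orders in \cite{Jin2023} are removed by the order condition $8m\ge 24$ with $4m$ even.

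The main obstacle is the case where the two orbits of $\langle a\rangle$ are not the bipartition, i.e.\ $\langle a\rangle$ meets both parts. In that case we will instead take $T^+=T\cap G^+$. This is an index-$2$ subgroup of $T$ regular on each part, namely one of $\langle a\rangle$, $\langle a^2,b\rangle\cong D_{4m}$ or $\langle a^2,ab\rangle\cong Q_{4m}$ by Lemma~\ref{normal group}. All of these are $B$-groups (cyclic, dihedral, dicyclic), so $G^+$ is again $2$-transitive on each part once quasiprimitivity on the parts is established. Establishing that quasiprimitivity is the delicate step. We would first try to show it using maximality: a normal subgroup of $G^+$ intransitive on a part would yield, after taking its normal closure in $G$, a normal subgroup of $G$ with more than two orbits, contradicting bi-quasiprimitivity.
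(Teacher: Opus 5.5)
Your route differs from the paper's. The paper never proves $2$-arc-transitivity first. It applies \cite[Propositions 5.1 and 5.2]{DevillersGiudiciJin2022}, which classify arc-transitive bicirculants with a bi-quasiprimitive group, split according to whether the $\langle a\rangle$-orbits are the bipartite halves. It then discards $B(H(11))$, $B'(H(11))$ and $G(2p,r)$ because $8\mid 8m$, and only afterwards checks $(G,2)$-arc-transitivity candidate by candidate.

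Your plan instead hinges on showing that $G^+$ is $2$-transitive on each half $\Delta_1,\Delta_2$, and that step is not established. Bi-quasiprimitivity of $G$ does not make $G^+$ faithful or quasiprimitive on the halves. Praeger's structure theory for bi-quasiprimitive groups explicitly allows $G^+$ to be unfaithful on a half (e.g.\ $S_n\wr S_2$ on $\K_{n,n}$), or faithful but not quasiprimitive. Your normal-closure argument does not exclude this. Take $L\trianglelefteq G^+$ intransitive on $\Delta_1$ and $g\in G\setminus G^+$. The $L^g$-orbits on $\Delta_1$ are the $g$-images of the $L$-orbits on $\Delta_2$. So if $L$ is transitive on $\Delta_2$, then $LL^g\trianglelefteq G$ is transitive on both halves, and there is no contradiction with bi-quasiprimitivity. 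Moreover, the $B$-group property applies to primitive groups, so even quasiprimitivity on the halves would need a further upgrade to primitivity.

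Second, even granting $2$-transitivity of $G^+$ on both halves, your deduction of $2$-arc-transitivity is wrong as stated. For $v\in\Delta_1$, the stabilizer $G_v$ preserves $\Gamma(v)\subsetneq\Delta_2$, so it is not even transitive on $\Delta_2$ unless $\Gamma\cong\K_{n,n}$. Even for $\K_{n,n}$, transitivity of $G_{uv}$ on $\Delta_1\setminus\{u\}$ needs an argument; the paper cites \cite{CorrJinSchneider2017} for it.

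What $2$-transitivity on the halves actually gives is that $\Gamma$ is $\K_{n,n}$, $\K_{n,n}-n\K_2$, or the incidence graph of a symmetric design with a point-$2$-transitive group. In the last case the block stabilizer need not be $2$-transitive on its block. For the Higman $(176,50,14)$ design, $\mathrm{HS}{:}2$ yields a distance-transitive incidence graph. Its vertex stabilizer $\mathrm{PSU}(3,5){:}2$ acts on the $50$ neighbours as on the Hoffman--Singleton vertices, with rank $3$, so that graph is $2$-distance-transitive but not $2$-arc-transitive. To reach the hypothesis of \cite{Jin2023} you would therefore need Kantor's classification of such designs together with the constraints from the regular subgroups of order $4m$. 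The paper's use of the arc-transitive bicirculant classification avoids both gaps.
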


\begin{proof}
Suppose that    $G$ is bi-quasiprimitive on $V(\Gamma)$. By the definition of bi-quasiprimitive group actions,  $G$ has a minimal normal subgroup $M$ with  exactly two orbits on $V(\Gamma)$. Since  $\Gamma$ is $G$-arc-transitive, each $M$-orbit does not contain any edge of $\Gamma$. This implies that  $\Gamma$ is bipartite, with its two bipartite halves precisely being the two $M$-orbits of $V(\Gamma)$. Denote these two bipartite parts by $\Delta_0$ and $\Delta_1$.

As $\Gamma$ is a Cayley graph over $T$, the right regular representation  $R(T)$ acts regularly on $V(\Gamma)$. Recall that the semi-dihedral group of order $8m$ has the standard representation    $T = \langle a, b \mid a^{4m} = b^2 = 1, a^b = a^{2m-1} \rangle$ where   $m \geq 3$.  Let $H = \langle a \rangle \cong \mathbb{Z}_{4m}$. Then $R(H)$ is a subgroup of $R(T)$ of index $2$ with   exactly two orbits on $V(\Gamma)$. Denote these two $R(H)$-orbits by $H_0$ and $H_1$. In particular, $\Gamma$ is a bicirculant relative to   $H$.

If the two orbits $H_0$ and $H_1$ of the cyclic subgroup $H$ coincide with the bipartite halves $\Delta_0$ and $\Delta_1$ of $\Gamma$, then $\Gamma$ falls into   the classification family established  in \cite[Proposition 5.1]{DevillersGiudiciJin2022}. Otherwise, if $H_0$ and $H_1$ differ from the bipartite halves $\Delta_0$ and $\Delta_1$, then $\Gamma$ is determined by \cite[Proposition 5.2]{DevillersGiudiciJin2022}. Combining these two cases, $\Gamma$ is isomorphic to one of the following graphs: $\K_{4m,4m}$, $\K_{4m,4m}-4m\K_2$, $B(H(11))$, $B'(H(11))$, $G(2p,r)$, $B(\mathrm{PG}(d-1,q))$, or $B'(\mathrm{PG}(d-1,q))$, where $d \geq 3$, $q$ is a prime power, and $\frac{q^d - 1}{q - 1} = 4m$.

Note that the order of the semi-dihedral group $|T|=8m$ is divisible by $8$, so the graphs $B(H(11))$, $B'(H(11))$ and $G(2p,r)$ can be excluded. This leaves only the following candidates for $\Gamma$: $\K_{4m,4m}$, $\K_{4m,4m}-4m\K_2$, $B(\mathrm{PG}(d-1,q))$ and $B'(\mathrm{PG}(d-1,q))$, where $d \geq 3$, $q$ is a prime power and $\frac{q^d - 1}{q - 1} = 4m$.

It remains to verify the $(G,2)$-arc-transitivity of all candidate graphs.
For the graph $\K_{4m,4m} - 4m\K_2$,  its  $(G,2)$-arc-transitivity is guaranteed by \cite[Lemma 3.6]{JinLiTan2025}. For the complete bipartite graph $\K_{4m,4m}$, it follows from  \cite[Lemma 3.3]{CorrJinSchneider2017} that  $(G,2)$-distance-transitivity  implies $(G,2)$-arc-transitivity. If $\Gamma$ is isomorphic to $B(\operatorname{PG}(d-1,q))$ or $ B'(\operatorname{PG}(d-1,q))$,  then $\operatorname{PGL}(d,q) \leq G^+ = G_{\Delta_0} = G_{\Delta_1} \leq \operatorname{P\Gamma L}(d,q)$, which means that $\Gamma$ is $(G,2)$-arc-transitive.
\end{proof}

\bigskip

We now establish  our main theorem, which gives a complete classification of  connected $(G, 2)$-distance-transitive Cayley graph on   semi-dihedral groups.
\bigskip

	\begin{proof}[\bf{Proof of Theorem \ref{thm1.1}}.]

Let \(\Gamma\) be a connected \((G, 2)\)-distance-transitive Cayley graph on  the semi-dihedral group $T = SD_{8m} = \langle a, b \mid a^{4m} = 1, \, b^2 = 1, \, a^b = a^{2m-1} \rangle$  with $m \geq 3$ and \( T \leq G \leq \text{Aut}(\Gamma) \). Then \(\Gamma\) has at least $24$ vertices.
If $\Gamma$ has valency 2, then it is isomorphic to $C_{8m}$. In the remainder, we assume that
$\Gamma$ has valency at least 3.

Let $H = \langle a \rangle \cong \mathbb{Z}_{4m}$. Then $R(H)$ is an index-2  subgroup of $R(T)$ and partitions  $V(\Gamma)$  into exactly two orbits, denoted by   $H_0$ and $H_1$. Consequently, $\Gamma$ is a bicirculant relative to   $H$.

Suppose that \( G \) acts quasiprimitively on \( V(\Gamma) \). Since $\Gamma$ is  $G$-arc-transitive and  $H\cong \mathbb{Z}_{4m}$, it follows from Lemma \ref{DevillersGiudiciJin2022} that   $\Gamma$ is isomorphic to either $\K_{4m[2]}$ or $\K_{4m,4m} - 4m\K_2$.

It remains to consider the case that $G$ is not quasiprimitive on $V(\Gamma)$. Under this assumption $G$ has at least one intransitive normal subgroup, and this subgroup has at least $2$ orbits on $V(\Gamma)$.

If every nontrivial normal subgroup of \(G\) has at most two orbits on \(V(\Gamma)\), and if there exists a nontrivial normal subgroup of \(G\) with exactly two orbits, then \(G\) acts bi-quasiprimitively on \(V(\Gamma)\) and \(\Gamma\) is bipartite. By Lemma \ref{bi-quasiprimitive}, \(\Gamma\) is isomorphic to one of the following graphs: \(\mathrm{K}_{4m,4m}\), \(\mathrm{K}_{4m,4m}-4m\mathrm{K}_2\), \(B(\mathrm{PG}(d-1,q))\), and \(B'(\mathrm{PG}(d-1,q))\), where \(d\ge 3\), \(q\) is a prime power, and \(\frac{q^d-1}{q-1}=4m\).

Let \(N\) be a normal subgroup of \(G\) that is maximal among all normal subgroups with at least three orbits on \(V(\Gamma)\). As \(\Gamma\) is \((G,2)\)-distance-transitive, it is locally \((G,2)\)-distance-transitive. Applying Lemma \ref{DevillersGiudiciLiPraeger2012}, either \(\Gamma\cong \mathrm{K}_{x[y]}\) for integers \(x\ge3,y\ge2\) satisfying \(xy=8m\), or \(\Gamma\) is a cover  of \(\Gamma_N\). The first case has already been settled.

We now assume that \(\Gamma \not\cong \mathrm{K}_{x[y]}\) for all integers \(x\ge 3\) and \(y\ge 2\), so that \(\Gamma\) is a cover of \(\Gamma_N\). By Theorem \ref{(G/N, s)-distance-transitive}, \(\Gamma_N\) is a quotient graph that is either complete or a noncomplete \((G/N,2)\)-distance-transitive graph. Furthermore, the quotient group \(G/N\) acts faithfully on \(V(\Gamma_N)\) and is either quasiprimitive or bi-quasiprimitive on this vertex set.

If \( \Gamma_N \) is isomorphic to a complete graph, then by Lemma \ref{complete-1}, $\Gamma$ is isomorphic to the graph \(\K_{4m, 4m} - 4m\K_2\) or \(X_1(4, q)\) where \(q \equiv 3 \pmod{4}\) and \(q = 2m - 1\).

Now suppose that \( \Gamma_N \) is a \((G/N, 2)\)-distance-transitive
non-complete graph,  and \(G/N\) is faithful and either is quasiprimitive or bi-quasiprimitive on \(V(\Gamma_N)\).

First assume that \(N = \langle a^i \rangle \)  where \(i\) is a divisor of \(4m\) and \(i \neq 1, 4m\). By Theorem \ref{(G/N, s)-distance-transitive}, one of the following four cases occurs for the quotient group \(R(T)/N\):
 \begin{itemize}
    \item [(1)]\( (R(T)/N \cong \mathbb{Z}_2 \times \mathbb{Z}_2 \), \( i = 2 \);
    \item [(2)]\( (R(T)/N \cong {D}_{2i} \), \( i \geq 3 \) and $i \mid 2m$;
    \item [(3)]\( (R(T)/N \cong \mathbb{Z}_4 \times \mathbb{Z}_2 \), \( i = 4 \) and $i \nmid 2m$;
    \item [(4)]\( (R(T)/N \cong SD_{2i} \), \( i \geq 3 \), \( i \neq 4 \), and $i \nmid 2m$.
\end{itemize}

If   Case $(1)$ holds, then  the quotient graph \(\Gamma_N\) has exactly four vertices, so it is isomorphic to either \(\mathrm{K}_4\) or \(C_4\),  contradicting that \(\Gamma_N\) is non-complete with valency at least 3.

We now turn to Case $(2)$. Then   \(\Gamma_N\) is a \((G/N,2)\)-distance-transitive Cayley graph over a dihedral group,
it is in particular   2-distance-transitive. By   \cite[Theorem 1.1]{HuangFengZhouYin2025},  \(\Gamma_N\) is   either \(2\)-arc-transitive or a complete multipartite graph of the form \(\mathrm{K}_{x'[y']}\).
Recall that \(\Gamma\) itself is not isomorphic to any complete multipartite graph; together with Lemma \ref{quot-notkmb-1}, this excludes the possibility \(\Gamma_N\cong\mathrm{K}_{x'[y']}\). Consequently, \(\Gamma_N\) must be \(2\)-arc-transitive, and thus falls into the classification of 2-arc-transitive dihedrants given in  Lemma \ref{Du2008}.
Since   \(G/N\) acts either quasiprimitively or bi-quasiprimitively on \(V(\Gamma_N)\), it follows that  \(\Gamma_N\) is isomorphic to one of the following graphs:

\begin{itemize}
\item [(a)] \( \K_{i,i} \);
\item [(b)] \( B(H_{11}) \) or \( B^{\prime}(H_{11}) \);
\item [(c)] \( B(\operatorname{PG}(d,q)) \) or \( B'(\operatorname{PG}(d,q)) \), where \( i = \frac{q^d - 1}{q - 1} \), \( d \geq 2 \), and \( q \) is a prime power.
\end{itemize}

As \(\Gamma_N\) is \((G/N, 2)\)-arc-transitive,? it follows from Lemma \ref{LiPan2008} that \(\Gamma\) is 2-arc-transitive.

If  \(\Gamma_N\cong \mathrm{K}_{i,i}\), then  by \cite[Lemma 4.3]{Jin2023},  \(i=4\) and \(\Gamma\cong X(2,2)\),   contradicting \(|V(\Gamma)|=8m\ge 24\). Hence case (a) is not possible.

By  \cite[Lemma 4.4]{Jin2023}, no \(2\)-arc-transitive bicirculant arises as a regular cyclic cover of either of \( B(H_{11})\) or \(B'(H_{11})\), so  case (b) cannot occur.

 According to \cite[Lemma 4.6(1)]{Jin2023}, \(B(\operatorname{PG}(d, q))\) admits no   2-arc-transitive regular cyclic bicirculant covers  for   \(d \geq 2\) and \(q\)  a prime power. Hence \(\Gamma_N\ncong  B(\operatorname{PG}(d, q))\).

Finally, suppose \(\Gamma_N\cong B'(\operatorname{PG}(d,q))\) with \(d\ge 2\), \(q\) a prime power and \(\frac{q^d-1}{q-1}=i\). Then by   \cite[Lemma 4.6(2)]{Jin2023}, \(\Gamma\) is isomorphic  to either \(X'(3,2)\) or  \(\Gamma(d,q,r)\) for some divisor \(r\mid q-1\). Since  \(|V(\Gamma)|=8m\neq |V(X'(3,2))|=28\), the case \(\Gamma\cong X'(3,2)\) is impossible. Therefore \(\Gamma\cong \Gamma(d,q,r)\), where \(d\ge2\), \(q\) is a prime power, \(r\mid q-1\), and the vertex count satisfies $|V(\Gamma(d,q,r))|=\frac{r}{q-1}\cdot |V(\Gamma(d,q))|=8m$.

We now proceed to Case $(3)$, in which \(T/N \cong \mathbb Z_4\times\mathbb Z_2\), \(i=4\) and \(m\) is odd. As \(\Gamma_N\) is a \((G/N,2)\)-distance-transitive Cayley graph of \(T/N\cong\mathbb Z_4\times\mathbb Z_2\), it follows that \(|V(\Gamma_N)|=8\).

Assume first that \(G/N\) is quasiprimitive on \(V(\Gamma_N)\). By Lemma \ref{B-group}, \(G/N\) is primitive on \(V(\Gamma_N)\). In view of Lemma \ref{thm:25.5}, the abelian group \(\mathbb{Z}_4\times\mathbb{Z}_2\) is a \(B\)-group, which implies that \(G/N\) acts \(2\)-transitively on \(V(\Gamma_N)\). Consequently, \(\Gamma_N\cong \mathrm{K}_8\). However, \(\Gamma_N\) is a noncomplete \((G/N,2)\)-distance-transitive graph, so the isomorphism \(\Gamma_N\cong \mathrm{K}_8\) is impossible.

Assume now that \(G/N\) is bi-quasiprimitive on \(V(\Gamma_N)\). By Lemma \ref{bi-quasiprimitive}, \(\Gamma_N\) is isomorphic to either \(\mathrm{K}_{4,4}\) or \(\mathrm{K}_{4,4}-4\mathrm{K}_2\). Since \(\Gamma_N\) is \((G/N,2)\)-arc-transitive, Lemma \ref{LiPan2008} implies that \(\Gamma\) is \((G,2)\)-arc-transitive. If \(\Gamma_N\cong \mathrm{K}_{4,4}\), then \cite[Lemma 4.3]{Jin2023} yields \(\Gamma\cong X(2,2)\), contradicting the fact that \(|V(\Gamma)|=8m\ge 24\). If alternatively \(\Gamma_N\cong \mathrm{K}_{4,4}-4\mathrm{K}_2\), then \(\Gamma\) is a \((G,2)\)-arc-transitive bicirculant that arises as a regular cyclic cover of \(\mathrm{K}_{4,4}-4\mathrm{K}_2\) with \(|N|=m\) and \(m\ge 3\). Applying \cite[Lemma 4.7]{Jin2023}, \(\Gamma\) is isomorphic to either \(\operatorname{GP}(12,5)\) or \(\operatorname{GP}(24,5)\). Since \(m\) is odd, \(\Gamma\) cannot be \(\operatorname{GP}(24,5)\). Furthermore, by \cite[Theorems 1--2, pp. 217--218]{FruchtGraverWatkins1971}, the automorphism group of \(\operatorname{GP}(12,5)\) is isomorphic to \(S_4\times S_3\). As this group contains no subgroup isomorphic to the semi-dihedral group \(T=SD_{8m}\), the graph \(\Gamma\) cannot be \(\operatorname{GP}(12,5)\) either.

Consider Case $(4)$, where \(T/N \cong SD_{2i}\) with \(i\ge 3\), \(i\neq 4\) and \(i\nmid 2m\). In particular, \(4\mid i\).

Assume first that \(G/N\) is quasiprimitive on \(V(\Gamma_N)\). Since \(\Gamma_N\) is a connected \((G/N,2)\)-distance-transitive Cayley graph of a semi-dihedral group, it is a connected \(G/N\)-arc-transitive bicirculant with respect to the cyclic subgroup of order \(i\). Invoking Lemma \ref{DevillersGiudiciJin2022}, \(\Gamma_N\) is isomorphic to the graph \(\mathrm{K}_{2i}\), \(\mathrm{K}_{i[2]}\), or \(\mathrm{K}_{i,i}-i\mathrm{K}_2\) where \(\operatorname{PGL}(d,q)\le G/N\le \operatorname{P\Gamma L}(d,q)\) with \(i=(q^d-1)/(q-1)\). Recall that \(\Gamma\) is not a complete multipartite graph; together with Lemma \ref{quot-notkmb-1}, this excludes the case \(\Gamma_N\cong \mathrm{K}_{i[2]}\). Moreover, as \(\Gamma_N\) is a noncomplete \((G/N,2)\)-distance-transitive graph, the complete graph \(\mathrm{K}_{2i}\) is also ruled out. Consequently, \(\Gamma_N\cong \mathrm{K}_{i,i}-i\mathrm{K}_2\), and there holds \(\operatorname{PGL}(d,q)\le G/N\le \operatorname{P\Gamma L}(d,q)\) with \(i=(q^d-1)/(q-1)\).
Recall that the semi-dihedral group admits the presentation \(SD_{2i}=\langle x,b\mid x^i=b^2=1, x^b=x^{\frac{i}{2}-1}\rangle\), and \(T/N \le SD_{2i}\le G/N\le \operatorname{P\Gamma L}(d,q)\). In particular, \(\langle x\rangle\le \operatorname{P\Gamma L}(d,q)\). Since \(\langle x\rangle\) is normal in \(SD_{2i}\), we obtain \(SD_{2i}\le N_{\operatorname{P\Gamma L}(d,q)}(\langle x\rangle)\). By \cite[p. 187, Satz 7.3]{Hup67}, the normalizer \(N_{\operatorname{P\Gamma L}(d,q)}(\langle x\rangle)\) takes the form \(\langle x\rangle:\langle \delta\rangle\), where \(\langle \delta\rangle\cong\mathbb{Z}_d\) and \(x^\delta=x^q\). As \(x^b=x^{\frac{i}{2}-1}\) and \(\langle x,b\rangle\le \langle x,\delta\rangle\), there exists an integer \(r\) with \(0\le r<d\) such that \(x^{\delta^r}=x^{q^r}=x^b=x^{\frac{i}{2}-1}\), which yields the congruence \(q^r\equiv \frac{i}{2}-1\pmod{i}\).
Furthermore, substituting \(i=(q^d-1)/(q-1)\) gives \(2q^r = i-2 = q^{d-1}+q^{d-2}+\cdots+q-1\). A straightforward inspection shows that this equation holds only if \(r=0\) and \(q=3\). On the other hand, since \(SD_{2i}\cap \operatorname{PGL}(d,q)\) is normal in \(SD_{2i}\), the quotient \(SD_{2i}/(SD_{2i}\cap \operatorname{PGL}(d,q))\) is isomorphic to a subgroup of \(\mathbb{Z}_f\), where \(q=p^f\) for some prime \(p\). Hence this quotient group is cyclic. Applying Lemma \ref{normal group} and Lemma \ref{T/N}, the order of this cyclic quotient is either \(2\) or \(4\), forcing \(f\) to be even. Since $q = p^f$ for some prime $p$ and $q = 3$, we have $f = 1$. This yields a final contradiction.

Now assume that \(G/N\) is bi-quasiprimitive on \(V(\Gamma_N)\). Since \(\Gamma\) is a cover of \(\Gamma_N\) and the quotient graph \(\Gamma_N\) is \((G/N,2)\)-distance-transitive, by Lemma \ref{bi-quasiprimitive}, \(\Gamma_N\) is \((G/N,2)\)-arc-transitive and is isomorphic to one of the following graphs: \(\mathrm{K}_{i,i}\), \(\mathrm{K}_{i,i}-i\mathrm{K}_2\), \(B(\mathrm{PG}(d-1,q))\), or \(B'(\mathrm{PG}(d-1,q))\), where \(d\ge 3\), \(q\) is a prime power, and \(i=(q^d-1)/(q-1)\). Furthermore, Lemma \ref{LiPan2008} implies that \(\Gamma_N\) is \((G,2)\)-arc-transitive.

If \(\Gamma_N\) is isomorphic to \(K_{i,i}\), then by \cite[Lemma 4.3]{Jin2023}, \(i = 4\) and \(\Gamma\) is the graph \(X(2, 2)\), which contradicts the fact that \(|V(\Gamma)| = 8m \geq 24\).

We now consider the case \(\Gamma_N \cong \mathrm{K}_{i,i} - i\mathrm{K}_2\), and let \(|N| = d = \frac{4m}{i}\). Since the covering relation preserves vertex valency and \(\Gamma\) has valency at least \(3\), the quotient graph \(\Gamma_N\) is also of valency at least \(3\), which implies \(i \ge 5\). Recall that \(T/N \cong SD_{2i}\), so \(4 \mid i\). Moreover, the divisibility conditions \(i \mid 4m\) and \(i \nmid 2m\) together force \(d = 4m/i\) to be odd. Applying \cite[Lemma 4.7]{Jin2023}, we conclude that \(\Gamma \cong K_{q+1}^{2d}\), where \(q\) is an odd prime power, \(d \ge 2\) divides \(q-1\), \(4m = d(q+1)\), and \(i = q+1 \ge 6\).

Moreover, by \cite[Lemma 4.6(1)]{Jin2023}, \(\Gamma_N\) cannot be isomorphic to \(B(\operatorname{PG}(d-1,q))\) for any integer \(d\ge 3\) and any prime power \(q\).

Finally, suppose \(\Gamma_N \cong B'(\operatorname{PG}(d-1,q))\) with \(d\ge 3\), \(q\) a prime power, and \(i = \frac{q^d-1}{q-1}\). According to \cite[Lemma 4.6(2)]{Jin2023}, \(\Gamma\) is isomorphic either to \(X'(3,2)\) or to \(\Gamma(d,q,r)\) for some divisor \(r \mid q-1\). As in the previous analysis, the vertex cardinality of \(X'(3,2)\) contradicts the vertex number condition of \(\Gamma\), so the case \(\Gamma \cong X'(3,2)\) is impossible. Consequently, we obtain \(\Gamma \cong \Gamma(d,q,r)\), where \(r \mid q-1\) with $|V(\Gamma(d, q, r))| = \frac{r}{q-1} \cdot |V(\Gamma(d, q))| = 8m$.

In the remainder of this proof, we assume that \(N \nleq \langle a \rangle\). By Theorem \ref{(G/N, s)-distance-transitive}, \(N\) is a proper subgroup of \(T\) and \(T/N \cong \mathbb{Z}_4\). It follows that \(\Gamma_N\) is a four-vertex graph, so \(\Gamma_N\) is isomorphic to either \(\mathrm{K}_4\) or \(C_4\). Since \(\Gamma\) is a metacyclic cover of  \(\Gamma_N\) and \(\Gamma\) has valency at least \(3\), the valency of \(\Gamma_N\) is also at least \(3\), which rules out the cycle graph \(C_4\). Furthermore, \(\Gamma_N\) is a noncomplete \((G/N,2)\)-distance-transitive graph, so the complete graph \(\mathrm{K}_4\) is also excluded. Thus this case is impossible.

This concludes the proof.
    \end{proof}
   \medskip

\begin{lemma}
    The graph $X_1(4,q)$ where \(q \equiv 3 \pmod{4}\) and \(q = 2m - 1\) is a  Cayley graph of the group $T= \mathrm{SD}_{8m} = \langle a, b \mid a^{4m} = b^2 = 1,\ a^b = a^{2m-1} \rangle,\ m \geq 3$.
\end{lemma}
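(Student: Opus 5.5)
The plan is to exhibit an explicit regular subgroup of $\operatorname{Aut}(X_1(4,q))$ isomorphic to $\mathrm{SD}_{8m}$, by lifting a suitable regular subgroup of $\operatorname{Aut}(\K_{q+1})$ along the covering $X_1(4,q)\to \K_{q+1}$ with covering group $K=\langle \rho\rangle\cong\mathbb{Z}_4$, where $\rho:(x,g)\mapsto (x,g+1)$. Note first that $|V(X_1(4,q))|=4(q+1)=8m$, and that $q\equiv 3\pmod 4$ with $q=2m-1$ forces $m$ to be even, so $2m=q+1$ and $8\mid 8m$.

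\emph{Step 1: lift the generators of $\operatorname{PSL}(2,q)$.} I would verify directly from the definition of $\psi$ that each of the following maps is an automorphism.
\begin{itemize}
\item[(i)] For every square $s\in S(q)$ and every $t\in F(q)$, the affine map $(x,g)\mapsto(sx+t,g)$, with $\infty$ fixed. It preserves $\psi$ because $s(y-x)$ and $y-x$ have the same quadratic character.
\item[(ii)] The map $\mu:(x,g)\mapsto(-x,-g)$. Here $-1\in N(q)$ since $q\equiv 3\pmod 4$, so negating $x$ swaps the voltages $1$ and $3$, and negating $g$ restores them.
\item[(iii)] A lift $\widetilde\iota$ of the inversion $\iota:x\mapsto -1/x$.
\end{itemize}
For (iii) one has $(-1/y)-(-1/x)=(y-x)/(xy)$, so the voltage on an edge $\{x,y\}$ is multiplied by the character of $xy$. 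Hence the lift must be twisted by a vertex-dependent shift, of the form $(x,g)\mapsto(-1/x,\,\varepsilon(x)g+c(x))$ with $\varepsilon(x)=\pm1$ and $c(x)\in\mathbb{Z}_4$ depending on whether $x$ is $0$, $\infty$, a square or a non-square. I would solve for $\varepsilon$ and $c$ by checking the edges through $0$ and $\infty$ separately.

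This yields a lifted group $\widetilde L=K.\operatorname{PSL}(2,q)$, extended by $\mu$.

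\emph{Step 2: choose the regular subgroup downstairs.} Since $q\equiv 3\pmod 4$, $\operatorname{PSL}(2,q)$ contains a dihedral group $D_{q+1}$ that is regular on $\operatorname{PG}(1,q)$ (as used earlier via \cite{SongLiZhang2014}); alternatively one can use the corresponding subgroup of $\operatorname{PGL}(2,q)$ realised with the help of $\mu$. Its preimage $\widetilde D$ in $\widetilde L$ has order $4(q+1)=8m$. It is transitive on the vertices, because $K$ is transitive on each fibre and $D_{q+1}$ is transitive on the fibres. By order it is therefore regular.

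\emph{Step 3: identify $\widetilde D$.} Take $c$ to be a generator of the cyclic subgroup $\mathbb{Z}_{m}$ of $D_{q+1}$, which is a product of a few affine maps and $\iota$. Using the formulas from Step 1, I would compute a lift $a$ of $c$ and show that $a^m$ is a generator of $K$. This gives $a$ of order $4m$, semiregular with two orbits. Then I would pick an involution $j$ of $D_{q+1}$ and a lift $b$ of $j$ (adjusted by an element of $K$, or by using $\mu$) with $b^2=1$, and verify $a^b=a^{2m-1}=a^{-1}a^{2m}$, that is, $bab^{-1}=a^{-1}\rho^{2}$. The key point is that conjugating by the lift of an involution inverts $c$ downstairs but introduces the central twist $\rho^2$. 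This is precisely what distinguishes $\mathrm{SD}_{8m}$ from $D_{8m}$ and from $Q_{8m}$. With this relation, $\widetilde D=\langle a,b\rangle\cong\mathrm{SD}_{8m}$ acts regularly, so $X_1(4,q)$ is a Cayley graph on $T$ by \cite[Lemma~16.3]{Biggs1993}.

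\emph{Main obstacle.} The hard part is Step 3, namely the bookkeeping of the twisted lift $\widetilde\iota$. Determining $c(x)$ and $\varepsilon(x)$ correctly, and then showing that the preimage of $\langle c\rangle$ is cyclic rather than $\mathbb{Z}_m\times\mathbb{Z}_4$, and that the involution lifts with the twist $\rho^2$ rather than trivially, requires a careful case analysis on the quadratic characters of the points in an orbit of $c$. If this proves unwieldy, my first fallback is to choose $b=\mu$ composed with a suitable affine lift, since its action on $K$ (inversion $g\mapsto -g$) is explicit. In that version, the relation would be checked by evaluating $b a b$ and $a^{2m-1}$ on the two fibres over $\infty$ and $0$, and then extending by the regularity of $\langle a\rangle$ on each of its two orbits.
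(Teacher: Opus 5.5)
There is a genuine gap. Your Step 2 does give a regular subgroup $\widetilde D$, but it is never isomorphic to $\mathrm{SD}_{8m}$, so no bookkeeping in Step 3 can succeed.

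Since $m$ is even, $Z(\mathrm{SD}_{8m})=\langle a^{2m}\rangle$ has order $2$: $a^k$ is central iff $2m\mid k(m-1)$, and $m-1$ is prime to $2m$. On the other hand, every lift of an element of $\operatorname{PSL}(2,q)$ centralizes $K$. This is clear for (i). For $\iota$, the edge conditions force $\varepsilon=1$, since with $\varepsilon=-1$ any three squares already give an inconsistency; one gets $\widetilde{\iota}(x,g)=(-1/x,\,g-\chi(x))$ with $\chi(0)=\chi(\infty)=0$. More conceptually, the conjugation action of $K.\operatorname{PSL}(2,q)$ on $K$ factors through the simple group $\operatorname{PSL}(2,q)$. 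Hence $K\cong\mathbb{Z}_4$ is central in $\widetilde D$, so $\widetilde D\not\cong\mathrm{SD}_{8m}$. Concretely, your target relation $bab^{-1}=a^{-1}\rho^{2}$ with $K=\langle a^m\rangle$ gives $ba^mb^{-1}=a^{-m}$. So $b$ would invert $K$, which no lift of an element of $D_{q+1}\le \operatorname{PSL}(2,q)$ does.

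The $\operatorname{PGL}(2,q)$ alternative and the $\mu$-fallback fail for the same reason. Inside the lift of $\operatorname{PGL}(2,q)$ generated by (i)--(iii), a regular $\mathrm{SD}_{8m}$ cannot contain $K$ at all. Suppose it did. Then $K=\langle a^m\rangle$ and $T/K\cong D_{2m}$ would be regular on $\operatorname{PG}(1,q)$, so its reflections would be fixed-point-free involutions. For $q\equiv 3\pmod 4$ these lie in $\operatorname{PSL}(2,q)$. Indeed, the involutions of $\operatorname{PGL}(2,q)\setminus\operatorname{PSL}(2,q)$ fix two points; an involutory $\mu$ times an affine lift lies over $x\mapsto -x-t$, which fixes $\infty$ and $-t/2$. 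So the elements $a^ib$ would centralize $K$, contradicting $(a^m)^b=a^{-m}$.

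The missing idea is that the regular copy of $\mathrm{SD}_{8m}$ meets $K$ only in $\langle\rho^2\rangle$. It projects onto the \emph{non-regular} group $N_{\operatorname{PGL}(2,q)}(\langle\sigma\rangle)\cong D_{4m}$, where $\sigma$ is a Singer cycle of order $q+1=2m$. Since $\sigma\notin\operatorname{PSL}(2,q)$, its lifts invert $K$. This is the paper's route.

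First take $a$ a lift of $\sigma$ and show $a^{2m}=\rho^2$, so that $a$ has order $4m$ and two orbits. One clean way: $a^{2m}=\rho^{\psi(C)}$ for the Hamiltonian cycle $C=(x,\sigma x,\sigma^2x,\dots)$ of $\K_{q+1}$, and a quadratic-character count in $\mathrm{GF}(q^2)$ gives $\psi(C)=2$.

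Then take $b$ a lift of a reflection $\tau$ with $\tau\sigma\tau=\sigma^{-1}$. If $\tau$ has two fixed points, all its lifts are involutions. Replacing $b,a$ by $b\rho^k,a\rho^j$, you can make $b$ fixed-point-free and get $bab=a^{-1}\rho^2=a^{2m-1}$. Then $b$ swaps the two $\langle a\rangle$-orbits and $\langle a,b\rangle$ is regular.

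For example, take $q=7$, $\sigma(x)=4/(x+1)$, $\tau(x)=-x-1$. The lift $a(x,g)=(\sigma x,-g+c(x))$ normalized by $c(\infty)=0$ and $b(x,g)=(-x-1,-g-1)$ satisfy $a^8=\rho^2$ and $bab=a^7$. This gives a regular $\mathrm{SD}_{32}$.
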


\begin{proof}
By their definitions, it may be easily seen that $K_{q+1}^{4}$ is isomorphic to $X_{1}(4,q)$ when $q \equiv 3 \pmod{4}$.
Let $K$ denote the covering transformation group of $K_{q+1}^4$, regarded as a $4$-fold regular cover of the base graph $K_{q+1}$.
According to \cite[Theorem~5.3]{Marusic2003}, $K$ is normal in $\operatorname{Aut} K_{q+1}^4$ and
\[
\operatorname{Aut} K_{q+1}^4 / K \cong \operatorname{P\Gamma L}(2,q),\qquad K \cong \mathbb{Z}_4.
\]

Recall that $K_{q+1}^4$ is a regular $\mathbb{Z}_2$-cover of the graph $Y \cong K_{q+1,q+1} - (q+1)K_2$,
whose vertex set admits a bipartition $V(Y) = V \cup V'$, with $V$ and $V'$ each isomorphic to the projective line $\operatorname{PG}(1,q)$.
Let $\sigma \in \operatorname{P\Gamma L}(2,q)$ be a Singer cycle of order $q+1$.
Its natural action on $Y$ stabilizes both $V$ and $V'$, each forming a single orbit of size $q+1$.
Now take a lift $\tilde{\sigma}$ of $\sigma$ to the covering graph $K_{q+1}^4$.
The index-$2$ subgroup of the full automorphism group of $K_{q+1}^4$ that preserves the bipartition (and does not interchange its two vertex parts) contains a lift of $\operatorname{P\Gamma L}(2,q)$, and consequently a lift of the simple group $\operatorname{PSL}(2,q)$.
As $\operatorname{PSL}(2,q)$ acts faithfully on each bipartite component, its lifted counterpart also acts faithfully on each part.
This restricts the orbit structure of $\tilde{\sigma}$ on $V(K_{q+1}^4)$ to exactly two cases.
Hence a lift $\tilde{\sigma}$ of $\sigma$ has either two orbits of length $2(q+1)$ or four orbits of length $q+1$.
If a lift $\tilde{\sigma}$ of $\sigma$ had four orbits of length $q+1$, according to \cite[Theorem~5.3]{Marusic2003}, this is impossible.

We next consider the graph having an automorphism with two orbits of equal lengths $2(q+1)$, then $\langle \sigma \rangle$ lifts to a cyclic group $\langle \tilde{\sigma} \rangle$.
Since $\langle {\sigma} \rangle \cong \mathbb{Z}_{q+1} = \mathbb{Z}_{2m}$, it follows that $\langle \tilde{\sigma} \rangle \cong \mathbb{Z}_{2q+2} = \mathbb{Z}_{4m}$.
Therefore, the group $\operatorname{Aut} K_{q+1}^4$ is obtained via a non-split extension of $K$ by $\operatorname{P\Gamma L}(2,q)$.
Let $ \tilde{x}, \tilde{y}$ be two vertices in $V(K_{q+1}^4)$.
Then we have $\tilde{x} \cdot \tilde{y} = \widetilde{xy} \, f(x,y)$, for a binary function $f\colon \operatorname{P\Gamma L}(2,q)\times \operatorname{P\Gamma L}(2,q)\to K$.
Hence it is easy to see that $(\tilde{\sigma}^2) = \widetilde{(\sigma)^2} f(\sigma, \sigma)$.
Then
\[
\tilde{\sigma}^{2m} = \widetilde{\sigma^{2m}} \prod_{i=1}^{2m-1} f(\sigma^i, \sigma) = \tilde{1}\prod_{i=1}^{2m-1} f(\sigma^i, \sigma) = z .
\]
Thus $z$ is an element of order $2$ in $K$.

By \cite[p.~187, Satz~7.3]{Hup67}, the normalizer $N_{\operatorname{P\Gamma L}(2,q)}(\langle \sigma\rangle)$ takes the form $\langle \sigma\rangle: \mathbb{Z}_2$.
Thus there exists an involution $\tau \in \operatorname{P\Gamma L}(2,q)$ such that $\tau \sigma \tau = \sigma^{q} = \sigma^{2m-1} =\sigma^{-1}$.
Assume that $\tau$ has a lift $\tilde{\tau}$ in $\operatorname{Aut} K_{q+1}^4$.
We may choose the representative $\tilde{\tau}$ such that
\[
f(\tau,\tau)=1,\quad f(\tau,\sigma)=1,\quad f(\tau\sigma,\tau\sigma)=z .
\]
Since $f(\tau, \tau) = 1$, we have $(\tilde{\tau})^2 = \widetilde{\tau^2}f(\tau, \tau) = \tilde{1}$.
In addition, due to $f(\tau, \sigma) = 1$ and $f(\tau \sigma, \tau \sigma) = z$, it follows that
\[
\tilde{\tau} \tilde{\sigma} \tilde{\tau} \tilde{\sigma} = (\tilde{\tau} \tilde{\sigma})^2 = (\widetilde{\tau\sigma} f(\tau, \sigma))^2 = (\widetilde{\tau\sigma})^2
= (\widetilde{\tau\sigma\tau\sigma}) (f(\tau \sigma, \tau \sigma)) = \tilde{1} \, z = z .
\]
Then we have
\[
\tilde{\tau} \tilde{\sigma} \tilde{\tau} = z \tilde{\sigma}^{-1} = \tilde{\sigma}^{2m}\tilde{\sigma}^{-1} = \tilde{\sigma}^{2m-1}.
\]
Let $a = \tilde{\sigma}$ and $b = \tilde{\tau}$. Then
\[
T = \mathrm{SD}_{8m} = \langle a, b \mid a^{4m} = b^2 = 1,\ a^b = a^{2m-1} \rangle .
\]

It remains to prove that $T$ acts transitively (and hence regularly) on $V(X_1(4,q))$, which has $8m$ vertices.
From the orbit analysis we know that $a$ has exactly two orbits of length $4m$; denote them by $O_1$ and $O_2$.
The element $z = a^{2m}$ is the unique central involution in $\langle a \rangle$; it has no fixed vertices and must interchange $O_1$ and $O_2$, because $a$ acts as a $4m$-cycle on each orbit and its $2m$-th power sends every vertex to the other vertex in the same fibre of the $\mathbb{Z}_2$-cover.
Using the relation $a^b = a^{2m-1} = a^{-1}z$, we obtain
\[
b^{-1} a b = a^{-1} z. \tag{1}
\]
Suppose, for contradiction, that $b$ preserves the orbits, i.e., $O_1^b = O_1$ and $O_2^b = O_2$.
Take an arbitrary vertex $x \in O_1$. Applying (1) we get
\[
x^{b^{-1} a b} = x^{a^{-1} z}.
\]
The left-hand side: $x^{b^{-1}} \in O_1$, then $x^{b^{-1} a} \in O_1$, finally $x^{b^{-1} a b} \in O_1$, so it belongs to $O_1$.
The right-hand side: $x^{a^{-1}} \in O_1$, and then multiplication by $z$ sends it into $O_2$ (since $z$ swaps the two orbits). Thus the right-hand side lies in $O_2$.
This contradicts the fact that $O_1 \cap O_2 = \varnothing$. Therefore $b$ must interchange $O_1$ and $O_2$.
Since $\langle a \rangle$ is transitive on each of $O_1$ and $O_2$, the group $\langle a, b \rangle = T$ is transitive on the whole vertex set $O_1 \cup O_2$.
Because $|T| = 8m$ equals the number of vertices, the action is regular.
By Sabidussi's theorem, the existence of a regular subgroup $T \le \operatorname{Aut}(X_1(4,q))$ implies that $X_1(4,q)$ is a Cayley graph of $T$.
\end{proof}



\end{document}